\tikzset{
    >=stealth,
    every picture/.style={thick},
    graphs/every graph/.style={empty nodes},
}
\tikzstyle{vertex}=[
\tikzstyle{printersafe}=[decoration={snake,amplitude=0pt}]
\newcommand{\supp}{\operatorname{supp}}
\newcommand{\pp}{\mathbb{P}}
\newcommand{\qq}{\mathbb{Q}}
\newcommand{\nn}{\mathbb{N}}
\newcommand{\rr}{\mathbb{R}}
\newcommand{\kk}{\mathbb{K}}
\def\O#1.{\mathcal {O}_{#1}}			
\def\pr #1.{\mathbb P^{#1}}				
\def\af #1.{\mathbb A^{#1}}			
\def\ses#1.#2.#3.{0\to #1\to #2\to #3 \to 0}	
\def\xrar#1.{\xrightarrow{#1}}			
\def\K#1.{K_{#1}}						
\def\bA#1.{\mathbf{A}_{#1}}			
\def\bM#1.{\mathbf{M}_{#1}}				
\def\bL#1.{\mathbf{L}_{#1}}				
\def\bB#1.{\mathbf{B}_{#1}}				
\def\bK#1.{\mathbf{K}_{#1}}			
\def\subs#1.{_{#1}}					
\def\sups#1.{^{#1}}
  \newtheorem{theorem}{Theorem}[section]
  \newtheorem{lemma}[theorem]{Lemma}
  \newtheorem{proposition}[theorem]{Proposition}
  \newtheorem{conjecture}[theorem]{Conjecture}
  \newtheorem{definition}[theorem]{Definition}
  \newtheorem{example}[theorem]{Example}
  \newtheorem{question}[theorem]{Question}
\theoremstyle{remark}
\numberwithin{equation}{section}
\begin{document}

\title[Log Calabi--Yau pairs of birational complexity zero]{Log Calabi--Yau pairs of birational complexity zero}

\author[J. Enwright]{Joshua Enwright}
\address{UCLA Mathematics Department, Box 951555, Los Angeles, CA 90095-1555, USA
}
\email{jlenwright1@math.ucla.edu}

\author[F.~Figueroa]{Fernando Figueroa}
\address{Department of Mathematics, Princeton University, Fine Hall, Washington Road, Princeton, NJ 08544-1000, USA
}
\email{fzamora@princeton.edu}

\author[J.~Moraga]{Joaqu\'in Moraga}
\address{UCLA Mathematics Department, Box 951555, Los Angeles, CA 90095-1555, USA
}
\email{jmoraga@math.ucla.edu}

\subjclass[2020]{Primary 14E05, 14J32;
Secondary 14E30.}

\begin{abstract}
In this article, we study the geometry of log Calabi--Yau pairs
$(X,B)$ of index one and birational complexity zero. 
Firstly, we propose a conjecture that characterizes such pairs $(X,B)$
in terms of their dual complex
and the rationality of their log canonical places.
Secondly, we show that for these pairs the open set $X\setminus B$ is divisorially covered by open affine subvarieties which are isomorphic to open subvarieties of algebraic tori.
We introduce and study invariants that measure the geometry and the number of these open subvarieties of algebraic tori.
Thirdly, we study boundedness properties of log Calabi--Yau pairs of index one and birational complexity zero. For instance, 
in dimension $2$ we prove that such pairs are affinely bounded.
\end{abstract}

\maketitle

\setcounter{tocdepth}{1} 
\tableofcontents

\section{Introduction}

Calabi--Yau varieties are one of the three building blocks of smooth projective varieties. 
Log Calabi--Yau pairs $(X,B)$ are geometric objects that appear as degenerations of Calabi--Yau varieties.
The {\em complexity} of a log Calabi--Yau pair $(X,B)$ is defined to be
\[
c(X,B):= \dim X + \dim {\rm Cl}_\qq(X) -|B|,
\]
where $|B|$ is the sum of the coefficients of $B$.
The complexity of a log Calabi--Yau pair is non-negative and it determines whether the log Calabi--Yau pair is toric (see ~\cite[Theorem 1.2]{BMSZ18}).
The {\em birational complexity}, denoted by $c_{\rm bir}(X,B)$, of a log Calabi--Yau pair $(X,B)$, is the infimum among the complexities of birational models of $(X,B)$.
The birational complexity has recently been introduced by Mauri and the third author 
as a proxy to study the dual complex of log Calabi--Yau pairs~\cite{MM24}.
Log Calabi--Yau pairs of index one\footnote{The {\em index} of a log Calabi--Yau pair $(X,\Delta)$ is the smallest positive integer $m$ for which $m(K_X+B)\sim 0$.} and birational complexity zero are precisely those that admit birational toric models~\cite[Theorem 1.6]{MM24}.
In this article, we study the geometry of log Calabi--Yau pairs of index one and birational complexity zero.

\subsection{Birational complexity zero}
In this subsection, we discuss a fundamental conjecture and a structural theorem for log Calabi--Yau pairs of birational complexity zero.

In what follows, we write $\Sigma^n$ for the sum of the hyperplane coordinates of the $n$-dimensional projective space $\mathbb{P}^n$. 
Thus, the log pair $(\pp^n,\Sigma^n)$ is a log Calabi--Yau pair of index one.
We propose the following conjecture that characterizes log Calabi--Yau pairs
admitting toric models, i.e., 
log Calabi--Yau pairs of birational complexity zero and index one.

\begin{conjecture}\label{conj:log-rationality}
Let $(X,B)$ be a log Calabi--Yau pair of dimension $n$.
Assume that the following conditions are satisfied:
\begin{enumerate}
\item we have $\mathcal{D}(X,B)\simeq_{\rm PL} S^{n-1}$; and 
\item each dlt stratum of $(X,B)$ is rational.
\end{enumerate} 
Then, we have $c_{\rm bir}(X,B)=0$.
In particular, we have a crepant
birational map 
$(\pp^n,\Sigma^n)\dashrightarrow (X,B)$.
\end{conjecture}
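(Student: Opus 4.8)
The plan is to argue by induction on the dimension $n$, using adjunction to descend to boundary components and the crepant birational invariance of the dual complex to control the combinatorics. By \cite[Theorem 1.6]{MM24} it suffices to produce a crepant birational map $(\pp^n,\Sigma^n)\drar (X,B)$. Since both the PL-homeomorphism type of $\mathcal{D}(X,B)$ and the rationality of the dlt strata are preserved under crepant birational modifications, I would first replace $(X,B)$ by a $\qq$-factorial dlt modification, so that condition (1) becomes a statement about the combinatorial intersection pattern of the prime components $S_0,\dots,S_r$ of $\lfloor B\rfloor$, and condition (2) says that each of these components, and each of their successive intersections, is a rational variety.

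For the base case $n=1$, condition (1) gives $\mathcal{D}(X,B)\simeq_{\rm PL}S^0$, so $\lfloor B\rfloor$ is a sum of two points; by condition (2) these are rational points, and since $K_X+B\sim 0$ the curve $X$ is rational, forcing $(X,B)\cong(\pp^1,\Sigma^1)$ and hence $c_{\rm bir}(X,B)=0$.

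For the inductive step I would fix a vertex of $\mathcal{D}(X,B)$, corresponding to a component $S=S_0$, and invoke adjunction $K_S+B_S=(K_X+B)|_S\sim 0$. The pair $(S,B_S)$ is then log Calabi--Yau of index one and dimension $n-1$; its dual complex is the link of the chosen vertex inside $\mathcal{D}(X,B)\simeq_{\rm PL}S^{n-1}$, hence PL-homeomorphic to $S^{n-2}$, and its dlt strata are dlt strata of $(X,B)$, hence rational. The inductive hypothesis then yields a crepant birational map $(\pp^{n-1},\Sigma^{n-1})\drar(S,B_S)$, so the boundary component $S$ already carries a birational toric structure.

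The \textbf{main obstacle} is the lifting step: upgrading the birational toric structure on the divisor $S$ to one on the ambient pair $(X,B)$. Here the spherical hypothesis is essential, since because $\mathcal{D}(X,B)$ is all of $S^{n-1}$ rather than a contractible piece, the complement of the star of the vertex attached to $S$ supplies the remaining combinatorial data needed to build the extra torus factor, and one expects $X\setminus S$ to fiber over $S$ in a way that assembles a dense algebraic torus $\G_m^n\hookrightarrow X\setminus B$. Realizing this geometrically is delicate, because the rationality of the strata controls the interior of the pair while the sphere condition controls only the boundary $B$, and welding them into a single global torus is precisely the content of the conjecture. A natural refinement of the strategy is to show that the two hypotheses together permit crepant birational modifications that strictly decrease the complexity $c(X,B)$ until it reaches zero; the difficulty is that not every combinatorial simplification of a triangulated sphere is induced by an honest crepant birational map, so each such move must be realized geometrically using the rationality of the corresponding stratum.
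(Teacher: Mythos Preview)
The statement you are attempting to prove is presented in the paper as an \emph{open conjecture}, not a theorem; the paper gives no proof. The authors remark only that it is known in dimension~$2$ by Gross--Hacking--Keel and for $X\simeq\pp^3$ by Ducat, and they exhibit the Kaloghiros example to show that hypothesis~(2) cannot be dropped. So there is no ``paper's own proof'' to compare against.

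Your proposal is not a proof but an outline with a self-identified gap, and that gap is the entire content of the conjecture. The inductive reduction to $(S,B_S)$ is formally correct: adjunction gives an index-one log Calabi--Yau pair on $S$, its dual complex is the link of the vertex and hence a PL $(n-2)$-sphere, and its dlt strata are among those of $(X,B)$. But knowing $c_{\rm bir}(S,B_S)=0$ gives essentially no leverage on $c_{\rm bir}(X,B)$. There is no mechanism in your write-up, or in the literature, that produces a crepant birational map $(\pp^n,\Sigma^n)\drar(X,B)$ from one on a boundary divisor; your suggestion that ``$X\setminus S$ should fiber over $S$'' is not something one can arrange in general, and the alternative suggestion of running complexity-decreasing crepant modifications is exactly the problem that \cite{MM24} leaves open in this generality. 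The Kaloghiros example makes this vivid: there the same inductive hypothesis on the boundary is satisfied (the $2$-dimensional strata are rational surfaces with anticanonical cycles, so $c_{\rm bir}=0$ on them by GHK), yet the ambient threefold is birationally superrigid and the conclusion fails. Thus the lifting step must use the rationality of $X$ itself in a way your outline does not touch.

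In short: the base case and the descent to $S$ are fine, but the ``main obstacle'' you flag is not a technical wrinkle to be smoothed over---it is the conjecture.
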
 

In the previous conjecture, the dlt strata of $(X,B)$ are the strata of a dlt modification of the pair (see Definition~\ref{def:dlt-stratum}).
Let us emphasize that due to ~\cite[Corollary 4.(i)]{FMM22} and ~\cite[Proposition 3]{KX16}, Conjecture~\ref{conj:log-rationality}.(1) implies that $K_X+B\sim 0$. This explains the last line in the statement of the conjecture.
Conjecture~\ref{conj:log-rationality} is known in dimension $2$ by the work of Gross, Hacking, and Keel~\cite[Proposition 1.3]{GHK15}.
Furthermore, Conjecture~\ref{conj:log-rationality} is known for $X\simeq \pp^3$ by the work of Ducat~\cite[Theorem 1.2]{Duc22}.
It is worth mentioning that the previous conjecture fails if we drop any of its hypotheses.
In~\cite[\S 3]{Kal20}, Kaloghiros gives an example of a log Calabi--Yau pair $(X,B)$ with $\mathcal{D}(X,B)\simeq_{\rm PL} S^2$ and 
$X$ is birationally superrigid.
In particular, the pair $(X,B)$ does not have a toric model, so its birational complexity is positive.
In~\cite[Theorem 1.18]{MM24}, the authors give examples, in each dimension $n\geq 3$, of log Calabi--Yau pairs $(X_n,B_n)$
of dimension $n$, birational complexity $n$,
rational dlt strata, and 
dual complex $\mathcal{D}(X_n,B_n)\simeq_{\rm PL} \mathbb{P}^{n-1}_\rr$.

We turn to state a structural theorem for log Calabi--Yau pairs of
index one and birational complexity zero.
Toric varieties are covered by algebraic tori with monomial gluing functions. Cluster varieties can be regarded as generalizations of toric varieties. 
A cluster variety admits a big open subset
that is covered by algebraic tori. 
In the case of cluster varieties, the gluing functions are not necessarily monomial, however, they preserve the volume function of the tori.
The following theorem 
is a generalization of the previous facts 
to the case of log Calabi--Yau pairs
of birational complexity zero.
As it is expected, in this broader setting, one needs to consider more general affine varieties to perform these coverings.

\begin{theorem}\label{introthm:covering-bcomp0}
Let $(X,B)$ be a log Calabi--Yau pair of 
index one and birational complexity zero.
Then, there is a finite set
$\{U_i\}_{i\in I}$ of open affine subvarieties of $X\setminus B$ 
satisfying the two following conditions:
\begin{enumerate}
\item the set $\{U_i\}_{i\in I}$ divisorially covers $X\setminus B$;\footnote{We say that an algebraic variety $V$ is {\em divisorially covered} by the open sets $U_1,\dots,U_k$ if ${\rm codim}_{V}\left(V\setminus \bigcup_{i=1}^k U_i\right)\geq 2$. 
} and
\item each $U_i$ is isomorphic to an open subset of $\mathbb{G}_m^n$.
\end{enumerate} 
\end{theorem}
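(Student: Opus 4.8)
The plan is to exploit the defining property of birational complexity zero: by \cite[Theorem 1.6]{MM24}, the pair $(X,B)$ admits a crepant birational map $\phi\colon(\pp^n,\Sigma^n)\dashrightarrow(X,B)$ from the toric log Calabi--Yau pair. The open torus $\TT=\pp^n\setminus\Sigma^n\simeq\G_m^n$ maps birationally to $X\setminus B$, and since $\phi$ is crepant it preserves the log canonical structure, so the complement of $B$ is where we expect to find tori. The first step is to resolve the birational map by a common log resolution $(W,B_W)$ sitting over both $\pp^n$ and $X$, crepant to both, with morphisms $p\colon W\to\pp^n$ and $q\colon W\to X$. The reduced boundary $B_W$ is the strict transform of $\Sigma^n$ together with the $p$-exceptional divisors, and because of crepancy the same divisor $B_W$ is the support over $X$ of the preimage of $B$ plus $q$-exceptional divisors.

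\medskip

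Next I would produce the candidate affine charts. On $W$, the complement $W\setminus B_W$ is a common open set isomorphic (via $p$) to an open subset of $\TT\simeq\G_m^n$ and mapping (via $q$) into $X\setminus B$. The issue is that $q$ need not be an isomorphism onto its image, and a single such open set will generally miss a divisor of $X\setminus B$: precisely those prime divisors of $X\setminus B$ that are $q$-exceptional images, i.e. centers of divisors of $W$ contracted by $q$. To cover these, I would run the construction relative to each such divisor. The key idea is that for each prime divisor $D\subset X\setminus B$ whose generic point is not in the image of $W\setminus B_W$, there is an alternative toric model — obtained by performing a crepant birational modification of $(\pp^n,\Sigma^n)$, for instance a toric blowup followed by a different sequence of crepant maps — in which $D$ becomes visible inside the image of an open torus. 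This is where the theory of crepant birational maps between log Calabi--Yau pairs and the flexibility of toric models enters: one uses that any divisorial valuation with log discrepancy zero with respect to $(X,B)$ corresponds, under the crepant identification, to a torus-invariant valuation on some toric model, hence can be extracted torically.

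\medskip

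Concretely, the set $I$ indexes a finite collection of crepant birational toric models $\phi_i\colon(\pp^n,\Sigma^n)\dashrightarrow(X,B)$ (equivalently, distinct maximal open tori embedded birationally in $X\setminus B$), chosen so that every prime divisor of $X\setminus B$ is dominated by the open torus of at least one $\phi_i$. For each $i$, set $U_i\subseteq X\setminus B$ to be the largest open subset over which $\phi_i^{-1}$ is an isomorphism onto an open subvariety of $\G_m^n$; this is open, affine after possibly shrinking, and isomorphic to an open subset of $\G_m^n$, giving condition (2). Finiteness of $I$ follows because $X\setminus B$ has finitely many prime divisors to account for, and each is handled by finitely many models. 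For condition (1), I would argue that $\left(X\setminus B\right)\setminus\bigcup_{i\in I}U_i$ contains no divisor: any prime divisor of the complement would be a prime divisor of $X\setminus B$ not dominated by any of the chosen tori, contradicting the selection of $I$, so the complement has codimension at least $2$.

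\medskip

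The main obstacle I anticipate is the divisorial covering step, namely producing, for an arbitrary prime divisor $D\subset X\setminus B$, a crepant toric model whose open torus dominates $D$. The worry is whether every prime divisor of $X\setminus B$ arises as (the strict transform of) a torus-invariant divisor on \emph{some} toric model, and whether finitely many models suffice. I expect this to hinge on the fact that $X\setminus B$ is covered up to codimension two by the images of the open tori of crepant toric models, which in turn should follow from analyzing the indeterminacy locus of a fixed $\phi$ and showing that the divisors it fails to reach can be recovered by composing with toric flops or toric blowups — operations that stay within the class of crepant birational models and are controlled torically. Verifying affineness of the resulting $U_i$ (as opposed to mere quasi-affineness) and the finiteness count are secondary technical points that I would expect to resolve by standard arguments once the covering is in place.
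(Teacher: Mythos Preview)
Your overall strategy matches the paper's: start with a crepant map $\phi\colon(\pp^n,\Sigma^n)\dashrightarrow(X,B)$, take $U_1=\phi(\mathbb{G}_m^n\setminus{\rm Ex}(\phi))$, and then for each prime divisor $E$ of $(X\setminus B)\setminus U_1$ produce a new toric model on which $E$ is a divisor meeting the torus. But the mechanism you propose for this last step does not work, and the paper's actual argument is substantially different.

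The divisors $E\subset X\setminus B$ you need to reach have coefficient $0$ in $B$, hence log discrepancy $a_E(X,B)=1$; by crepancy $a_E(\pp^n,\Sigma^n)=1$ as well. These are \emph{canonical} places of the toric pair, not log canonical places. Your proposal invokes ``that any divisorial valuation with log discrepancy zero \ldots\ corresponds \ldots\ to a torus-invariant valuation on some toric model, hence can be extracted torically,'' and later asks ``whether every prime divisor of $X\setminus B$ arises as (the strict transform of) a torus-invariant divisor on some toric model.'' Both are aimed at the wrong target: a divisor with $a_E=1$ is never torus-invariant on any toric model crepant to $(X,B)$, and toric blowups and toric flops only move valuations with $a_E=0$. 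Composing $\phi$ with purely toric modifications therefore cannot make such an $E$ appear as a divisor; the birational map $\mathbb{G}_m^n\dashrightarrow X\setminus B$ is unchanged by them.

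The paper supplies the missing step via a genuinely non-toric construction (Lemma~\ref{lem:special-model-complexity-1}). Given the toric pair and the canonical valuation $E$, one first passes to a toric resolution on which the center of $E$ lies on a single boundary component that is a section of a toric fibration $p\colon Y\to Z$. One then extracts $E$ (raising the complexity from $0$ to $1$) and observes that the fiber through $E_Y$ contains another component $F_Y$, distinct from $E_Y$ and not in the boundary. Running a $(K_Y+B_Y+\epsilon F_Y)$-MMP over $Z$ contracts $F_Y$; the complexity drops back to $0$, so the resulting pair is toric, and now $E$ is a divisor intersecting the open torus. This ``swap'' of the canonical place $F_Y$ for the canonical place $E$ is the idea absent from your sketch.
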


The open sets in Theorem~\ref{introthm:covering-bcomp0} appear as $\phi(\mathbb{G}_m^n\setminus {\rm Ex}(\phi))$ where
$\phi\colon (\pp^n,\Sigma^n)\dashrightarrow (X,B)$ is a crepant birational map.
We say that a finite set $\{\phi_i\}_{i \in I}$ of crepant birational maps
$\phi_i\colon (\pp^n,\Sigma^n)\dashrightarrow (X,B)$
{\em divisorially covers} $(X,B)$
if the open affines $U_i:=\phi_i(\mathbb{G}_m^n\setminus {\rm Ex}(\phi_i))$ divisorially cover $X\setminus B$ (see Definition~\ref{def:div-cover-logCY}).

Theorem~\ref{introthm:covering-bcomp0} states that
whenever $c_{\rm bir}(X,B)=0$ the variety
$X\setminus B$ is divisorially covered by
open subsets $U\subseteq \mathbb{G}_m^n$. 
The geometry of $U$ is determined by its complement on $\mathbb{G}_m^n$. We turn to study this complement in the following subsection.

\subsection{The torus exceptional degree}
In this subsection, we study the affine open subvarieties covering log Calabi--Yau pairs of birational complexity zero.
Let $(X,B)$ be a log Calabi--Yau pair
and let $\varphi\colon (\pp^n,\Sigma^n) \dashrightarrow 
(X,B)$ be a crepant birational map. 
We define the {\em torus exceptional degree}
of $\varphi$ to be
\[
\mathcal{T}(\varphi):={\rm deg}_{\pp^n}\left(
\overline{{\rm Ex}^1(\varphi)|_{\mathbb{G}^n_m}} 
\right) 
\]
where ${\rm Ex}^1(\varphi)$ is the reduced divisorial part
of the exceptional locus of $\varphi$.
Note that the torus exceptional degree is zero if and only if $\phi$ induces an embedding of a big open subset of the algebraic torus $\mathbb{G}_m^n$.
A big open subset of an algebraic torus will be called an {\em almost torus}.
The {\em torus exceptional degree} of $(X,B)$
is defined to be:
\[
\mathcal{T}(X,B):=\min\{ 
\mathcal{T}(\varphi) \mid 
\text{
$\varphi\colon (\pp^n,\Sigma^n)\dashrightarrow (X,B)$
is a crepant birational map
}
\}. 
\]
In the case that the set in the definition is empty, we will set $\mathcal{T}(X,B)=\infty$.
If $(X,B)$ is a log Calabi--Yau pair
of index one, 
then the torus exceptional degree $\mathcal{T}(X,B)$ is finite
if and only if $c_{\rm bir}(X,B)=0$. 
The {\em total torus exceptional degree}
is defined to be:
\[
\mathcal{T}_t(X,B)={\rm min}\left\{ \sum _{i \in I} \mathcal{T}(\varphi _i) \,\middle|\, \{ \varphi\}_{i \in I} \text{ is a set of crepant birational maps divisorially covering $(X,B)$}
\right\}.
\] 
In Subsection~\ref{subsec:ted}, we expand on the aforementioned definitions.
The following gives a structural theorem for the torus exceptional degree
and the total torus exceptional degree.

\begin{theorem}\label{introthm:t-equal-zero}
Let $(X,B)$ be a log Calabi--Yau pair of index one.
The following statements hold:
\begin{enumerate} 
\item we have $\mathcal{T}_t(X,B)\geq\mathcal{T}(X,B)\geq 0$;
\item the value $\mathcal{T}_t(X,B)$ is finite if and only if $(X,B)$ has birational complexity zero;
\item $\mathcal{T}(X,B)=0$ if and only if 
$(X,B)$ is divisorially covered by finitely many almost tori; and 
\item If $X$ is a $\qq$-factorial Fano variety and $\mathcal{T}(X,B)=0$, then 
the log Calabi--Yau pair $(X,B)$ is covered by finitely many algebraic tori.
\end{enumerate} 
\end{theorem}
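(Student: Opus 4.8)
The plan is to treat the four statements in order, with the forward direction of (3) as the technical heart. \emph{Parts (1) and (2)} are essentially formal. Since $\mathcal{T}(\varphi)$ is the degree of an effective divisor it is nonnegative, so $\mathcal{T}(X,B)\geq 0$ by taking the minimum. For $\mathcal{T}_t(X,B)\geq\mathcal{T}(X,B)$, I note that any family $\{\varphi_i\}_{i\in I}$ divisorially covering $(X,B)$ is nonempty and satisfies $\mathcal{T}(\varphi_i)\geq\mathcal{T}(X,B)$ for each $i$; as the summands are nonnegative, $\sum_{i\in I}\mathcal{T}(\varphi_i)\geq\mathcal{T}(X,B)$, and minimizing over coverings gives the inequality. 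For (2), if $c_{\rm bir}(X,B)=0$ then Theorem~\ref{introthm:covering-bcomp0} furnishes a finite family of crepant birational maps divisorially covering $(X,B)$, and each $\mathcal{T}(\varphi_i)$ is finite because a birational map contracts only finitely many divisors, whence $\mathcal{T}_t(X,B)<\infty$. Conversely, finiteness of $\mathcal{T}_t(X,B)$ forces the defining set of coverings to be nonempty, so there exists a crepant birational map $(\pp^n,\Sigma^n)\dashrightarrow(X,B)$; by \cite[Theorem 1.6]{MM24} this is equivalent to $c_{\rm bir}(X,B)=0$.

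\emph{Part (3).} The reverse implication is easy: a divisorial covering by almost tori contains at least one almost torus $V\subseteq X\setminus B$, and identifying $V$ with a big open subset $W\subseteq\mathbb{G}_m^n\subseteq\pp^n$ extends the inclusion to a birational map $\varphi\colon\pp^n\dashrightarrow X$. This $\varphi$ is crepant because the log canonical form of $(X,B)$, which has neither zeros nor poles on $X\setminus B$, restricts on $V\cong W$ to a scalar multiple of the toric form, so $\varphi$ is volume preserving; and $\mathcal{T}(\varphi)=0$ since $\varphi$ restricts to an isomorphism in codimension one on $\mathbb{G}_m^n$. Hence $\mathcal{T}(X,B)=0$. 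For the forward implication I fix $\varphi_0$ realizing $\mathcal{T}(\varphi_0)=0$, so $U_0:=\varphi_0(\mathbb{G}_m^n\setminus{\rm Ex}(\varphi_0))$ is an almost torus inside $X\setminus B$. A prime divisor $D\subseteq X$ with $D\not\subseteq B$ fails to meet $U_0$ in a divisor only when $D$ is contracted by $\varphi_0^{-1}$: if instead $\varphi_0^{-1}$ mapped $D$ onto a component of $\Sigma^n$, then $D$ would be a log canonical place of $(X,B)$, hence a component of $B$. Since $\varphi_0^{-1}$ contracts only finitely many divisors, only finitely many such $D_1,\dots,D_k$ occur.

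The crux of (3), and the main obstacle of the whole theorem, is to produce for each $D_j$ a crepant birational map $\varphi_j$ with $\mathcal{T}(\varphi_j)=0$ whose almost torus $U_j$ meets $D_j$ in a divisor; then $U_0,U_1,\dots,U_k$ divisorially cover $X\setminus B$. I would construct $\varphi_j$ by precomposing $\varphi_0$ with a crepant birational self-map of $(\pp^n,\Sigma^n)$ that relocates the divisorial valuation of $D_j$ to the interior of $\mathbb{G}_m^n$; monomial Cremona transformations together with the torus action supply such self-maps, and each is an isomorphism in codimension one over $\mathbb{G}_m^n$, so the equality $\mathcal{T}(\varphi_j)=0$ is preserved. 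The delicate point is to show that every valuation arising as an extracted divisor $D_j$ can indeed be made torus-interior by a single such self-map, which I expect to rely on the structure theory of index-one birational complexity zero pairs developed earlier in the paper.

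\emph{Part (4).} Here I would upgrade the almost tori of (3) to honest algebraic tori. Writing $U_j=\varphi_j(W_j)$ with $W_j\subseteq\mathbb{G}_m^n$ big open, it suffices to extend each open immersion $W_j\hookrightarrow X$ to an open immersion $\mathbb{G}_m^n\hookrightarrow X$. Since $\mathcal{T}(\varphi_j)=0$, no divisor meeting $\mathbb{G}_m^n$ is contracted, so $\qq$-factoriality of $X$ rules out codimension-one indeterminacy of $\varphi_j$ along the torus, while the Fano condition is used to extend the embedding across the remaining codimension $\geq 2$ complement $\mathbb{G}_m^n\setminus W_j$ and to guarantee that the resulting genuine tori still cover $X\setminus B$. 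The subtle step is precisely this extension over the codimension-two locus, where the positivity supplied by the Fano condition on the $\qq$-factorial variety $X$ is essential.
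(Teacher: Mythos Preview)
Your treatment of (1), (2), and the reverse direction of (3) is correct and matches the paper (the paper argues the reverse of (3) by pushing forward $B$ rather than via the volume form, but these are equivalent).

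There is a genuine gap in the forward direction of (3). You correctly reduce to: given a canonical place $D_j$ exceptional over $\pp^n$, produce a new crepant map $\varphi_j$ of torus exceptional degree zero for which $D_j$ is a divisor meeting the torus. But the mechanism you propose---monomial Cremona transformations and torus translations---cannot do this. Such maps are \emph{isomorphisms} on $\mathbb{G}_m^n$, not merely isomorphisms in codimension one, so they neither extract nor contract any divisor meeting the torus; in particular $D_j$ remains exceptional after precomposition. What is actually needed is a \emph{non-toric} crepant self-map of $(\pp^n,\Sigma^n)$ that extracts $D_j$ and contracts some other canonical place to compensate. The paper builds this via Lemma~\ref{lem:special-model-complexity-1}: one passes to a complexity-one model $(Y,B_Y)$ carrying a fibration $p\colon Y\to Z$ on which $D_j$ appears as a divisor $E_Y$ together with a second vertical divisor $F_Y$ in the same fiber; running a $(K_Y+B_Y+\epsilon F_Y)$-MMP over $Z$ contracts $F_Y$, dropping the complexity back to zero and yielding a new toric model on which $D_j$ is a divisor meeting the torus. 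This elementary-transformation manoeuvre is the missing idea.

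Your sketch of (4) also does not work as stated. $\qq$-factoriality of the \emph{target} $X$ says nothing about indeterminacy of $\varphi_j$ on the torus, and there is no general extension principle across codimension-two loci for birational maps into Fano varieties. The paper's argument is different and cleaner: since $X$ is Fano, $B$ is ample and $X\setminus B$ is affine; removing from it the finitely many $\varphi_j^{-1}$-exceptional divisors $P_1,\dots,P_k$ (using $\qq$-factoriality to make suitable multiples Cartier) yields an affine open $U_j$. Now $\varphi_j^{-1}$ restricts to a small birational map between the normal affine varieties $U_j$ and $\mathbb{G}_m^n$, hence an isomorphism, so each $U_j$ is an honest torus.
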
 

In dimension two, we prove that the torus exceptional degree is either infinite or bounded above by $9$.

\begin{theorem}\label{introthm:bounded-t-surf}
Let $(X,B)$ be a log Calabi--Yau surface of index one and birational complexity zero. Then, we have 
$\mathcal{T}(X,B)\leq 9$.
\end{theorem}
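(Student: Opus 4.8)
The plan is to fix a crepant birational map realizing $(X,B)$ from $(\pp^2,\Sigma^2)$, pass to a common resolution, isolate exactly the curves that contribute to $\mathcal{T}$, control them by adjunction together with the crepant (volume-preserving) condition, and finally bound their total degree on $\pp^2$ by $(-K_{\pp^2})^2=9$. Concretely, since $c_{\rm bir}(X,B)=0$ there is a crepant birational map $\varphi\colon(\pp^2,\Sigma^2)\drar(X,B)$, which I resolve by a smooth surface $W$ with birational morphisms $p\colon W\to\pp^2$ and $q\colon W\to X$ satisfying
\[
K_W+B_W=p^*(K_{\pp^2}+\Sigma^2)=q^*(K_X+B),\qquad -K_W\sim B_W\ge 0 .
\]
The prime divisors contributing to $\mathcal{T}(\varphi)$ are the strict transforms $\tilde E_1,\dots,\tilde E_k$ on $W$ of the $\varphi$-exceptional curves $E_i\subset\pp^2$ with $E_i\not\subseteq\Sigma^2$; these are precisely the prime divisors on $W$ that are $q$-exceptional, not $p$-exceptional, and have coefficient $0$ in $B_W$. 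Thus $\mathcal{T}(\varphi)=\sum_i\deg_{\pp^2}E_i$, and since $E_i\not\subseteq\Sigma^2$ maps isomorphically under $p$ we have $\deg_{\pp^2}E_i=\tilde E_i\cdot p^*H=\tfrac13\,\tilde E_i\cdot p^*(-K_{\pp^2})$.

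Next I would extract the local structure of the $\tilde E_i$. Because $E_i\not\subseteq\Sigma^2$ its coefficient in $B_W$ is $0$, so $\tilde E_i\cdot(-K_W)=\tilde E_i\cdot B_W\ge 0$; combined with $q$-exceptionality ($\tilde E_i^2<0$) and adjunction $\tilde E_i^2+K_W\cdot\tilde E_i=-2$ on the smooth surface $W$, one gets $\tilde E_i\cdot B_W=-2-\tilde E_i^2\in\{0,1\}$, so each $\tilde E_i$ is either a $(-2)$-curve disjoint from $B_W$ or a $(-1)$-curve meeting $B_W$ in a single point. I then minimize: among all crepant maps $\varphi$ I choose one whose resolution is as small as possible (equivalently, run a relative MMP for $q$ and a relative MMP for $p$), which is legitimate since it is enough to exhibit \emph{one} representative with small $\mathcal{T}(\varphi)$, the invariant $\mathcal{T}(X,B)$ being the minimum over all such $\varphi$. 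For a minimal representative the $\tilde E_i$ become disjoint $(-1)$-curves, each meeting $B_W$ once and mapping isomorphically onto an irreducible curve $E_i\subset\pp^2$.

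The crux is the degree bound $\sum_i\deg_{\pp^2}E_i\le(-K_{\pp^2})^2=9$. Here I expect to combine the Hodge index theorem on $W$ with the negativity of the $q$-exceptional configuration and the fact that all the classes $p_*[\tilde E_i]$ lie on the single ray $\zz H\subset\Pic(\pp^2)$; the essential input is the volume-preserving constraint, which forces the base points of $\varphi$ to lie on the anticanonical boundary $\Sigma^2$ and thereby rules out contracted curves of uncontrolled degree. The main obstacle is exactly this last inequality: a priori the $E_i$ may mutually intersect and have large degree, so one cannot bound the sum by naive intersection theory alone. I would resolve it using the Gross--Hacking--Keel description of log Calabi--Yau surfaces of index one and birational complexity zero (whose dlt boundary is an anticanonical cycle of rational curves, $\D(X,B)\simeq_{\rm PL}S^1$): this reduces $\varphi$ to a toric model followed by finitely many interior (non-toric) blow-ups, organizing the possible $\tilde E_i$-configurations into finitely many extremal cases. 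Checking these, I expect the maximal total degree to be realized by the standard quadratic and cubic volume-preserving Cremona transformations of $(\pp^2,\Sigma^2)$, yielding the sharp bound $\mathcal{T}(X,B)\le 9$.
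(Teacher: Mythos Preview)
Your approach is genuinely different from the paper's, and at the decisive step it is incomplete. The paper does not attempt a uniform intersection-theoretic bound at all: it first replaces $(X,B)$ by a dlt modification and runs a $K$-MMP to reach either a Gorenstein del Pezzo surface of Picard rank one or a Mori fiber space over $\pp^1$. In the del Pezzo case it invokes the explicit classification (Theorem~\ref{introthm:bound-ted-gor-dP}, proved via Lemmata~\ref{lem:Gor-del-Pezzo-cubic}--\ref{lem:Gor-del-Pezzo-lines} and \ref{lem:ted-cubic-P2}--\ref{lem:ted-conic-P2}); in the conic-bundle case it performs a short chain of elementary transformations and one Cremona. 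In every branch the bound $9$ is obtained by \emph{constructing} a specific crepant map whose exceptional locus on $\pp^2$ consists of a few low-degree curves, not by bounding the exceptional locus of an abstractly chosen $\varphi$.

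Your argument breaks at exactly the point you label ``the crux'': the inequality $\sum_i \deg_{\pp^2} E_i \le (-K_{\pp^2})^2 = 9$ is asserted, not proved, and the tools you name do not deliver it. Applying Hodge index to $p^*H$ (with $(p^*H)^2=1$) and $D=\sum_i \tilde E_i$ (with $D^2=-k$ for $k$ disjoint $(-1)$-curves) gives only $(D\cdot p^*H)^2 \ge -k$, which is vacuous. Invoking the Gross--Hacking--Keel toric model is closer to what the paper actually does, but it does not on its own produce the number $9$; one is led back to a case analysis of the non-toric blow-ups. There are also subsidiary problems. First, $B_W\ge 0$ is not automatic for an arbitrary common resolution; it requires that every $p$-exceptional divisor be an lc place of $(\pp^2,\Sigma^2)$, which you would need to arrange. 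Second, your ``minimization'' conflates two different operations: running relative MMPs for $p$ and $q$ simplifies the resolution of a \emph{fixed} $\varphi$, but it neither changes $\varphi$ nor produces a new crepant map with smaller $\mathcal{T}$, and it does not force the $\tilde E_i$ to be disjoint. Since $\mathcal{T}(\varphi)$ can be made arbitrarily large by precomposing with Cremona maps, any successful proof must, as the paper does, exhibit a good $\varphi$ rather than bound a generic one.
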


The previous theorem states that log Calabi--Yau surfaces of birational complexity zero
admit affine charts of the form $\phi(\mathbb{G}_m^2\setminus {\rm Ex}(\phi))$ where $\phi\colon \mathbb{G}_m^2 \dashrightarrow X\setminus B$ is a birational map and the degree of ${\rm Ex}(\phi)$ is controlled above.
In Example~\ref{ex:total-ted-unbounded},
we show that there is a sequence log Calabi--Yau pairs $(X_n,B_n)$ of dimension $2$, index one, and birational complexity zero, for which the sequence $\mathcal{T}_t(X_n,B_n)$ diverges to infinity. 
Theorem~\ref{introthm:bounded-t-surf} will be a consequence of the following explicit result for Gorenstein del Pezzo surfaces of rank one.

\begin{theorem}\label{introthm:bound-ted-gor-dP}
Let $X$ be a Gorenstein del Pezzo surface of Picard rank one.
Let $(X,B)$ be a log Calabi--Yau pair 
of index one and birational complexity zero.
Then, we have $\mathcal{T}(X,B)\in \{0,1,\dots,9\}$.
\end{theorem}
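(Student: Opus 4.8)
The plan is to produce, for each such $X$, a single crepant birational map $\varphi\colon(\pp^2,\Sigma^2)\dashrightarrow(X,B)$ with $\mathcal{T}(\varphi)\le 9$. Since $\mathcal{T}(X,B)$ is a minimum of non-negative integers (degrees of divisors on $\pp^2$) and is finite because $(X,B)$ has birational complexity zero, this yields $\mathcal{T}(X,B)\in\{0,1,\dots,9\}$. Thus only the upper bound requires work.

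First I would pass to the minimal resolution $\mu\colon \tilde X\to X$. As $X$ is Gorenstein with canonical (Du Val) singularities, $\mu$ is crepant, so $(\tilde X,\tilde B)$ with $\tilde B=\mu_*^{-1}B$ is crepant birational to $(X,B)$ and $\tilde X$ is a smooth weak del Pezzo surface of degree $d=K_X^2$. The $\mu$-exceptional locus consists of the $(-2)$-curves $F_1,\dots,F_{9-d}$, and since $X$ has Picard rank one these span a root lattice of rank $9-d$ (an ADE configuration), by the classification of Gorenstein del Pezzo surfaces of Picard rank one. For $d\in\{8,9\}$ the surface $X$ is $\pp^2$ or the quadric cone $\pp(1,1,2)$, both toric, so $\mathcal{T}(X,B)=0$ and there is nothing to prove; I may therefore assume $d\le 7$, where $\tilde X$ admits a birational morphism $\pi\colon\tilde X\to\pp^2$ contracting $9-d$ curves.

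Next I would choose $\pi$ so that $\pi_*\tilde B=\Sigma^2$, making $\varphi:=\mu\circ\pi^{-1}\colon(\pp^2,\Sigma^2)\dashrightarrow(X,B)$ crepant. Granting this, a curve $C\subset\pp^2$ is contracted by $\varphi$ exactly when its strict transform on $\tilde X$ is one of the $F_j$; as the strict transform of an honest plane curve is never $\pi$-exceptional, the non-boundary contracted curves are precisely the images $\pi(F_j)$ of those $(-2)$-curves that are not $\pi$-exceptional and do not map into $\Sigma^2$. Writing $L=\pi^*\mathcal O_{\pp^2}(1)$, this identifies
\[
\mathcal{T}(\varphi)=\sum_{F_j\in S}F_j\cdot L,\qquad S=\{F_j : \pi(F_j)\text{ is a curve not contained in }\Sigma^2\}.
\]
Since each $F_j$ is a $(-2)$-curve we have $F_j\cdot(-K_{\tilde X})=0$, and writing $\pi^*(-K_{\pp^2})=-K_{\tilde X}+\mathcal E$ with $\mathcal E\ge 0$ the $\pi$-exceptional divisor and $L=\tfrac13\pi^*(-K_{\pp^2})$, one gets the crude estimate $\mathcal{T}(\varphi)\le \tfrac13\big(\sum_j F_j\big)\cdot\mathcal E$. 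This reduces the bound to controlling intersection numbers of the $(-2)$-curves against $\mathcal E$, for a choice of $\pi$ keeping $S$ (hence the sum) as small as possible.

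The main obstacle is twofold. First, arranging $\pi_*\tilde B=\Sigma^2$: because $(\tilde X,\tilde B)$ has birational complexity zero its dual complex is $S^1$, so $\tilde B$ is an anticanonical cycle of rational curves, and one must select a birational morphism to $\pp^2$ carrying this cycle to a triangle of lines by contracting $(-1)$-curves compatibly with the cycle, replacing $(\tilde X,\tilde B)$ by a crepant-equivalent model when necessary. Second, and more seriously, establishing the uniform bound $\mathcal{T}(\varphi)\le 9$ across all singularity types: I would run through the classification of Gorenstein del Pezzo surfaces of Picard rank one, organized by degree $d$ and ADE type, and in each case choose a morphism $\pi$ minimizing $S$ and compute $\sum_{F_j\in S}F_j\cdot L$ explicitly. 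The extremal cases occur for small $d$ (for instance the degree-one surface with an $E_8$ singularity), where the bound is tight and the value $9=(-K_{\pp^2})^2$ appears as the maximal attainable degree; verifying that no configuration exceeds it is the crux of the argument.
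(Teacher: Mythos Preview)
Your approach has a genuine gap at the step ``choose $\pi$ so that $\pi_*\tilde B=\Sigma^2$.'' The boundary $B$ is \emph{given}, and its strict transform $\tilde B$ on the minimal resolution can very well be irreducible (an anticanonical nodal rational curve) or have only two components (a conic plus a transversal line, after pushing to $\pp^2$). In those cases no birational morphism $\pi\colon\tilde X\to\pp^2$ can send $\tilde B$ to three lines: a morphism cannot increase the number of components. Your escape clause ``replacing $(\tilde X,\tilde B)$ by a crepant-equivalent model when necessary'' is exactly where the work lies, and once you allow such replacements your formula $\mathcal T(\varphi)=\sum_{F_j\in S}F_j\cdot L$ no longer captures the exceptional locus, since the modification introduces new exceptional divisors that are neither $(-2)$-curves of $\tilde X$ nor $\pi$-exceptional. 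Your crude estimate then says nothing about these, and the promised case-by-case check has no mechanism to bound their degree.

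The paper's proof is organized quite differently and handles precisely this issue. For each singularity type it fixes once and for all a birational map $\phi\colon\pp^2\dashrightarrow X$ (via the minimal resolution) whose exceptional locus on $\pp^2$ consists of at most three lines, or a line and a conic, all passing through at most two marked points $R_1,R_2$. For the \emph{given} complement $B$, the pushforward $B_{\pp^2}$ is then one of: three lines, a conic plus a line, or a nodal cubic, and crucially it is forced to pass through $R_1,R_2$ (because the last $(-1)$-curves over those points meet $B_Y$). Two explicit lemmas then provide Cremona-type maps $(\pp^2,\Sigma^2)\dashrightarrow(\pp^2,B_{\pp^2})$ of torus exceptional degree at most $1$ (conic plus line case) or $3$ (nodal cubic case), with the additional feature that the strict transforms of the lines/conic through $R_1,R_2$ have degree at most $2$ (resp.\ $4$). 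Composing and summing degrees yields $\mathcal T(X,B)\le 9$. The point you are missing is this two-stage structure: first a fixed map with small, well-placed exceptional locus, then a controlled Cremona correction adapted to whatever $B_{\pp^2}$ turns out to be.
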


We propose the following conjecture 
which predicts that the torus exceptional degree is bounded above in terms of the dimension.

\begin{conjecture}\label{conj:upper-bound-ted}
Let $n$ be a positive integer.
There exists a positive integer $t_n$, only depending on $n$, satisfying the following.
Let $(X,B)$ be a log Calabi--Yau pair of dimension $n$, index one, and birational complexity zero. 
Then, we have $\mathcal{T}(X,B)\leq t_n$.
\end{conjecture}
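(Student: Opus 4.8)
The plan is to deduce the uniform bound from a boundedness statement for the relevant pairs together with control of how the invariant $\mathcal{T}$ behaves in a crepant birational class. I would argue in three stages: first produce a bounded family of ``good models'' crepant birational to $(X,B)$; second transfer an upper bound on $\mathcal{T}$ from such a model back to $(X,B)$; and third bound $\mathcal{T}$ uniformly over the bounded family. I would run an outer induction on $n$, using the dual complex to peel off boundary strata, which by the expected analogue of Conjecture~\ref{conj:log-rationality} are themselves log Calabi--Yau of dimension $n-1$, index one, and birational complexity zero.

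First stage (boundedness). Since $(X,B)$ has index one we have $K_X+B\sim 0$, so $-K_X\sim B\geq 0$ is effective, and as $(X,B)$ is crepant birational to $(\pp^n,\Sigma^n)$ the variety $X$ is rational. Running a suitable minimal model program I would reach a crepant birational model $(X_0,B_0)$ on which $X_0$ is a $\qq$-factorial Fano (or Mori fibre) variety, still of index one and birational complexity zero. The hypothesis $c_{\rm bir}(X,B)=0$ is the essential input: it forces $\mathcal{D}(X_0,B_0)\simeq_{\rm PL} S^{n-1}$ and, more importantly, constrains the combinatorics of the log canonical places. I would then invoke Birkar's boundedness of complements together with boundedness of Fano varieties (BAB): the coefficients of $B_0$ are $1$, the index is one, and the singularities are log canonical, so a bounded $\mathbb{N}$-complement exists and pins $X_0$ into finitely many bounded families.

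Second stage (transfer). The invariant $\mathcal{T}$ is \emph{not} a crepant birational invariant, so I must control how it changes under the reduction $\psi\colon (X,B)\dashrightarrow (X_0,B_0)$. The key observation is that $\psi$ only contracts or extracts divisors $E$ with $a(E;X,B)=1$, i.e.\ log canonical places of $(X,B)$. Composing a crepant birational map $\varphi_0\colon(\pp^n,\Sigma^n)\dashrightarrow (X_0,B_0)$ with $\psi^{-1}$ produces $\varphi=\psi^{-1}\circ\varphi_0\colon (\pp^n,\Sigma^n)\dashrightarrow (X,B)$, and the torus-exceptional locus changes only by these finitely many places. Hence $\mathcal{T}(X,B)\leq \mathcal{T}(\varphi)\leq \mathcal{T}(\varphi_0)+c(\psi)$, where the correction $c(\psi)$ is the sum of the degrees in $\pp^n$ of the places contracted or extracted by $\psi$, each bounded once $(X_0,B_0)$ lies in a bounded family.

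Third stage and obstacle. Over the bounded family of models $(X_0,B_0)$ I would choose $\varphi_0$ realizing birational complexity zero from a bounded family: the mobile system $\varphi_0^{-1,*}\oo_{\pp^n}(1)$ is, by crepancy, proportional to $-K_{X_0}/(n+1)$ and so has bounded degree, while the divisors of $\varphi_0$ meeting the torus are exactly the divisors over $X_0$ of log discrepancy one with exceptional center, whose number and degrees in $\pp^n$ are bounded by boundedness of complements; thus $\mathcal{T}(X_0,B_0)$ is bounded uniformly, and combined with the second stage this yields $t_n$. The main obstacle is the boundedness of the first stage: boundedness of log Calabi--Yau pairs fails without strong hypotheses, and although $c_{\rm bir}=0$ is exactly the constraint that should force it, deducing boundedness of the model $X_0$ itself (not merely of some Fano invariant) and controlling the degree growth of the Cremona-type crepant maps $\varphi_0$ for $n\geq 3$ is where the difficulty concentrates; indeed the surface case (Theorem~\ref{introthm:bound-ted-gor-dP}) works only because Gorenstein del Pezzo surfaces of Picard rank one form an explicit finite list, and it is the absence of such a classification in higher dimension that makes the statement a conjecture.
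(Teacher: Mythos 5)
This statement is Conjecture~\ref{conj:upper-bound-ted}: the paper does not prove it, and its only evidence is the case $n=2$ (Theorems~\ref{introthm:bounded-t-surf} and~\ref{introthm:bound-ted-gor-dP}), so your proposal must stand on its own as a proof strategy --- and its first stage fails. The hypothesis $\mathcal{D}(X,B)\simeq_{\rm PL}S^{n-1}$ forces coregularity zero, so any model $(X_0,B_0)$ is strictly log canonical and BAB, which requires $\epsilon$-lc singularities for a fixed $\epsilon>0$, does not apply; likewise Birkar's boundedness of complements bounds the index of a complement on a given Fano, not the family of underlying varieties. The claim that the models are ``pinned into finitely many bounded families'' is simply false: already for $n=2$ the weighted projective planes $\pp(1,1,m)$ with their toric boundaries are index-one log Calabi--Yau pairs of birational complexity zero, $\qq$-factorial Fano of Picard rank one, and form an unbounded family as $m\to\infty$. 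This is precisely why the paper proves only \emph{affine} boundedness (Theorem~\ref{introthm:bcomp-zero-surf-aff-bounded}). These toric examples have $\mathcal{T}=0$, which shows the route through boundedness of models is not merely unproved but misconceived: the expected bound on $\mathcal{T}$ cannot come from finiteness of the models. Even in the paper's dimension-two proof, what saves the day is not abstract boundedness but Ye's explicit finite classification of Gorenstein del Pezzo surfaces of Picard rank one (with the $D_4+D_4$, $X_1(E_8)$, $X_1(E_7+A_1)$, $X_1(E_6+A_2)$ cases \emph{excluded} because they admit no suitable complement) together with hands-on Cremona computations (Lemmas~\ref{lem:ted-cubic-P2} and~\ref{lem:ted-conic-P2}); no higher-dimensional substitute for either ingredient is offered.

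There are also local errors. Divisors with $a_E(X,B)=1$ are \emph{canonical} places, not log canonical places (which have log discrepancy $0$); in index one the divisors contracted by the MMP are of exactly these two kinds, which is the content of Lemma~\ref{lem:factoring-contraction}. Your second stage is in fact easier than you make it: if $\psi\colon X\dashrightarrow X_0$ is a birational contraction, then ${\rm Ex}(\psi^{-1}\circ\varphi_0)\subseteq{\rm Ex}(\varphi_0)$, so $\mathcal{T}(X,B)\leq\mathcal{T}(X_0,B_0)$ with no correction term $c(\psi)$ at all --- this monotonicity is exactly the paper's opening reduction in the proof of Theorem~\ref{introthm:bounded-t-surf} --- but it only shifts all the weight onto your third stage, which does not hold up: crepancy constrains $K+B$, not the polarization, so the strict transform of $\oo_{\pp^n}(1)$ is in no sense proportional to $-K_{X_0}/(n+1)$ (precomposing with Cremona maps changes the degree of the mobile system at will, as the paper notes after Definition~\ref{def:ted}), and boundedness of complements says nothing about the number or degrees of the $\varphi_0$-exceptional divisors meeting $\mathbb{G}_m^n$. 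Finally, the proposed induction on $n$ via strata of the dual complex is announced but never used. You correctly diagnose in your last sentence where the difficulty lies; the proposal as written does not overcome it, and the statement remains, as in the paper, a conjecture.
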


\subsection{Connections with boundedness}
In this subsection, we draw a connection between log Calabi--Yau pairs of birational complexity zero and boundedness of algebraic varieties. 
First, we introduce the concept of {\em affinely bounded} families of projective varieties.

\begin{definition}
{\em 
Let $\mathcal{F}$ be a family of projective varieties. 
We say that $\mathcal{F}$ is {\em affinely bounded} if there exists a scheme $T$ of finite type over the ground field $\kk$ and a finite type affine morphism $\mathcal{U}\rightarrow T$
such that every element $X\in \mathcal{F}$
admits an open affine set isomorphic to $\mathcal{U}_t$ for some $t\in T$.
}
\end{definition}

Bounded families of projective varieties are affinely bounded.
In Example~\ref{ex:affine-bounded-not-bounded}, we recall that the converse is not true.
Further, affinely bounded families are birationally bounded.
In Example~\ref{ex:bir-bound-not-aff-bound}, we show that there are families of rational varieties that are not affinely bounded.
As an application of Theorem~\ref{introthm:bounded-t-surf}, we obtain the following theorem.

\begin{theorem} 
\label{introthm:bcomp-zero-surf-aff-bounded}
Let $\mathcal{F}_2$ be the family of projective surfaces $X$ satisfying the following conditions:
\begin{itemize}
    \item there is a log Calabi--Yau pair $(X,B)$ with $K_X+B\sim 0$; and 
    \item we have $c_{\rm bir}(X,B)=0$.
\end{itemize}
Then the family $\mathcal{F}_2$ is affinely bounded.
\end{theorem}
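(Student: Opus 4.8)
The plan is to combine the degree bound of Theorem~\ref{introthm:bounded-t-surf} with a single universal family of affine surfaces built from plane curves of bounded degree. Fix $X\in\mathcal{F}_2$ together with a log Calabi--Yau structure $(X,B)$ as in the statement, so that $K_X+B\sim 0$ and $c_{\rm bir}(X,B)=0$. Since $(X,B)$ has index one and birational complexity zero, its torus exceptional degree is finite, and Theorem~\ref{introthm:bounded-t-surf} yields $\mathcal{T}(X,B)\leq 9$. Hence there is a crepant birational map $\varphi\colon(\pp^2,\Sigma^2)\dashrightarrow(X,B)$ with $\mathcal{T}(\varphi)\leq 9$, and by the construction underlying Theorem~\ref{introthm:covering-bcomp0} the attached set $U:=\varphi(\G_m^2\setminus{\rm Ex}(\varphi))$ is an open affine subvariety of $X\setminus B\subseteq X$ that is isomorphic, via $\varphi$, to an open subvariety $V\subseteq\G_m^2$.

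First I would identify $V$ precisely. Since $V$ is affine and $\G_m^2$ is smooth (hence normal), the complement $\G_m^2\setminus V$ has pure codimension one: every regular function on $V$ extends across a codimension-two locus by normality, so a stray point of the complement lying off its divisorial part $D$ would force $\oo(V)=\oo(\G_m^2\setminus D)$ and contradict the affineness of $V$. Thus $V=\G_m^2\setminus D$ for an effective divisor $D$, which is moreover principal because $\Pic(\G_m^2)=0$; writing $D=\{f=0\}$ reconfirms $V=\Spec \kk[x^{\pm1},y^{\pm1}]_f$. By the very definition of $\mathcal{T}(\varphi)$ the closure $\overline{D}\subseteq\pp^2$ is a plane curve with $\deg_{\pp^2}\overline D=\mathcal{T}(\varphi)\leq 9$, and since $\G_m^2=\pp^2\setminus\Sigma^2$ we obtain
\[
V\;\cong\;\pp^2\setminus\big(\overline{D}\cup\Sigma^2\big),
\]
the complement of a plane curve of degree $\deg\overline D+3\leq 12$. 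In particular $V$ is determined by the single plane curve $\overline D$ of degree at most $9$.

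It then remains to assemble these complements into one affine family over a base of finite type. Put
\[
T\;:=\;\bigsqcup_{d=0}^{9}\pp\big(H^0(\pp^2,\oo_{\pp^2}(d))\big),
\]
a finite type $\kk$-scheme, and let $\mathcal{C}\subseteq\pp^2\times T$ be the universal curve, so that on the component $T_d$ one has $\oo_{\pp^2\times T_d}\big(\mathcal{C}+\Sigma^2\times T_d\big)\cong\oo_{\pp^2}(d+3)\boxtimes\oo_{T_d}(1)$. Define
\[
\mathcal{U}\;:=\;(\pp^2\times T)\setminus\big(\mathcal{C}\cup(\Sigma^2\times T)\big),
\]
with its projection to $T$. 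On each $T_d$ the divisor $\mathcal{C}\cup(\Sigma^2\times T_d)$ restricts on every fiber $\pp^2$ to a curve of degree $d+3>0$, so the corresponding line bundle is ample on all fibers and therefore relatively ample over $T_d$; consequently the complement $\mathcal{U}\to T$ is a finite type affine morphism. By construction the fiber of $\mathcal{U}$ over the point $[\overline D]\in T_{\deg\overline D}$ is exactly $\pp^2\setminus(\overline D\cup\Sigma^2)\cong V\cong U$. Thus every $X\in\mathcal{F}_2$ admits an open affine subset isomorphic to a fiber $\mathcal{U}_t$, which is precisely the assertion that $\mathcal{F}_2$ is affinely bounded.

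The main obstacle is the passage from a family that is merely fiberwise affine to a genuinely affine morphism $\mathcal{U}\to T$; this is exactly why I pass to the closure $\overline D$ and adjoin the coordinate triangle $\Sigma^2$, making the removed divisor relatively ample rather than working inside $\G_m^2\times T$, where the fibers need not assemble into an affine morphism. The other delicate point is the purity step of the second paragraph, ensuring that the affine chart $U$ really is the complement of a divisor and not of a divisor together with finitely many points; here the affineness of the charts supplied by Theorem~\ref{introthm:covering-bcomp0}, together with $\Pic(\G_m^2)=0$, is used in an essential way.
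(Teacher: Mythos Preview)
Your proof is correct and follows essentially the same approach as the paper's: invoke Theorem~\ref{introthm:bounded-t-surf} to get $\mathcal{T}(X,B)\leq 9$, identify the resulting torus chart with the complement of a plane curve of bounded degree, and assemble these complements into one finite-type affine family. The paper works in $\mathbb{A}^2$ (degree $\leq 11$) and writes the family as a single localization over an affine parameter space, while you work in $\pp^2$ (degree $\leq 12$) and are more explicit about relative ampleness and the purity of $\G_m^2\setminus V$; neither difference is substantive.
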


We expect the previous theorem to generalize to higher dimensions.

\begin{conjecture} 
\label{introthm:bcomp-zero-aff-bounded}
Let $\mathcal{F}_n$ be the family of $n$-dimensional projective varieties $X$ satisfying the following conditions:
\begin{itemize}
    \item there exists a log Calabi--Yau pair $(X,B)$ with $K_X+B\sim 0$; and 
    \item we have $c_{\rm bir}(X,B)=0$.
\end{itemize}
Then the family $\mathcal{F}_n$ is affinely bounded.
\end{conjecture}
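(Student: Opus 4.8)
The plan is to reduce the affine boundedness of $\mathcal{F}_n$ to the production of a single, uniformly bounded torus chart on each member, and then to isolate the one genuinely hard input. Fix $X\in\mathcal{F}_n$ together with a witnessing pair $(X,B)$, so that $K_X+B\sim 0$ and $c_{\rm bir}(X,B)=0$. By Theorem~\ref{introthm:t-equal-zero}(2) we have $\mathcal{T}_t(X,B)<\infty$, hence by part (1) of the same theorem $\mathcal{T}(X,B)\le\mathcal{T}_t(X,B)<\infty$; in particular the defining set of crepant birational maps is nonempty and its minimum is attained by some $\varphi\colon(\pp^n,\Sigma^n)\dashrightarrow(X,B)$ with $\mathcal{T}(\varphi)=\mathcal{T}(X,B)$. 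First I would extract from Theorem~\ref{introthm:covering-bcomp0} the associated chart $U:=\varphi(\mathbb{G}_m^n\setminus{\rm Ex}(\varphi))$, which is an open affine subvariety of $X\setminus B\subseteq X$ and is isomorphic to an open subset of $\mathbb{G}_m^n$. Thus every member of $\mathcal{F}_n$ carries a distinguished open affine that embeds in the standard torus.

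Next I would make this chart explicit and measure it. Being open and affine inside $\mathbb{G}_m^n$, the complement of $U$ is a pure-codimension-one divisor $D$, which is forced to be $\overline{{\rm Ex}^1(\varphi)}|_{\mathbb{G}_m^n}$; since ${\rm Cl}(\mathbb{G}_m^n)=0$ this divisor is principal, so $U\cong\mathbb{G}_m^n\setminus\{f=0\}$ for a Laurent polynomial $f$, unique up to a monomial factor. The relevant invariant is therefore the Newton polytope $P_f$ up to translation, and the point is that the closure degree $\deg_{\pp^n}\big(\overline{D}\big)=\mathcal{T}(\varphi)$ controls $P_f$: a bound $\mathcal{T}(X,B)\le t$ confines $P_f$, after translation, to a region of size bounded in terms of $t$ and $n$. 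Consequently, for a fixed bound $t$ the pairs consisting of a Newton polytope and a coefficient vector range over a finite-type parameter scheme $T$, and the incidence variety $\mathcal{U}:=\{(x,s)\in\mathbb{G}_m^n\times T\mid f_s(x)\ne 0\}$ defines an affine morphism $\mathcal{U}\to T$ of finite type whose fibres $\mathcal{U}_s$ recover the charts $U$ above. This already proves affine boundedness for the subfamily of $\mathcal{F}_n$ cut out by $\mathcal{T}(X,B)\le t$.

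Finally, and this is the crux, one needs a uniform bound $\mathcal{T}(X,B)\le t_n$ depending only on $n$ across all of $\mathcal{F}_n$. This is precisely Conjecture~\ref{conj:upper-bound-ted}, and it is the sole missing ingredient: granting it, taking the bound $t=t_n$ in the previous paragraph globalizes the construction to all of $\mathcal{F}_n$ and produces the required $\mathcal{U}\to T$. The main obstacle is thus not the boundedness of the space of charts, which is elementary toric bookkeeping once the degree is controlled, but the uniform control of the torus exceptional degree itself. In dimension $n=2$ this obstacle disappears unconditionally: Theorem~\ref{introthm:bounded-t-surf} supplies $t_2=9$, and feeding this into the argument recovers the affine boundedness of $\mathcal{F}_2$ in Theorem~\ref{introthm:bcomp-zero-surf-aff-bounded}. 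I expect the general case to follow verbatim once Conjecture~\ref{conj:upper-bound-ted} is settled.
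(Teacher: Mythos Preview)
This statement is a conjecture in the paper, not a theorem; no proof is given beyond the surface case, and your write-up correctly presents itself as a conditional reduction rather than a proof. The reduction you outline---bound $\mathcal{T}(X,B)$ uniformly, extract from a minimizing crepant map an affine chart isomorphic to $\mathbb{G}_m^n\setminus\{f=0\}$ with the Newton size of $f$ controlled, and then parametrize all such charts by a finite-type base---is exactly the mechanism the paper uses to prove the case $n=2$ in Theorem~\ref{introthm:bcomp-zero-surf-aff-bounded}: there the input $\mathcal{T}(X,B)\le 9$ from Theorem~\ref{introthm:bounded-t-surf} yields an open subset isomorphic to $\mathbb{A}^2\setminus D$ with $\deg D\le 11$, and these are packaged into a single affine family over the coefficient space. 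Your identification of Conjecture~\ref{conj:upper-bound-ted} as the sole missing ingredient is on the nose and matches the paper's implicit template.
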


\subsection{The cluster type}
In this subsection, we introduce the concept of cluster type log Calabi--Yau pairs and prove some structural theorems about these pairs.

We say that a log Calabi--Yau pair $(X,B)$ is {\em of cluster type} if $\mathcal{T}(X,B)=0$.
By Theorem~\ref{introthm:t-equal-zero}, 
for a log Calabi--Yau pair $(X,B)$ of cluster type the open subvariety $X\setminus B$ is covered by finitely many almost tori.
In this case, the {\em cluster type} of $(X,B)$,
denoted by $\mathcal{C}(X,B)$, is the smallest number of almost tori
needed to divisorially cover $X\setminus B$.
If a log Calabi--Yau pair is not of cluster type, then we set $\mathcal{C}(X,B)=\infty$.
The following theorem characterizes 
log Calabi--Yau toric pairs among
pairs of cluster type.

\begin{theorem}\label{theorem:cluster-type-1}
Let $(X,B)$ be a log Calabi--Yau pair. 
Then, we have that 
$\mathcal{C}(X,B)\geq 1$.
Furthermore, the equality holds
if and only if the pair $(X,B)$ is a
toric log Calabi--Yau pair.
\end{theorem}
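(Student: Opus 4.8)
The plan is to establish $\mathcal{C}(X,B)\ge 1$ by an elementary counting argument, the ``if'' direction by exhibiting an explicit toric chart, and the ``only if'' direction by converting the covering datum into a complexity computation and invoking \cite[Theorem 1.2]{BMSZ18}. First, the bound $\mathcal{C}(X,B)\ge 1$ is immediate: if $(X,B)$ is not of cluster type then $\mathcal{C}(X,B)=\infty\ge 1$, while if it is of cluster type then $\mathcal{C}(X,B)$ is by definition the least number of almost tori needed to divisorially cover the nonempty variety $X\setminus B$, so at least one chart is required. For the ``if'' direction, assume $(X,B)$ is a toric log Calabi--Yau pair, so that $X$ is toric with big open torus $T\cong \G_m^n$ and $B$ is the reduced toric boundary; hence $X\setminus B=T$. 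Fixing an identification of the cocharacter lattices of $X$ and of $\pp^n$ with $\zz^n$, the identity of $\zz^n$ induces a toric crepant birational map $\varphi\colon(\pp^n,\Sigma^n)\dashrightarrow(X,B)$ restricting to an isomorphism of the open tori, so ${\rm Ex}(\varphi)\cap\G_m^n=\emptyset$, giving $\mathcal{T}(\varphi)=0$ and $\varphi(\G_m^n)=T=X\setminus B$. Thus the single almost torus $X\setminus B$ covers $X\setminus B$, yielding $\mathcal{C}(X,B)\le 1$ and therefore $\mathcal{C}(X,B)=1$.

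For the converse, assume $\mathcal{C}(X,B)=1$. Then there is a crepant birational map $\varphi\colon(\pp^n,\Sigma^n)\dashrightarrow(X,B)$ with $\mathcal{T}(\varphi)=0$ whose almost torus $U=\varphi(\G_m^n\setminus{\rm Ex}(\varphi))$ divisorially covers $X\setminus B$. Since $\mathcal{T}(X,B)$ is finite, $(X,B)$ is crepant birational to the index-one pair $(\pp^n,\Sigma^n)$, so it has index one; consequently $K_X+B\sim 0$ forces $B\sim -K_X$ to be an integral Weil divisor, which together with log canonicity and effectivity makes $B$ reduced. By construction $U$ is at once a big open subset of $X\setminus B$ and, via $\varphi$, a big open subset of $\G_m^n$. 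The coordinate characters $x_1,\dots,x_n$ of $\G_m^n$ therefore restrict to units on $U$; viewed as rational functions on $X$, each $\dv_X(x_i)$ is trivial on $U$, hence its codimension-one part lies in $(X\setminus B)\setminus U$, which has codimension at least $2$ in $X\setminus B$, so $\dv_X(x_i)$ is supported on $B$. Hence $x_1,\dots,x_n\in\oo(X\setminus B)^\times$.

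I would then feed this into the excision sequence of class groups. Because $U$ is big open in both $X\setminus B$ and $\G_m^n$, restriction gives an injection $\oo(X\setminus B)^\times/\kk^\times\hookrightarrow\oo(U)^\times/\kk^\times=\oo(\G_m^n)^\times/\kk^\times\cong\zz^n$, and $\Cl(X\setminus B)\cong\Cl(U)\cong\Cl(\G_m^n)=0$; the multiplicative independence of $x_1,\dots,x_n$ forces $\oo(X\setminus B)^\times/\kk^\times\cong\zz^n$. Writing $B=B_1+\dots+B_r$ and using that $X$ is projective (so $\oo(X)^\times=\kk^\times$), the excision sequence
\[
0\to \oo(X\setminus B)^\times/\kk^\times\xrightarrow{\ \dv\ }\bigoplus_{j=1}^{r}\zz[B_j]\to\Cl(X)\to\Cl(X\setminus B)\to 0
\]
yields $\dim_\qq\Cl_\qq(X)=r-n$. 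Since $B$ is reduced, $|B|=r$, and therefore
\[
c(X,B)=\dim X+\dim_\qq\Cl_\qq(X)-|B|=n+(r-n)-r=0.
\]
By \cite[Theorem 1.2]{BMSZ18}, a log Calabi--Yau pair of complexity zero is a toric log Calabi--Yau pair, which completes the argument.

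The main obstacle is the ``only if'' direction, and within it the decision to route through the complexity rather than attempting to extend the rational torus action on $U$ to a regular action on $X$ directly; the latter is precisely the delicate point that the complexity machinery is designed to circumvent. The technical crux is then the penultimate paragraph: verifying that the torus coordinates extend to global units on $X\setminus B$ with divisor supported on $B$, which rests essentially on $U$ being big open \emph{in both} $X\setminus B$ and $\G_m^n$, together with the reducedness of $B$ forced by the index-one hypothesis.
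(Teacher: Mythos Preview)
Your argument is correct and takes a genuinely different route from the paper.

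The paper's proof of the ``only if'' direction is birational-geometric: from the cluster-type map $\phi\colon(\pp^n,\Sigma^n)\dashrightarrow(X,B)$ with $\phi(\G_m^n\setminus{\rm Ex}(\phi))$ divisorially covering $X\setminus B$, one observes that every $\phi^{-1}$-exceptional divisor lies in $B$, hence is a log canonical place of $(\pp^n,\Sigma^n)$; extracting these yields a toric pair $(T,B_T)\to(\pp^n,\Sigma^n)$ such that $(T,B_T)\dashrightarrow(X,B)$ is a birational contraction, and one concludes via \cite[Lemma~2.3.2]{BMSZ18} that birational contractions preserve the toric property. Your approach instead stays on $X$ and computes the complexity directly: using that $U$ is simultaneously big open in $X\setminus B$ and in $\G_m^n$, you identify $\oo(X\setminus B)^\times/\kk^\times\cong\zz^n$ and $\Cl(X\setminus B)=0$ via Hartogs on the normal variety $X\setminus B$, feed this into the excision sequence to get $\dim_\qq\Cl_\qq(X)=r-n$, and invoke \cite[Theorem~1.2]{BMSZ18}. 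Your route is more elementary in that it avoids constructing the intermediate toric model and the attendant extraction argument, reducing everything to a short exact sequence of abelian groups; the paper's route is perhaps more in the spirit of the surrounding arguments, and makes transparent \emph{which} toric variety $(X,B)$ is a contraction of. One small remark: your injection $\oo(X\setminus B)^\times/\kk^\times\hookrightarrow\oo(U)^\times/\kk^\times$ is in fact an isomorphism by normality of $X\setminus B$, which would let you skip the separate verification that the $x_i$ lie in $\oo(X\setminus B)^\times$.
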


In Example~\ref{ex:large-cluster-type}, 
we show that for every $c\geq 1$
there exists a log Calabi--Yau surface 
$(X,B)$ of cluster type
with $\mathcal{C}(X,B)=c$.
The following theorem states that such a phenomenon does not happen among Fano surfaces.

\begin{theorem}\label{introthm:cluster-Fsurf}
Let $X$ be a Fano surface and
let $(X,B)$ be a log Calabi--Yau pair.
Then we have 
$\mathcal{C}(X,B)\in \{1,2,\infty\}$.
\end{theorem}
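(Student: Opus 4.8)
The plan is to divide into three cases. If $(X,B)$ is not of cluster type, then $\mathcal{C}(X,B)=\infty$ by definition. If $(X,B)$ is a toric log Calabi--Yau pair, then Theorem~\ref{theorem:cluster-type-1} gives $\mathcal{C}(X,B)=1$. The remaining case is when $(X,B)$ is of cluster type but not toric; here Theorem~\ref{theorem:cluster-type-1} already yields $\mathcal{C}(X,B)\geq 2$, so the task reduces to producing two almost tori that divisorially cover $X\setminus B$.

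I would begin by fixing a crepant birational map $\varphi_1\colon(\pp^2,\Sigma^2)\dashrightarrow(X,B)$ with $\mathcal{T}(\varphi_1)=0$, which exists precisely because $(X,B)$ is of cluster type, and set $U_1=\varphi_1(\G_m^2\setminus{\rm Ex}(\varphi_1))$. The crucial local point is to identify the curves of $(X\setminus B)\setminus U_1$. Since $\mathcal{T}(\varphi_1)=0$, no curve meeting the torus is contracted, so a curve $D\subset X\setminus B$ is omitted by $U_1$ only if it is extracted by $\varphi_1$ over a point of $\Sigma^2$. A divisor extracted over a \emph{node} of $\Sigma^2$ has log discrepancy $0$ and lands inside $B$, whereas a divisor extracted over a \emph{smooth} point of $\Sigma^2$ has log discrepancy $1$, so by crepancy its image $D$ satisfies $a(D;X,B)=0$ and therefore $D\subset X\setminus B$. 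Thus the omitted curves $D_1,\dots,D_r$ are exactly the divisors extracted over smooth boundary points; because $X$ is del Pezzo, each $D_i$ is a negative rational curve meeting the boundary cycle $B$, and there are only finitely many of them.

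Next I would obtain the second chart by mutation. The almost tori arising from crepant models with $\mathcal{T}=0$ are related by the cluster mutations of $X\setminus B$, each a volume-preserving birational self-map of $(X,B)$ that permutes these charts. Applying such a mutation $\psi$ to $\varphi_1$ yields a crepant model $\varphi_2=\psi\circ\varphi_1$ with $\mathcal{T}(\varphi_2)=0$ whose omitted curves form the mutated spike set $\psi(\{D_1,\dots,D_r\})$. Since a curve escapes both $U_1$ and $U_2$ only when it is omitted by each, the two charts divisorially cover $X\setminus B$ as soon as their omitted-curve sets are disjoint. I would therefore select a mutation carrying $\{D_1,\dots,D_r\}$ to a disjoint configuration, so that $U_1\cup U_2$ misses only the finitely many intersection points of the two spike sets; this gives $\mathcal{C}(X,B)\leq 2$ and, with the reverse inequality, forces $\mathcal{C}(X,B)=2$ in the non-toric case.

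The heart of the matter, and the step I expect to be hardest, is guaranteeing that some mutation sends the spike set $\{D_1,\dots,D_r\}$ to a disjoint one, uniformly over all del Pezzo surfaces. This is where the Fano hypothesis is indispensable: the bound $\rho(X)\leq 9$ and the finiteness of the $(-1)$-curve configuration constrain the spikes and the orbit of toric models tightly enough that a ``transverse'' chart always exists. By contrast, Example~\ref{ex:large-cluster-type} exhibits log Calabi--Yau surfaces with $\mathcal{C}(X,B)$ arbitrarily large, so no purely formal argument can replace the del Pezzo geometry, and it is exactly this geometry that collapses the possible values of $\mathcal{C}(X,B)$ to $\{1,2,\infty\}$.
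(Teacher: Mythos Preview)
Your overall structure matches the paper: reduce to the case $(X,B)$ cluster type but not toric, fix one cluster-type chart $\varphi_1$, identify the finitely many curves $C_1,\dots,C_k\subset X\setminus B$ omitted by $U_1$ as exactly the $\varphi_1^{-1}$-exceptional curves not contained in $B$, and then seek a second cluster-type chart picking up all of them. (A minor slip: you write $a(D;X,B)=0$ for these curves, but their log discrepancy is $1$; your conclusion $D\subset X\setminus B$ is nonetheless correct.)

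The genuine gap is the construction of the second chart. Your proposal is to take a ``mutation'' $\psi\in{\rm Bir}(X,B)$ and set $\varphi_2=\psi\circ\varphi_1$, asserting that the omitted set of $\varphi_2$ is $\psi(\{C_1,\dots,C_k\})$, and then to choose $\psi$ so that this is disjoint from $\{C_1,\dots,C_k\}$. Neither step is justified. If $\psi$ is genuinely birational (not an automorphism), the exceptional set of $\varphi_2^{-1}$ is not simply the $\psi$-transform of that of $\varphi_1^{-1}$: new curves can appear and old ones can disappear, so the ``permuting spikes'' picture breaks down. And you give no mechanism for producing the required $\psi$; invoking $\rho(X)\leq 9$ or finiteness of $(-1)$-curves is not how the Fano hypothesis enters (and need not hold for singular Fano surfaces anyway).

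The paper proceeds differently. First, it proves a geometric lemma using the Fano hypothesis directly: since $B$ is ample, any two of the curves $C_i$ can intersect only along $B$ (Lemma~\ref{lem:config-curves-outside}). Second, it runs a maximality argument. For any cluster-type map $\varphi$, let $M(\varphi)$ be the number of $C_i$ that are \emph{not} exceptional over the domain of $\varphi$; choose $\psi$ maximizing $M(\psi)$, and suppose $M(\psi)=s<k$. Passing to a suitable toric blow-up $(T,B_T)\to(\pp^2_\psi,\Sigma^2)$ so that $C_{s+1}$ is extracted over a smooth boundary point lying on a section of a toric $\pp^1$-fibration, one performs an elementary transformation: extract $C_{s+1}$, then contract the fiber $F$ through that point. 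If $F$ were one of the $C_i$ ($i\leq s$), then $C_i$ and $C_{s+1}$ would meet in $X\setminus B$, contradicting the lemma; otherwise the resulting pair is again toric and yields a cluster-type map $\xi$ with $M(\xi)\geq s+1$, contradicting maximality. Hence $M(\psi)=k$, the charts $\varphi_1$ and $\psi$ divisorially cover, and $\mathcal{C}(X,B)=2$. This is the argument you would need to supply in place of the unsubstantiated mutation step.
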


The previous theorem states that Fano surfaces of cluster type are quite similar to Fano toric surfaces.
In particular, for every Fano surface $X$ of cluster type, we can write
\[
X=\mathbb{G}_m^2 \cup_\phi \mathbb{G}_m^2 \sqcup_{i=1}^j \mathbb{G}_{m,i} 
\sqcup_{i=1}^k \{x_i\}
\]
where $\phi^*\Omega=\Omega$ with $\Omega=\frac{dt_1\wedge dt_2}{t_1t_2}$
and the $x_1,\dots,x_k\in X$ are closed points.

\subsection*{Acknowledgements}

The authors would like to thank 
Konstantin Loginov, Mirko Mauri, and Burt Totaro, for very useful comments. The first author is partially supported by NSF grant DMS-2054553.

\section{Preliminaries}
In this section, we collect some preliminaries about log Calabi--Yau pairs, birational transformations, and bounded families. 
We work over an algebraically closed field $\kk$ of characteristic zero. 
The least number of generators of a group $G$, will be denoted as $d(G)$.
We denote by $\Sigma^n$ the sum of the coordinate hyperplanes of $\pp^n$. 

\subsection{Log Calabi--Yau pairs}
In this subsection, we recall some basic concepts related to log Calabi--Yau pairs.

\begin{definition}
{\em
A {\em log pair} is a couple $(X,B)$ where $X$ is a normal quasi-projective variety and $B$ is an effective $\qq$-divisor for which $K_X+B$ is $\qq$-Cartier.
Let $\pi\colon Y\rightarrow X$ be a projective birational morphism from a normal variety and
$E\subset Y$ be a prime divisor.
The {\em log discrepancy} of $(X,B)$ at $E$, denoted by $a_E(X,B)$ is the rational number
\[
1-{\rm coeff}_E(B_Y) 
\text{ where }
K_Y+B_Y=\pi^*(K_X+B). 
\]
The log discrepancy $a_E(X,B)$ depends only on the divisorial valuation induced by $E$ on $X$
and not in the model $Y$.
We say that a log pair $(X,B)$ is {\em log canonical} (resp. {\em Kawamata log terminal}) if all its log discrepancies are non-negative (resp. positive).
We abbreviate log canonical (resp. Kawamata log terminal) by lc (resp. klt).
A {\em non-klt place} of $(X,B)$ is a prime divisor $E$ over $X$
for which $a_E(X,B)\leq 0$.
If $(X,B)$ is lc, then the non-klt places are also called log canonical places.
A {\em log canonical center} of $(X,B)$ 
is the image on $X$ of a log canonical place.
}
\end{definition}

\begin{definition}
{\em 
Let $(X,B)$ be a log pair.
We say that $(X,B)$ is {\em canonical} if all its log discrepancies are at least $1$.
A divisor $E$ over $X$ for which $a_E(X,B)=1$ holds is called a {\em canonical place} of the log pair $(X,B)$.
We adopt the previous terminology regardless of whether $(X,B)$ is a canonical pair or not.
}
\end{definition}

\begin{definition}
{\em 
Let $(X,B)$ be a log canonical pair.
We say that $(X,B)$ is {\em divisorially log terminal} (or dlt for short) if there exists an open subset $U\subset X$ satisfying the following conditions:
\begin{enumerate}
    \item the pair $(U,B_U)$ is log smooth,
    \item every log canonical center of $(X,B)$
    intersects $U$ and is a stratum of
    $\lfloor B_U\rfloor$.
\end{enumerate}
}
\end{definition}

\begin{definition}
{\em 
Let $(X,B)$ be a log pair. 
A {\em dlt modification} $p\colon Y\rightarrow X$
of $(X,B)$ is a projective birational morphism
satisfying the following conditions:
\begin{enumerate}
    \item the pair $(Y,B_Y)$ is dlt, where $B_Y={\rm Ex}(p)+p^{-1}_*B$, 
    \item every $p$-exceptional divisor is a non-klt place of $(X,B)$, and 
    \item we have that $K_Y+B_Y$ is nef over $X$, 
\end{enumerate}
In the previous setting, we may also say that
$p\colon (Y,B_Y)\rightarrow (X,B)$ is a {\em dlt modification}.
}
\end{definition}

We recall the following lemma (see, e.g.,~\cite[Theorem 2.9]{FS20}).

\begin{lemma}\label{lem:existence-dlt-mod}
Let $(X,B)$ be a log pair. 
Then, there exists a dlt modification 
$p\colon (Y,B_Y)\rightarrow (X,B)$.
\end{lemma}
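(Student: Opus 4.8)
The plan is to realize $(Y,B_Y)$ as the output of a relative minimal model program run from a log resolution of $(X,B)$, and then to identify this output by means of the negativity lemma. First I would choose a log resolution $f\colon W\to X$ of $(X,B)$, so that $\mathrm{Exc}(f)\cup f^{-1}_*B$ is a simple normal crossing divisor; writing $E_1,\dots,E_r$ for the prime $f$-exceptional divisors, I set
\[
B_W:=f^{-1}_*B+\sum_{j=1}^r E_j,
\]
namely the strict transform of $B$ together with every exceptional divisor taken with coefficient one. Being log smooth, $(W,B_W)$ is $\qq$-factorial dlt, and comparing discrepancies gives
\[
K_W+B_W=f^*(K_X+B)+\sum_{j=1}^r a_j E_j,\qquad a_j:=a_{E_j}(X,B),
\]
so that $K_W+B_W\equiv_X\sum_j a_j E_j$ is supported on the exceptional locus. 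The divisors with $a_j>0$ are the ones to be removed, whereas those with $a_j\le 0$ are exactly the non-klt places to be retained.

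Next I would run a $(K_W+B_W)$-minimal model program over $X$ with scaling of an $f$-ample divisor. Appealing to the fundamental results of the minimal model program for $\qq$-factorial dlt pairs (existence of flips and termination of the program with scaling), this terminates with a relative model $p\colon Y\to X$ on which $K_Y+B_Y$ is nef over $X$, where $B_Y=p^{-1}_*B+\mathrm{Exc}(p)$ is the strict transform of $B_W$. Since each step preserves $\qq$-factoriality and the dlt condition, $(Y,B_Y)$ is $\qq$-factorial dlt with $K_Y+B_Y$ nef over $X$; this yields conditions (1) and (3) of the definition.

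It remains to check condition (2), that every $p$-exceptional divisor is a non-klt place. Let $D_Y$ denote the pushforward to $Y$ of $\sum_j a_j E_j$, so that $K_Y+B_Y=p^*(K_X+B)+D_Y$. Then $D_Y$ is $p$-exceptional and, being numerically equivalent over $X$ to $K_Y+B_Y$, it is also $p$-nef; the negativity lemma therefore forces $D_Y\le 0$. As the coefficient of a surviving exceptional divisor $E_j$ in $D_Y$ is precisely $a_j$, each such divisor satisfies $a_j\le 0$ and is a non-klt place, while every divisor with $a_j>0$ has been contracted. Hence $p\colon(Y,B_Y)\to(X,B)$ is a dlt modification.

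The step I expect to be the main obstacle is the second one: ensuring that the relative program exists and terminates with a minimal model rather than a Mori fibre space. When $(X,B)$ is log canonical all $a_j\ge 0$, so $\sum_j a_j E_j$ is effective and $K_W+B_W$ is pseudo-effective over $X$, placing us within the scope of the cited results after the routine reduction from dlt to klt by mildly perturbing the reduced part of the boundary. For a general log pair one must instead arrange the program to contract only the components of the effective exceptional part $\sum_{a_j>0}a_jE_j$, controlling termination via special termination together with the fact that each divisorial contraction strictly lowers the number of exceptional divisors; the delicate point is to guarantee, once more through the negativity lemma, that no non-klt place is contracted along the way.
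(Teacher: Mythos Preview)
The paper does not actually prove this lemma; it simply records it with a reference to \cite[Theorem~2.9]{FS20}. Your sketch is exactly the standard construction that underlies that reference and its predecessors (Hacon, Fujino, Koll\'ar--Kov\'acs): pass to a log resolution, assign coefficient one to every exceptional divisor, run a $(K_W+B_W)$-MMP over $X$ with scaling, and use the negativity lemma to see that every surviving exceptional divisor has $a_j\le 0$. So your approach agrees with the literature, and your identification of termination as the only substantive input is accurate.

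Two small remarks. First, your assertion that $(W,B_W)$ is dlt because it is log smooth tacitly assumes that every coefficient of $B$ is at most $1$; otherwise $f^{-1}_*B$, and hence $B_W$, has a component of coefficient larger than $1$ and the pair is not even lc. This hypothesis is in any case implicit in the paper's own definition of a dlt modification, since $B_Y=\mathrm{Ex}(p)+p^{-1}_*B$ must itself be a dlt boundary, so the omission is harmless. Second, the termination you flag is handled in the cited references by the usual device: replace $B_W$ by $B_W-\epsilon\lfloor B_W\rfloor$ to obtain a klt pair whose MMP over $X$ coincides step by step with the $(K_W+B_W)$-MMP, and invoke BCHM for termination with scaling of a big klt pair over a base; special termination plus the fact that each divisorial step drops the number of exceptional components is not needed once one makes this reduction.
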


\begin{definition}\label{def:dlt-stratum}
{\em 
Let $(X,B)$ be a log pair.
A {\em dlt stratum} of $(X,B)$ is either:
\begin{itemize}
\item a stratum of $\lfloor B_Y\rfloor$, or \item the variety $Y$, 
\end{itemize}
where $(Y,B_Y)\rightarrow (X,B)$ is a dlt modification of $(X,B)$.
We will refer to the collection of all dlt strata among all dlt modifications of $(X,B)$ as the {\em dlt strata} of $(X,B).$
}
\end{definition} 

We will need the following lemma. 

\begin{lemma}\label{lem:dlt-mod-non-lc}
Let $(X,B)$ be a log canonical pair.
Let $P$ be a divisor on $X$.
For $\epsilon>0$ small enough
a dlt modification of $(X,B+\epsilon P)$
only extracts log canonical places of $(X,B)$.
\end{lemma}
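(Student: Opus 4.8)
The plan is to compare log discrepancies with respect to $(X,B)$ and $(X,B+\epsilon P)$ on a single fixed model, and to arrange that only finitely many divisorial valuations are relevant. Assume $P$ is $\qq$-Cartier, so that $(X,B+\epsilon P)$ is a log pair, and fix a log resolution $f\colon W\to X$ of $(X,B+P)$. Writing $K_W+B_W=f^*(K_X+B)$, for every prime divisor $E$ over $X$ one has the identity
\[
a_E(X,B+\epsilon P)=a_E(X,B)-\epsilon\,\ord_E(f^*P),
\]
since $f^*(K_X+B+\epsilon P)=K_W+B_W+\epsilon f^*P$. If $p\colon(Y,B_Y)\to(X,B+\epsilon P)$ is a dlt modification and $E$ is a $p$-exceptional divisor, then $E$ is a non-klt place of $(X,B+\epsilon P)$, so $a_E(X,B+\epsilon P)\le 0$; as $(X,B)$ is log canonical we also have $a_E(X,B)\ge 0$. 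Hence $0\le a_E(X,B)\le \epsilon\,\ord_E(f^*P)$, and the whole point is to show that for $\epsilon$ small this forces $a_E(X,B)=0$, i.e.\ that $E$ is a log canonical place of $(X,B)$.

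Second, I would reduce to finitely many valuations. The standard construction of a dlt modification --- running a relative minimal model program over $X$ starting from the fixed resolution $W$ --- produces a dlt modification of $(X,B+\epsilon P)$ whose exceptional divisors over $X$ lie among the finitely many prime divisors $E_1,\dots,E_N$ appearing on $W$; since such a program only contracts divisors, it never introduces valuations outside this list. It therefore suffices to choose $\epsilon$ so that none of the $E_i$ with $a_{E_i}(X,B)>0$ becomes a non-klt place of $(X,B+\epsilon P)$. Setting
\[
\epsilon_0=\min\left\{\frac{a_{E_i}(X,B)}{\ord_{E_i}(f^*P)} \ \middle|\ a_{E_i}(X,B)>0,\ \ord_{E_i}(f^*P)>0\right\},
\]
a minimum over a finite set and hence strictly positive, one gets for all $0<\epsilon<\epsilon_0$ and all $i$ with $a_{E_i}(X,B)>0$ that $a_{E_i}(X,B)>\epsilon\,\ord_{E_i}(f^*P)$, so $a_{E_i}(X,B+\epsilon P)>0$ and $E_i$ is not extracted. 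Consequently the only divisors extracted by this dlt modification are those $E_i$ with $a_{E_i}(X,B)=0$, which are exactly log canonical places of $(X,B)$.

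The main obstacle is the reduction step that confines the extracted divisors to a fixed finite set of valuations. Note that $(X,B+\epsilon P)$ is typically not log canonical: whenever $\Supp P$ meets a log canonical center of $(X,B)$ there are divisors $F$ over $X$ with $a_F(X,B+\epsilon P)<0$, and many of these satisfy $a_F(X,B)>0$ and so are \emph{not} log canonical places of $(X,B)$. The content of the lemma is precisely that such $F$ need not be extracted: the definition of a dlt modification only requires that its exceptional divisors be non-klt places, not that every non-klt place be extracted. Thus the delicate point is to invoke a construction of the dlt modification --- via a relative MMP from the chosen log resolution --- for which the extracted valuations are guaranteed to come from $W$, so that the uniform threshold $\epsilon_0$ applies to all of them simultaneously. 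A minor bookkeeping point, to be settled at the outset, is to take $\epsilon$ small enough that $B+\epsilon P$ is a genuine boundary with coefficients at most $1$, equivalently that no component of $P$ is a divisorial log canonical center of $(X,B)$; otherwise $B+\epsilon P$ acquires a coefficient exceeding $1$ along that component and no dlt modification exists in the strict sense.
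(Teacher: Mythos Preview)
Your proof is correct and follows the same approach as the paper: fix a log resolution of $(X,B+P)$ and use that log discrepancies depend continuously on the boundary, so that for the finitely many exceptional divisors $E$ on that resolution one has $a_E(X,B+\epsilon P)<0$ only when $a_E(X,B)=0$ once $\epsilon$ is small. Your version is in fact more careful than the paper's. The paper's proof ends with the sentence ``Thus, every non-klt place of $(X,B+\epsilon P)$ is a log canonical place of $(X,B)$,'' which is false in general: whenever $\Supp P$ meets a log canonical center of $(X,B)$, repeatedly blowing up over that intersection produces valuations $v$ with $a_v(X,B)>0$ fixed but $\ord_v(P)$ arbitrarily large, hence $a_v(X,B+\epsilon P)<0$ for any given $\epsilon>0$. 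You correctly isolate what is actually needed---that a dlt modification built by running an MMP from the fixed log resolution extracts only divisors already present on that resolution---and your explicit threshold $\epsilon_0$ then handles exactly those finitely many valuations.
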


\begin{proof}
Let $q\colon Z\rightarrow X$ be a log resolution
of $(X,B+P)$.
Log discrepancies are continuous with respect to the boundary divisor.
Hence, for $\epsilon>0$ small enough, for every $q$-exceptional divisor $E$,
$a_E(X,B+\epsilon P)<0$ if and only if 
$a_E(X,B)=0$.
Thus, every non-klt place of $(X,B+\epsilon P)$
is a log canonical place of $(X,B)$.
\end{proof}

\begin{definition}
{\em 
A projective log pair $(X,B)$ is said to be {\em log Calabi--Yau} if $K_X+B\equiv 0$ and $(X,B)$ is lc.
}
\end{definition}

For a log Calabi--Yau pair $(X,B)$ 
it is known that $K_X+B\sim_\qq 0$ (see, e.g.,~\cite[Theorem 1.5]{FG14}).

\begin{definition}
{\em 
The {\em index} of a log Calabi--Yau pair $(X,B)$ is the smallest positive integer $m$ for which $m(K_X+B)\sim 0$.
}
\end{definition}

\begin{definition}
{\em 
Let $(X,B)$ and $(Y,B_Y)$ be two log pairs
and let $\pi\colon X\dashrightarrow Y$ be a birational map.
We say that $\pi$ is {\em crepant} for $(X,B)$ and $(Y,B_Y)$ if the following condition holds.
There exists a common resolution
$p\colon Z\rightarrow X$
and 
$q\colon Z\rightarrow Y$
for which $p^*(K_X+B)=q^*(K_Y+B_Y)$.
In this case, we also say that
$\pi\colon (X,B)\rightarrow (Y,B_Y)$ is a {\em crepant birational map}
and that
$(Y,B_Y)$ is {\em crepant birational equivalent} to $(X,B)$.
In the previous setting, we write
$(X,B)\simeq_{\rm bir} (Y,B_Y)$.
}
\end{definition}

If $(X,B)$ is a log Calabi--Yau pair,
then every log pair $(Y,B_Y)$ which is crepant birational equivalent to $(X,B)$ is also log Calabi--Yau. Furthermore, they have the same index (see, e.g.~\cite[Lemma 3.1]{FMM22}).

\begin{definition}
{\em 
Let $(X,B)$ be a log Calabi--Yau pair of dimension $n$.
The {\em coregularity} of $(X,B)$, denoted by ${\rm coreg}(X,B)$, is the dimension of the smallest log canonical center in any dlt modification of $(X,B)$.
If $(X,B)$ is klt, then we set ${\rm coreg}(X,B)=n$.
Thus, we have 
${\rm coreg}(X,B)\in \{0,\dots,n\}$.
}
\end{definition}

We finish this subsection by collecting some lemmata.
The following lemma is a straightforward application of the connectedness of log canonical centers, see, e.g.,~\cite[Theorem 1.2]{FS20}.

\begin{lemma}\label{lem:contraction-S^1}
Let $(X,B)$ be a log Calabi--Yau surface with ${\rm coreg}(X,B)=0$
and $K_X+B\sim 0$.
Let $p\colon X\rightarrow Y$ be a projective birational morphism contracting the irreducible curve $C$. 
Assume that the curve $C$ is not contained in $B$.
Then, the curve $C$ intersects $B$ in at most one point.
\end{lemma}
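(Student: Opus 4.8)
The plan is to deduce the statement directly from the connectedness of the non-klt locus applied to the contraction $p$ itself. First I would record that the boundary is reduced. Since $(X,B)$ is log canonical, every coefficient of $B$ lies in $[0,1]$; on the other hand $K_X+B\sim 0$ forces $K_X+B$ to be an integral Weil divisor, and as $K_X$ is integral, each coefficient of $B$ must be an integer. Hence every coefficient of $B$ equals $0$ or $1$, so $B=\lfloor B\rfloor$ and in particular $\Supp(B)\subseteq {\rm Nklt}(X,B)$, the non-klt locus of $(X,B)$.

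Next I would set up the connectedness input. Write $y:=p(C)\in Y$. Because $p$ is birational and contracts the single irreducible curve $C$, it is an isomorphism away from $C$, so $p^{-1}(y)=C$ (which is connected) and $p_*\oo_X=\oo_Y$. Since $K_X+B\sim 0$, the divisor $-(K_X+B)\sim 0$ is $p$-nef, and it is $p$-big because $p$ is birational (its generic fibre is a point). These are exactly the hypotheses of the connectedness theorem~\cite[Theorem 1.2]{FS20}, which I would apply to $p\colon X\to Y$ to conclude that ${\rm Nklt}(X,B)\cap p^{-1}(y)={\rm Nklt}(X,B)\cap C$ is connected.

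The final step is to observe that this intersection is finite. Since $C\not\subseteq B$, the generic point of $C$ has coefficient $0$ in $B$, so $a_C(X,B)=1>0$ and $C$ is not a non-klt place; therefore $C\not\subseteq {\rm Nklt}(X,B)$, and the closed set ${\rm Nklt}(X,B)\cap C$ is a proper closed subset of the irreducible curve $C$, i.e.\ a finite set of closed points. A finite set is connected in the Zariski topology only if it has at most one point. Combining this with the inclusion $C\cap B\subseteq {\rm Nklt}(X,B)\cap C$ from the first paragraph yields that $C$ meets $B$ in at most one point.

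I expect the only delicate point to be the verification that the connectedness theorem applies in the precise form needed: namely, confirming that $-(K_X+B)$ is nef and big over $Y$ (immediate here from $K_X+B\sim 0$ and the birationality of $p$) and that $p^{-1}(y)$ equals $C$, so that the connected set the theorem produces is genuinely ${\rm Nklt}(X,B)\cap C$. The hypothesis ${\rm coreg}(X,B)=0$ is not essential to this argument; it pins down the geometric setting and guarantees $B\neq 0$, but once $B$ is known to be reduced the conclusion follows purely from connectedness.
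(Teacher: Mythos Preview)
Your argument is correct and is precisely the ``straightforward application of connectedness'' that the paper alludes to: the paper does not spell out a proof but simply cites \cite[Theorem 1.2]{FS20}, and you have supplied the missing details (reducedness of $B$ from $K_X+B\sim 0$, the verification that $-(K_X+B)$ is $p$-nef and $p$-big, and the passage from connectedness of ${\rm Nklt}(X,B)$ near the fibre to finiteness of $B\cap C$). One small remark: the classical statement gives connectedness of ${\rm Nklt}(X,B)$ in a neighbourhood of $p^{-1}(y)$ rather than literally of ${\rm Nklt}(X,B)\cap C$, but the two are equivalent here by properness, so your conclusion stands.
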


The following lemma is known to the experts.

\begin{lemma}\label{lem:factoring-contraction}
    Let $(X,B)$ be a log Calabi--Yau surface of index one. Let $f\colon X\rightarrow Y$ be a birational contraction. Then we can factor $f$ as $h\circ g$ where $g$ is a birational contraction that only contracts canonical places of $(X,B)$ and $h$ is a birational contraction that only contracts log canonical places of $(X,B)$.
\end{lemma}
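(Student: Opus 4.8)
The plan is to exploit the index-one hypothesis to turn the dichotomy "canonical place versus log canonical place" into an exhaustive one for the prime divisors of $X$, and then to contract the canonical ones before the log canonical ones.

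First I would record the crucial consequence of index one. Since $K_X+B\sim 0$ is an \emph{integral} linear equivalence and $K_X$ is an integral Weil divisor, the divisor $B=(K_X+B)-K_X$ is integral; being effective and lc, all of its coefficients lie in $\{0,1\}$, so $B$ is reduced. Consequently, for every prime divisor $E$ lying on $X$ we have $a_E(X,B)=1-\operatorname{coeff}_E(B)\in\{0,1\}$, so $E$ is a canonical place when $E\not\subseteq\operatorname{Supp}(B)$ and a log canonical place when $E\subseteq\operatorname{Supp}(B)$. In particular every $f$-exceptional prime divisor is either a canonical place or a log canonical place of $(X,B)$, with no intermediate possibility.

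Write $\operatorname{Ex}(f)=\Theta+\Gamma$, where $\Theta$ is the reduced sum of the $f$-exceptional prime divisors \emph{not} contained in $B$ (the canonical places) and $\Gamma$ is the reduced sum of those contained in $B$ (the log canonical places). Next I would contract $\Theta$ first. The curves in $\Theta$ form a subconfiguration of the $f$-exceptional locus, so their intersection matrix is negative definite; hence $\Theta$ can be contracted by a projective birational morphism $g\colon X\to Z$ onto a normal surface contracting exactly the components of $\Theta$. In MMP language one may instead run the $(K_X+B+\epsilon\Theta)$-MMP over $Y$ for $0<\epsilon\ll 1$: since $K_X+B\equiv_Y 0$, the only curves of negative degree are components of $\Theta$, so $\Gamma$ is untouched, and the program terminates precisely when $\Theta$ has been contracted, because a nonzero effective $f$-exceptional divisor is never relatively nef (negativity lemma). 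Since each connected component of $\Theta$ is contracted by $f$ to a point, the morphism $f$ is constant on the fibers of $g$, and by the universal property of the contraction $g$ we obtain a projective birational morphism $h\colon Z\to Y$ with $f=h\circ g$. Then $g$ contracts exactly the divisors in $\Theta$, all canonical places of $(X,B)$, while $h$ contracts exactly the divisors $g_*\Gamma$, whose divisorial valuations agree with those of $\Gamma$ and are therefore log canonical places of $(X,B)$.

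The step I expect to be the main obstacle is the construction of the intermediate contraction $g\colon X\to Z$ together with the factorization $f=h\circ g$: one must contract precisely the canonical-place curves $\Theta$ while \emph{preserving} the boundary curves $\Gamma$, and one must check that the resulting $Z$ is normal and projective so that $g$ and $h$ are genuine birational contractions. Both the contractibility criterion for negative-definite configurations and the relative MMP above furnish this; if one prefers the MMP route, some care is needed regarding the $\mathbb{Q}$-factoriality required to run the program, which can be arranged by a preliminary small $\mathbb{Q}$-factorialization or avoided altogether by invoking the contractibility criterion directly.
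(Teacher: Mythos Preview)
The paper states this lemma without proof, noting only that it is ``known to the experts,'' so there is no argument in the paper to compare against. Your proposal is correct: the index-one hypothesis forces every prime divisor on $X$ to have log discrepancy in $\{0,1\}$, and once the $f$-exceptional set is split as $\Theta+\Gamma$ accordingly, contracting the negative-definite subconfiguration $\Theta$ over $Y$ yields the desired intermediate model. One small remark on your MMP variant: the pair $(X,B+\epsilon\Theta)$ need not be lc if $\Theta$ meets a zero-dimensional lc center of $(X,B)$, so the formal relative MMP may require a preliminary dlt modification rather than a $\mathbb{Q}$-factorialization (which, on a surface, would be an isomorphism anyway); your negative-definiteness route avoids this issue entirely and is the cleaner justification.
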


\subsection{Birational complexity}
In this subsection, we recall the concepts of complexity and birational complexity.

\begin{definition}
{\em 
Let $(X,B)$ be a log pair.
The {\em complexity} of $(X,B)$,
denoted by $c(X,B)$, is defined to be
\[
\dim X + \dim {\rm Cl}_\qq(X) - |B|,
\]
where $|B|$ is the sum of the coefficients of $B$.
}
\end{definition}

The complexity of a log Calabi--Yau pair
is non-negative. 
Furthermore, if the complexity is zero, then
$(X,\lfloor B\rfloor)$ is toric (see~\cite[Theorem 1.2]{BMSZ18}).

\begin{definition}
{\em 
Let $(X,B)$ be a log Calabi--Yau pair. 
The {\em birational complexity} of $(X,B)$,
denoted by $c_{\rm bir}(X,B)$, is defined to be:
\[
\min\{ c(Y,B_Y) \mid 
(Y,B_Y)\simeq_{\rm bir}(X,B) 
\}.
\]
}
\end{definition}

The minimum in the previous definition exists due to \cite[Lemma 2.31]{MM24}.

The following theorem is due to Mauri and the third author (see~\cite[Theorem 1.6]{MM24}).

\begin{theorem}
Let $(X,B)$ be a log Calabi--Yau pair
of dimension $n$ and index one.
We have $c_{\rm bir}(X,B)=0$
if and only if
$(X,B)\simeq_{\rm bir}(\pp^n,\Sigma^n)$.
\end{theorem}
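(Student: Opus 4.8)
The plan is to prove the two implications separately, with the reverse direction being essentially a computation and the forward direction carrying the geometric content. For the reverse direction, I would first note that crepant birational equivalence $\simeq_{\rm bir}$ is an equivalence relation, so that the birational complexity, being a minimum over a whole crepant birational equivalence class, is a crepant birational invariant. Hence $c_{\rm bir}(X,B)=c_{\rm bir}(\pp^n,\Sigma^n)$ whenever $(X,B)\simeq_{\rm bir}(\pp^n,\Sigma^n)$, and it suffices to show $c_{\rm bir}(\pp^n,\Sigma^n)=0$. Since $\dim{\rm Cl}_\qq(\pp^n)=1$ and $|\Sigma^n|=n+1$, a direct computation gives $c(\pp^n,\Sigma^n)=n+1-(n+1)=0$; together with the non-negativity of the complexity of a log Calabi--Yau pair this yields $c_{\rm bir}(\pp^n,\Sigma^n)=0$.

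For the forward direction, assume $c_{\rm bir}(X,B)=0$. By definition of the birational complexity there is a crepant birational model $(Y,B_Y)\simeq_{\rm bir}(X,B)$ with $c(Y,B_Y)=0$, and this model is again log Calabi--Yau of index one, since crepant birational maps preserve both properties. By the complexity-zero theorem of~\cite{BMSZ18}, the pair $(Y,\lfloor B_Y\rfloor)$ is toric, so $Y$ is a toric variety and $\lfloor B_Y\rfloor$ is the reduced toric boundary $\partial Y=\sum_{i=1}^N D_i$, where $N$ is the number of torus-invariant prime divisors. The next step is to upgrade $\lfloor B_Y\rfloor$ to $B_Y$ itself. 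Using the standard toric exact sequence $0\to M\to \mathbb{Z}^N\to {\rm Cl}(Y)\to 0$, where $M$ is the character lattice, one has $N=\dim Y+\dim{\rm Cl}_\qq(Y)$, so the condition $c(Y,B_Y)=0$ reads $|B_Y|=N$. Since $B_Y\geq \lfloor B_Y\rfloor=\partial Y$ and $|\partial Y|=N$, all coefficients of $B_Y$ are forced to equal one, whence $B_Y=\partial Y$ is the reduced toric boundary.

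It remains to connect $(Y,\partial Y)$ to $(\pp^n,\Sigma^n)$. Both pairs contain a common open torus $\mathbb{G}_m^n$, and the identity on this torus extends to a toric birational map $\psi\colon (\pp^n,\Sigma^n)\dashrightarrow (Y,\partial Y)$. The crucial point is that any such toric birational map is crepant: the logarithmic volume form $\Omega=\frac{dt_1\wedge\cdots\wedge dt_n}{t_1\cdots t_n}$ generates $K+\partial$ on either toric pair and is preserved by monomial transformations, so on a common toric resolution the pullbacks of $K_{\pp^n}+\Sigma^n$ and $K_Y+\partial Y$ coincide. Composing $\psi$ with the crepant equivalence $(Y,\partial Y)\simeq_{\rm bir}(X,B)$ then produces the desired crepant birational map $(\pp^n,\Sigma^n)\dashrightarrow (X,B)$.

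I expect the main obstacle to lie in the two toric reductions: upgrading $(Y,\lfloor B_Y\rfloor)$ to $(Y,B_Y)$ via the coefficient count, and verifying that the toric birational map is genuinely crepant with matching reduced boundaries. Once the toric structure provided by~\cite{BMSZ18} is in hand these amount to bookkeeping with the combinatorial description of toric log discrepancies, and the index-one hypothesis is automatically consistent, since the reduced toric boundary forces $K_Y+\partial Y\sim 0$ and crepant equivalence preserves the index. The genuine depth of the statement is concentrated in the complexity-zero characterization, which I am taking as given.
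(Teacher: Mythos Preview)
Your argument is correct. Note, however, that the paper does not give its own proof of this statement: it is quoted as a result of Mauri and the third author, with a reference to \cite[Theorem 1.6]{MM24}, so there is no in-paper proof to compare against. Your route---computing $c(\pp^n,\Sigma^n)=0$ for the reverse direction, and for the forward direction invoking \cite{BMSZ18} to obtain a toric model $(Y,\lfloor B_Y\rfloor)$, using the coefficient count $|B_Y|=N=|\partial Y|$ via the toric exact sequence to force $B_Y=\partial Y$, and then observing that any toric birational map between $(\pp^n,\Sigma^n)$ and $(Y,\partial Y)$ is crepant---is the natural one and is presumably close to the original argument in \cite{MM24}. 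One small remark: your forward direction never actually uses the index-one hypothesis, and indeed it shows that $c_{\rm bir}(X,B)=0$ already forces index one, since $(Y,\partial Y)$ has $K_Y+\partial Y\sim 0$ and crepant equivalence preserves the index.
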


The previous theorem states that
a log Calabi--Yau pair
admits a birational toric model 
if and only if
it has birational complexity zero.

\subsection{Torus exceptional degree}
\label{subsec:ted}
In this subsection, we introduce the concept of torus exceptional degree and related invariants. 

\begin{definition}\label{def:ted}
{\em 
Let $(X,B)$ be a log Calabi--Yau pair.
Let $\phi\colon (\pp^n,\Sigma^n)\dashrightarrow (X,B)$
be a crepant birational map.
The {\em torus exceptional degree}
of $\phi$ is defined to be
\[
\mathcal{T}(\phi):=
\deg_{\pp^n}\left(
\overline{{\rm Ex}^1(\phi)|_{\mathbb{G}_m^n}}
\right) 
\]
where ${\rm Ex}^1(\phi)$ denotes
the reduced sum of the exceptional divisors of $\phi$.
The torus exceptional degree is a non-negative integer.
}
\end{definition}

For instance, for every toric birational map
$\phi\colon (\pp^n,\Sigma^n)\dashrightarrow (T,B_T)$ we have $\mathcal{T}(\phi)=0$ although the exceptional locus of $\phi$ may contain the hyperplane coordinates as exceptional divisors. Indeed, in this case, we have that $\phi|_{\mathbb{G}_m^n}$ is an isomorphism.

\begin{definition}
{\em 
Let $(X,B)$ be a log Calabi--Yau pair.
The {\em torus exceptional degree} of $(X,B)$ is defined to be
\[
\mathcal{T}(X,B):=\min\{\mathcal{T}(\phi)\mid 
\phi\colon (\pp^n,\Sigma^n)\dashrightarrow (X,B)
\text{ is a crepant birational map}
\}.
\]
If the set in the previous definition is empty,
then we set $\mathcal{T}(X,B)=\infty$.
}
\end{definition}

Given a crepant birational map
$\phi\colon (\pp^n,\Sigma^n)\dashrightarrow (X,B)$, we can produce crepant birational maps to $(X,B)$
of arbitrarily large torus exceptional degree by precomposing with Cremona transformations. 
On the other hand, finding crepant birational maps 
with small torus exceptional degree is related to 
proving affine boundedness of the pairs $(X,B)$.
This is why we are mostly concerned with minimizing this invariant. 
The torus exceptional degree $\mathcal{T}(\phi)$ is minimized precisely when $X\setminus B$ admits an affine open which is isomorphic to a big open subset of an algebraic torus. This motivates the following definition.

\begin{definition}\label{def:almost-tori}
{\em 
An {\em almost torus}
is a big open subset of an algebraic torus $\mathbb{G}_m^n$.
}
\end{definition}

If $(X,B)$ is a log Calabi--Yau pair with
$\mathcal{T}(X,B)=0$, then 
there is an open embedding
$U\hookrightarrow X\setminus B$
of an almost torus $U$.
Now, we turn to introduce some definitions 
and invariants 
that allow us to understand how far is
$X\setminus B$ from being covered by almost tori.

\begin{definition}\label{def:div-cover}
{\em
Let $X$ be an algebraic variety.
Let $\{U_i\}_{i\in I}$ be a finite set of 
 open subvarieties $U_i\subset X$.
We say that the set $\{U_i\}_{i\in I}$ {\em divisorially covers} $X$ if 
\[
{\rm codim}_X(X\setminus\cup_{i\in I}U_i)\geq 2.
\]
}
\end{definition}

It is clear that a set that divisorially covers $X$ admits a finite subset that divisorially covers $X$.

\begin{definition}\label{def:div-cover-logCY}
{\em 
Let $(X,B)$ be a log Calabi--Yau pair.
Let $\{\phi_i\}_{i\in I}$
be a set 
of crepant birational maps 
$\phi_i\colon (\pp^n,\Sigma^n)\dashrightarrow (X,B)$.
We say that the set
$\{\phi_i\}_{i\in I}$
{\em divisorially covers} $(X,B)$ 
if the open subvarieties 
$\phi_i(\mathbb{G}_m^n\setminus {\rm Ex}(\phi_i))$ divisorially cover $X\setminus B$.
In the previous setting, we also say that
$(X,B)$ is {\em divisorially covered}
by the crepant birational maps
$\{\phi_i\}_{i\in I}$.
}
\end{definition}

\begin{definition}
{\em 
Let $(X,B)$ be a log Calabi--Yau pair.
The {\em total torus exceptional degree} 
is defined to be
\[
\mathcal{T}_t(X,B):=
\min
\left\{ 
\sum_{i\in I} \mathcal{T}(\phi) 
\,\middle|\,
\text{
the crepant birational maps
$\{\phi_i\}_{i\in I}$ 
divisorially cover $(X,B)$
}
\right\}.
\]
}    
\end{definition}

The following lemma will be used in Example~\ref{ex:total-ted-unbounded} to show that there are examples of log Calabi--Yau surfaces $(X,B)$ for which $\mathcal{T}_t(X,B)$ is finite but arbitrarily large.

\begin{lemma}\label{lem:rank-inf}
Let $X$ be a rational projective variety.
Let $\{D_i\}_{i\in \nn}$ be an infinite sequence of prime divisors. Set
\[
r_k:=\min 
\left\{ 
\dim H_1\left(X^{\rm reg}\setminus \cup_{i\in I} D_i,\qq \right) 
\mid 
I\subset \nn 
\text{ and }
|I|=k 
\right\}. 
\]
Then, we have $\lim_{k\rightarrow \infty}r_k=\infty$.
\end{lemma}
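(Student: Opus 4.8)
The plan is to bound $\dim_\qq H^1\big(X^{\rm reg}\setminus\bigcup_{i\in I}D_i,\qq\big)$ from below using the localization (cohomology–with–supports) sequence, and then to exploit the fact that $H^2(X^{\rm reg},\qq)$ is a \emph{fixed} finite‑dimensional space: the cycle classes of a large number of distinct divisors must satisfy many $\qq$‑linear relations, and each relation produces a class in the $H^1$ of the complement. Since $\dim_\qq H_1(U,\qq)=\dim_\qq H^1(U,\qq)$ for any finite‑type variety $U$ (universal coefficients over the field $\qq$), it suffices to bound $H^1$ from below.

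First I would set $V:=X^{\rm reg}$, a smooth connected variety over $\cc$ (connected because $X$ is irreducible), and record that $\dim_\qq H^j(V,\qq)<\infty$ for all $j$. As $X_{\mathrm{sing}}$ has only finitely many codimension‑one components, at most finitely many of the $D_i$ — say those in a finite set $I_0$ — are contained in $X_{\mathrm{sing}}$; for $i\notin I_0$ the intersection $D_i\cap V$ is a nonempty irreducible divisor of $V$, and distinct indices give distinct such divisors (take closures in $X$). Fixing $I\subset\nn$ with $|I|=k$, I put $Z:=\bigcup_{i\in I\setminus I_0}(D_i\cap V)$, a closed subset of $V$ of pure codimension one with exactly $k':=|I\setminus I_0|\ge k-|I_0|$ irreducible components, and note $U:=V\setminus Z=X^{\rm reg}\setminus\bigcup_{i\in I}D_i$, since the $D_i$ with $i\in I_0$ miss $V$.

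The key step is the long exact sequence
\[
H^1_Z(V,\qq)\to H^1(V,\qq)\to H^1(U,\qq)\to H^2_Z(V,\qq)\xrightarrow{\ \gamma\ }H^2(V,\qq).
\]
Because $V$ is smooth and $Z$ has pure codimension one, cohomological purity gives $H^1_Z(V,\qq)=0$ and a canonical identification $H^2_Z(V,\qq)\cong\bigoplus_{a=1}^{k'}\qq$ indexed by the components of $Z$, under which $\gamma$ sends the $a$‑th generator to the cycle class of the corresponding divisor. Exactness yields $\dim_\qq H^1(U,\qq)\ge\dim_\qq\ker\gamma\ge k'-\dim_\qq H^2(V,\qq)$, whence
\[
r_k\ \ge\ k-|I_0|-\dim_\qq H^2(X^{\rm reg},\qq).
\]
As $|I_0|$ and $\dim_\qq H^2(X^{\rm reg},\qq)$ are constants independent of $k$, this forces $r_k\to\infty$.

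The only genuinely delicate point is the validity of purity and of the support sequence in the presence of singularities of both $X$ and the $D_i$: this is exactly why I work on the smooth locus $V=X^{\rm reg}$, where purity applies, and allow $Z$ to be an arbitrary reduced closed subset of pure codimension one, for which the identification $H^2_Z(V,\qq)\cong\qq^{\#\mathrm{components}}$ with $\gamma$ the cycle‑class map still holds. If one prefers to avoid cohomology with supports, an equivalent route is to fix a resolution $\pi\colon\widetilde X\to X$ that is an isomorphism over $X^{\rm reg}$, so that $X^{\rm reg}$ is an open subvariety of the smooth projective rational variety $\widetilde X$ whose complement is the exceptional locus; removing the codimension‑$\ge 2$ part of that locus does not affect $H^1$, so one may replace $X^{\rm reg}$ by $\widetilde X\setminus F$ for a fixed divisor $F$, take strict transforms $\widetilde D_i$, and run the usual Gysin sequence on $\widetilde X$, where rationality gives $H^1(\widetilde X,\qq)=0$ and the same linear‑algebra bound holds with $\dim_\qq H^2(\widetilde X,\qq)$ in place of $\dim_\qq H^2(X^{\rm reg},\qq)$.
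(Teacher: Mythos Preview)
Your argument is correct. The localization sequence on $V=X^{\rm reg}$ together with the Alexander duality identification $H^j_Z(V,\qq)\cong H^{BM}_{2n-j}(Z,\qq)$ gives exactly the vanishing $H^1_Z(V,\qq)=0$ and the isomorphism $H^2_Z(V,\qq)\cong\qq^{k'}$ for a pure codimension-one closed subset $Z$, with no smoothness hypothesis on $Z$; the resulting bound $r_k\ge k-|I_0|-\dim_\qq H^2(X^{\rm reg},\qq)$ does the job.

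Your route differs from the paper's. The paper uses the rationality hypothesis to choose a hypersurface $H\subset\pp^n$ with $\pp^n\setminus H\hookrightarrow X^{\rm reg}$, reduces to bounding $b_1$ of a hypersurface complement in $\pp^n\setminus H$, cites \cite[Proposition~4.1.3]{Dim92} for the growth of that Betti number, and then controls the discrepancy by the fixed number $\ell$ of divisorial components of $X^{\rm reg}\setminus(\pp^n\setminus H)$. Your argument is more self-contained (it reproves the relevant content of Dimca's statement via the Gysin sequence rather than citing it) and, notably, does not use rationality at all: the only input is that $\dim_\qq H^2(X^{\rm reg},\qq)<\infty$, which holds for any complex algebraic variety. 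So you have in fact proved a slightly stronger statement than the lemma as written. The paper's reduction to $\pp^n$ buys a cleaner picture if one already knows the cited result, but your direct approach makes the mechanism (too many divisor classes in a fixed finite-dimensional $H^2$ must be linearly dependent, and each relation contributes to $H^1$ of the complement) completely transparent.
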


\begin{proof}
Let $\phi\colon \pp^n\dashrightarrow X$ be a birational map.
We can find a hypersurface $H\subset \pp^n$ such that $\phi$ restricts to an embedding
$\pp^n \setminus H\hookrightarrow X$.
Let $P_1,\dots,P_\ell$ be the prime components of 
$X^{\rm reg}\setminus \phi(\pp^n\setminus H)$. 
Set 
\[
s_k:=\min \left\{ 
\dim H_1 \left( 
(\pp^n \setminus H)\setminus 
\cup_{i\in I} \phi^*D_i, \qq \right)  
\mid 
I\subset \nn 
\text{ and }
|I| = k
\right\}.
\]
By~\cite[Proposition 4.1.3]{Dim92}, we know that $\lim_{k\rightarrow \infty} s_k =\infty$.
On the other hand, for every finite subset 
$I\subset \nn$, we have 
\[
\dim H_1\left( 
X^{\rm reg}\setminus \cup_{i\in I} D_i, \qq
\right) 
- 
\dim H_1 \left( 
(\pp^n \setminus H)\setminus 
\cup_{i\in I} \phi^*D_i, \qq
\right)  
\leq \ell. 
\]
Then, the statement follows.
\end{proof}

\subsection{Boundedness}
In this subsection, we prove the following lemma related to affinely bounded families of normal varieties.

\begin{lemma}\label{lem:affine-bounded-pi_1}
Let $\mathcal{F}$ be a family of normal varieties which is affinely bounded.
Then, there exists a constant $r:=r(\mathcal{F})$, only depending on $\mathcal{F}$, satisfying the following.
Let $X\in \mathcal{F}$, then $d(\pi_1(X^{\rm reg}))\leq r$.
\end{lemma} 

\begin{proof}
Let $\mathcal{A}\rightarrow T$ be a bounded family of affine varieties such that for every $X\in \mathcal{F}$ there is an open embedding
$\mathcal{A}_t\hookrightarrow X$.
Then, the family $\mathcal{A}^{\rm reg}:=\{U^{\rm reg} \mid U\in \mathcal{A}\}$ is also bounded.
Let $\mathcal{A}^{\rm reg} \rightarrow W$ be its bounding family.
By Verdier Theorem~\cite[Corollaire 5.1]{Ver76}, we can stratify the morphism $\mathcal{A}^{\rm reg}\rightarrow W$ into finitely many locally trivial fibrations 
$\mathcal{A}^{\rm reg}_i \rightarrow W_i$.
We conclude that the family of groups
$\{ \pi_1(\mathcal{A}^{\rm reg}_t)/ \simeq\}_{ t\in T}$ is a finite set.

In particular, $d(\pi_1(\mathcal{A}^{\rm reg}_t))$ is bounded above in terms of $\mathcal{F}$, meaning that $d(\pi_1(\mathcal{A}^{\rm reg}_t))\leq r(\mathcal{F})$ holds for every $t\in T$.
Let $X\in \mathcal{F}$. Then there is an open embedding $\mathcal{A}_t \hookrightarrow X$ for some $t\in T$.
Therefore, we have a surjective homomorphism
\[
\pi_1(\mathcal{A}^{\rm reg}_t) \rightarrow \pi_1(X^{\rm reg}).
\]
Thus, we have $d(\pi_1(X^{\rm reg}))\leq r(\mathcal{F})$.
\end{proof}

The previous lemma will be used in Example~\ref{ex:bir-bound-not-aff-bound}
to show an example of a birationally
bounded family
that is not affinely bounded.

\subsection{Cluster type}
In this subsection, we introduce the concept of cluster type log Calabi--Yau pairs and prove a lemma.

\begin{definition}
{\em  
Let $\phi\colon (\pp^n,\Sigma^n)\dashrightarrow(X,B)$ be a crepant birational map.
We say that $\phi$ is {\em of cluster type}
if $\mathcal{T}(\phi)=0$.
Equivalently, a crepant birational map
is said to be of cluster type
if it induces an embedding of an almost torus
in $X\setminus B$.
}
\end{definition}

\begin{definition}
{\em 
Let $(X,B)$ be a log Calabi--Yau pair.
We say that $(X,B)$ is {\em of cluster type}
if $\mathcal{T}(X,B)=0$.
In other words, a log Calabi--Yau pair
$(X,B)$ is said to be 
{\em of cluster type} 
if there exists 
a crepant birational map
$\phi\colon (\pp^n,\Sigma^n)\dashrightarrow(X,B)$
which is of cluster type.
}
\end{definition}

\begin{definition}
{\em 
Let $(X,B)$ be a log Calabi--Yau pair.
The {\em cluster type} of $(X,B)$, denoted by $\mathcal{C}(X,B)$, is the least number of almost tori that divisorially cover $X\setminus B$.
If $X\setminus B$ is not divisorially covered by almost tori, then we set $\mathcal{C}(X,B)=\infty$.
}    
\end{definition}

Following the previous definitions, we may say that a log Calabi--Yau pair $(X,B)$ is of cluster type $\mathcal{C}(X,B)$.
Theorem~\ref{introthm:t-equal-zero} states that a log Calabi--Yau pair $(X,B)$ is of cluster type 
if and only if $X\setminus B$ 
is divisorially covered by finitely many almost tori.

\begin{lemma}\label{lem:diagram-cluster-type-surf}
Let $(X,B)$ be a log Calabi--Yau surface.
Let $\phi \colon (\pp^2,\Sigma^2)\dashrightarrow (X,B)$ be a cluster type crepant birational map.
Then, there is a commutative diagram of projective birational maps:
\[
\xymatrix{
(T,B_T)\ar[d]_-{p_1} & (X_0,B_0)\ar[l]^-{q}\ar[d]^-{p_2} \\
(\pp^2,\Sigma^2) & (X,B) \ar@{-->}[l]^-{\phi^{-1}}
}
\] 
where the following conditions are satisfied:
\begin{itemize}
    \item $p_1$ is a toric projective birational morphism,
    \item $p_2$ is a projective birational morphism that only extracts log canonical places of $(X,B)$, and 
    \item $q$ is a projective birational morphism that only extracts canonical places of $(T,B_T)$.
\end{itemize}
\end{lemma}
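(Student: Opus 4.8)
The plan is to resolve $\phi$, peel off the log canonical places on the $X$-side by means of Lemma~\ref{lem:factoring-contraction}, and then recognize the resulting model as a canonical modification of a toric blow-up of $\pp^2$. Since $\phi$ is crepant and $(\pp^2,\Sigma^2)$ has index one, $(X,B)$ has index one, so $K_X+B\sim 0$ is integral and $B$ is reduced. First I would choose a common log resolution: a smooth projective surface $W$ with proper birational morphisms $g\colon W\to\pp^2$ and $h\colon W\to X$ such that $h=\phi\circ g$, equipped with the crepant boundary $B_W$ determined by $K_W+B_W=g^*(K_{\pp^2}+\Sigma^2)=h^*(K_X+B)$. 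Then $(W,B_W)$ is a log Calabi--Yau surface of index one, and I would apply Lemma~\ref{lem:factoring-contraction} to the birational contraction $h$ to obtain a factorization $W\xrightarrow{a}X_0\xrightarrow{p_2}X$, where $a$ contracts only canonical places and $p_2$ contracts only log canonical places of $(W,B_W)$. Equivalently, $p_2$ extracts only log canonical places of $(X,B)$, which is the required property of $p_2$. Moreover, since every prime divisor on $X_0$ is either $p_2$-exceptional (a log canonical place, of log discrepancy $0$) or the strict transform of a prime divisor on $X$ (of log discrepancy $1-\mathrm{coeff}(B)\in\{0,1\}$ because $B$ is reduced), the boundary $B_0$ is reduced and every prime divisor of $X_0$ has log discrepancy either $0$ (a boundary component) or $1$ (a canonical place).

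Next I would build the toric variety $T$. Each component of $B_0$ is a log canonical place of $(X_0,B_0)$, hence, through the crepant identification, a log canonical place of $(\pp^2,\Sigma^2)$; and the log canonical places of the toric pair $(\pp^2,\Sigma^2)$ are exactly the torus-invariant divisorial valuations. I would therefore let $p_1\colon T\to\pp^2$ be the toric birational morphism extracting precisely the finitely many toric valuations corresponding to the components of $B_0$, with $B_T$ the toric boundary; then $(T,B_T)$ is a toric log Calabi--Yau pair crepant to $(\pp^2,\Sigma^2)$ and hence to $(X_0,B_0)$, and $p_1$ is toric as required. The three coordinate lines of $\Sigma^2$ are log canonical places, so they are not contracted by $a$ and their strict transforms survive as components of $B_0$; together with the extracted valuations, the crepant identification of divisorial valuations then gives a bijection between the components of $B_T$ and the components of $B_0$.

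Finally I would analyze $q\colon X_0\dashrightarrow T$, the birational map induced by the common function field. The cluster-type hypothesis $\mathcal{T}(\phi)=0$ means that $\phi$ embeds a big open subset of $\mathbb{G}_m^2$ into $X\setminus B$; since $p_2$ extracts only boundary divisors, $X_0\setminus B_0$ is isomorphic to $X\setminus B$ away from codimension two, while $T\setminus B_T=\mathbb{G}_m^2$. Hence $X_0\setminus B_0$ and $T\setminus B_T$ share a common big open subset and therefore agree away from codimension two. Using this together with the bijection of boundary components, I would rule out indeterminacy of $q$: any indeterminacy point of $X_0$ would blow up to a curve on $T$ lying either in $B_T$ (impossible, since every boundary divisor of $T$ is the transform of a boundary component of $X_0$) or in $T\setminus B_T$ (impossible by the codimension-two comparison), so $q$ is a morphism. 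The same dichotomy applies to a $q$-exceptional divisor $F$: it cannot be a component of $B_0$, so it lies in $X_0\setminus B_0$ and hence has log discrepancy $1$ with respect to $(X_0,B_0)$, and thus with respect to $(T,B_T)$; so $F$ is a canonical place of $(T,B_T)$, as needed. Commutativity $p_1\circ q=\phi^{-1}\circ p_2$ holds because all four maps restrict to the identity on the shared open torus. The hard part is exactly this last step: controlling simultaneously the indeterminacy and the exceptional divisors of $q$, where one must combine the reducedness of $B_0$ (forcing all log discrepancies into $\{0,1\}$) with the cluster-type codimension-two comparison of the two torus complements.
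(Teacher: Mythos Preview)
Your argument is correct, but it is organized differently from the paper's and, as a result, does more work than strictly necessary.

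The paper proceeds as follows. Using the cluster-type hypothesis exactly as you do (a canonical place that is a divisor on $\pp^2$ but exceptional over $X$ would be a $\phi$-exceptional curve meeting $\mathbb{G}_m^2$), it first arranges a model $X_0\to X$ extracting only log canonical places such that $X_0\to\pp^2$ is already a \emph{morphism}. Then it applies Lemma~\ref{lem:factoring-contraction} to $X_0\to\pp^2$, obtaining $X_0\xrightarrow{q} T\xrightarrow{p_1}\pp^2$ with $q$ contracting only canonical places and $p_1$ contracting only log canonical places; since log canonical places of $(\pp^2,\Sigma^2)$ are toric valuations, $p_1$ is toric. In this route $q$ is a morphism by construction, and no separate indeterminacy analysis is needed.

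You instead apply Lemma~\ref{lem:factoring-contraction} to $W\to X$ to produce $X_0$, build $T$ independently from the toric valuations appearing in $B_0$, and then argue that $q\colon X_0\dashrightarrow T$ is a morphism via the ``common big open'' comparison. This works, but the last step is exactly the extra labor your route incurs. In fact your own observation---that every canonical place contracted by $W\to X_0$ is, by cluster type, also exceptional over $\pp^2$---already shows that $W\to\pp^2$ factors through $X_0$, which would let you bypass the indeterminacy discussion entirely and recover the paper's shortcut.

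One small imprecision to flag: you assert that $X_0\setminus B_0$ and $T\setminus B_T$ ``agree away from codimension two.'' The shared open $U\simeq\mathbb{G}_m^2\setminus Z'$ is big in $T\setminus B_T$, but it need not be big in $X_0\setminus B_0$: any $\phi^{-1}$-exceptional curve on $X$ lying outside $B$ gives a divisor of $X_0\setminus B_0$ disjoint from $U$. Fortunately your argument only uses bigness on the $T$-side (to force a hypothetical curve $E\subset T\setminus B_T$ to meet $U$), so the conclusion is unaffected; just adjust the wording.
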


\begin{proof}
We can find a projective birational morphism 
$(X_0,B_0)\rightarrow (X,B)$ only extracting log canonical places of $(X,B)$ for which the birational contraction $X_0\rightarrow \pp^2$ is a morphism. Indeed, by definition of cluster type birational map any canonical place of $(X,B)$ which is exceptional over $X$ must also be exceptional over $\pp^2.$ Observe that the morphism $X_0\rightarrow \pp^2$ is the outcome of the $K_{X_0}$-MMP over $\pp^2$.
By Lemma~\ref{lem:factoring-contraction}, we can factorize $X_0\rightarrow \pp^2$ as a contraction of canonical places of $(X,B)$ 
followed by a contraction of log canonical places of $(X,B)$.
Since $T\rightarrow \pp^2$ only extracts log canonical places of $(\pp^2,\Sigma^2)$, we conclude that $p_1$ is a toric birational morphism.
\end{proof}

\section{Birational complexity zero}
In this section, we show a structural theorem for varieties of birational complexity zero.
In the first subsection, we study affine varieties that divisorially cover log Calabi--Yau pairs of birational complexity zero.
In the second subsection, we study the birational complexity of Gorenstein del Pezzo surfaces of rank one.

\subsection{Log Calabi--Yau pairs.}
In this subsection, we give a proof of Theorem~\ref{introthm:covering-bcomp0}.
First, we will need the following lemma.

\begin{lemma}\label{lem:special-model-complexity-1}
Let $(X,B)$ be a toric log Calabi--Yau pair of dimension $n$.
Let $E$ be a divisorial valuation exceptional over $X$ with $a_E(X,B)=1$.
Then, there exists a crepant birational map 
$\phi\colon (Y,B_Y)\dashrightarrow (X,B)$ satisfying the following conditions:
\begin{enumerate}
    \item we have $c(Y,B_Y)=1$, 
    \item the center of $E$ on $Y$ is a prime divisor $E_Y$, 
    \item the variety $Y$ is $\qq$-factorial and admits a fibration $p\colon Y\rightarrow Z$ 
    to a variety of dimension $n-1$,
    \item there is a prime component $F_Y$ of 
    $p^{-1}(p(E_Y))$ that dominates $p(E_Y)$, is not contained in $B_Y$, and it is different from $E_Y$, and
    \item the pair $(Y,B_Y+\epsilon F_Y)$ is dlt for $\epsilon>0$ small enough.
\end{enumerate}
Furthermore, the birational map $\phi\colon Y\dashrightarrow X$ only extracts $E_Y$ and log canonical places of $(X,B)$.
\end{lemma}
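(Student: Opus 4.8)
The plan is to realize $(Y,B_Y)$ as an explicit non-toric modification of a toric rational-curve fibration, keeping track of the fact that $E$ is a non-toric valuation. I first record the basic toric dichotomy: for the toric log Calabi--Yau pair $(X,B)$ every toric divisorial valuation is a log canonical place, i.e.\ has log discrepancy $0$, so the hypothesis $a_E(X,B)=1$ forces $E$ to be non-toric and its center to meet $B$. Correspondingly, in the crepant pullback the divisor $E_Y$ will appear with coefficient $1-a_E(X,B)=0$, so $E_Y\not\subseteq B_Y$; and since $c(Y,B_Y)=1>0$ the pair $(Y,B_Y)$ cannot be toric, consistent with $Y$ arising from $X$ by a non-toric blow-up. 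The model I aim for is a fibration $p\colon Y\to Z$ over an $(n-1)$-dimensional (toric) base, with general fiber the complexity-zero pair $(\pp^1,\{0\}+\{\infty\})$, in which $E_Y$ is a vertical component of the degenerate fiber over a divisor $W=p(E_Y)\subseteq Z$, and $F_Y$ is a second vertical component of that fiber.

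\textbf{Construction.}
First I would pass to a $\qq$-factorial toric crepant model $(X',B')$ of $(X,B)$ equipped with a toric $\pp^1$-fibration $q\colon X'\to Z$ onto an $(n-1)$-dimensional toric variety, chosen so that the center of $E$ on $X'$ is a codimension-two subvariety lying in a single non-toric fiber-divisor $\Phi\subseteq X'$ and in exactly one boundary component $D'\subseteq B'$. With the center so positioned, extracting $E$ over $X'$ produces $Y\to X'$ with exceptional divisor $E_Y$; because the center is vertical and dominates $W:=q(\Phi)\subseteq Z$, the divisor $E_Y$ is vertical with $p(E_Y)=W$ for the induced fibration $p\colon Y\to X'\to Z$. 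I then take $F_Y$ to be the strict transform of $\Phi$, which is the other component of $p^{-1}(W)$ dominating $W$. Letting $B_Y$ be the crepant pullback of $B$, the conditions $E_Y\not\subseteq B_Y$ (from $a_E(X,B)=1$) and $F_Y\not\subseteq B_Y$ (since $\Phi$ is non-toric, hence neither a boundary component nor an extracted log canonical place) hold by construction, and $E_Y\neq F_Y$.

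\textbf{Verifications.}
To check $c(Y,B_Y)=1$ I would track the complexity along the construction: each toric crepant modification used to build $X'$ adds one toric divisor to both $\dim\Cl_\qq$ and to $|B'|$ (the new divisor enters the boundary with coefficient $1$), hence preserves complexity $0$; the final extraction of $E$ raises $\dim\Cl_\qq$ by one while leaving $|B_Y|$ unchanged, because $E_Y\not\subseteq B_Y$, so the complexity becomes exactly $1$. $\qq$-factoriality is arranged by taking the toric model and the extraction $\qq$-factorial, any auxiliary rays being toric and therefore log canonical places, which is permitted. For condition (5) I would verify that $(Y,B_Y+\epsilon F_Y)$ is dlt by a local analysis near the generic point of $W$, where $B_Y+F_Y$ restricts on the fiber to the standard picture of $(\pp^1,\{0\}+\{\infty\})$ degenerating into $E_Y\cup F_Y$, transversality of $\Phi$ and $D'$ supplying log-smoothness and an argument in the spirit of Lemma~\ref{lem:dlt-mod-non-lc} ensuring nothing beyond $F_Y$ becomes non-klt. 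Finally, that $\phi$ extracts only $E_Y$ and log canonical places holds because the toric modifications extract only toric divisors, all of which are log canonical places of $(X,B)$, while the last step extracts only $E$.

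\textbf{Main obstacle.}
The crux is the first step: producing the toric $\pp^1$-fibration $q\colon X'\to Z$ compatible with $E$, i.e.\ arranging by toric crepant modifications that the center of $E$ lies vertically over a divisor $W\subseteq Z$ inside a single boundary component, so that the reducible fiber over $W$ supplies the non-boundary companion $F_Y$. For a general valuation with $a_E(X,B)=1$ one cannot expect the center to become a smooth codimension-two intersection after a single toric modification, and the discrepancy bookkeeping forcing $a_E=1$ must be maintained throughout; handling this is where the combinatorics of the fan of $X$ and of $E$ enter. An alternative that sidesteps the explicit fan combinatorics is to first build any $\qq$-factorial complexity-one crepant model extracting $E$ and then appeal to the structure theory of complexity-one log Calabi--Yau pairs to produce the fibration $p\colon Y\to Z$; the remaining work is then to verify the finer conditions (2)--(5), with the dlt property of $(Y,B_Y+\epsilon F_Y)$ being the most delicate point.
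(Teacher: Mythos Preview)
Your overall strategy coincides with the paper's: pass to a toric model carrying a $\pp^1$-fibration in which the center of $E$ is a divisor inside a single horizontal boundary component, extract $E$ so that $E_Y$ is vertical over a divisor $W$ in the base, and take $F_Y$ to be the strict transform of the full fiber over $W$. The complexity count and the ``only $E_Y$ and log canonical places are extracted'' bookkeeping are also correct. But what you have is a sketch, and the two steps you yourself flag as delicate are precisely where the paper does the work.

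\emph{Arranging the center.} You call this the main obstacle and stop. The paper's argument is to iterate: take a toric log resolution of the blow-up of the smallest toric stratum containing $c_E$, and repeat. Termination comes from \cite[Lemma~2.29]{KM98}, which gives $a_E(X)\ge\min\{a_P(X):c_E(X_i)\subset P\}$, together with the fact that for any fixed $k$ only finitely many toric valuations have $a_P(X)<k$; if the process never stopped, the right-hand side would eventually exceed $a_E(X)$. One lands with $c_E$ a divisor in a unique boundary component $Q$ (using \cite[Theorem~1.2]{Tev07} to avoid deeper strata), and the fibration is then produced by projecting the fan along the ray $\rho_Q$, so that $Q$ becomes a section and $c_E$ maps to a divisor in the base.

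\emph{Extracting $E$ and condition~(5).} Saying ``extracting $E$ over $X'$ produces $Y\to X'$'' hides two genuine issues. First, blowing up the codimension-two center need not realize the specific valuation $E$; the paper instead passes to a log resolution on which $E$ is already a divisor and runs a relative MMP to contract every other exceptional divisor of positive log discrepancy, leaving exactly $E$ and log canonical places. Second, after this MMP the pair $(Y_1,B_{Y_1})$ is dlt, but $(Y_1,B_{Y_1}+\epsilon F_{Y_1})$ need not be, since the MMP may create singularities along $F_{Y_1}$. Your ``local analysis near the generic point of $W$'' does not address this. The paper takes a further dlt modification of $(Y_1,B_{Y_1}+\epsilon F_{Y_1})$ and invokes Lemma~\ref{lem:dlt-mod-non-lc} to guarantee that, for $\epsilon$ small, only log canonical places of $(Y_1,B_{Y_1})$ are extracted; this is what keeps $c(Y,B_Y)=1$ while securing condition~(5).
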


\begin{proof}
First, we argue that in some log resolution $(T,B_T)$ of $(X,B)$ the center of $E$ on $T$ is a prime divisor of a prime component of $B_T$. Since $\mathbb{G}_m^n$ is smooth, for every toric modification
$(X_i,B_i)\rightarrow (X,B)$ the center of $E$ on $X_i$ must be contained in $B_i$.
We construct a sequence of toric birational morphisms as follows: we set $(X_0,B_0):=(X,B)$ and for each $i$ we let $(X_i,B_i)$ to be a toric log resolution of a blow up
of the smallest stratum of $B_i$ containing $c_{E}(X_i)$. This process stops precisely when $c_{E}(X_i)$ is contained in a prime component of $B_i$ and is not contained in smaller dimensional strata.
For a fixed constant $k$, there are only finitely many toric valuations $F$ over $X$ with $a_F(X)<k$. 
Thus, for $i$ large enough, we
have $a_F(X)\geq k$ 
for every prime divisor $F$ extracted by $X_{i+1}\rightarrow X_i$.
By~\cite[Lemma 2.29]{KM98}, we have 
\[
a_E(X) \geq \min \{ a_P(X) \mid 
\text{ 
$P\subset B_i$ prime and $c_E(X_i)\subset P$
}
\}.
\]
We conclude that this process must stop.
Thus, we can find a log resolution $(T,B_T)$ of $(X,B)$ where the center of $E$ is a prime divisor of a component of $B_T$.
By~\cite[Theorem 1.2]{Tev07}, we can further assume that the center $c_E(T)$ contains no other strata.
Note that $(T,B_T)\rightarrow (X,B)$ is a toric birational map so $T\rightarrow X$ only extracts log canonical places of $(X,B)$.

Let $\Sigma_T \subset N_\qq$ be the fan associated to $T$.
Let $Q\subset B_T$ be the unique prime component that contains $c_E(T)$.
Let $\rho_Q\in \Sigma_T(1)$ be the ray corresponding to the torus invariant divisor $Q$.
Let $N_{0,\qq}\subset N_\qq$ be an orthogonal complementary subspace to the ray generated by $\rho_Q$. Let $\{e_1,\dots,e_{n-1}\}$ be an orthogonal basis of $N_{0,\qq}$.
Consider the fan 
\[
\Sigma_0:=\{\pm e_1,\dots, \pm e_{n-1}, \pm \rho_Q\}.
\]
Then $X(\Sigma_0)\simeq (\pp^1)^n$, 
the center of $Q$ on $X(\Sigma_0)$ is a torus invariant divisor, and 
this torus invariant divisor is a section 
for a fibration $(\pp^1)^n\rightarrow (\pp^1)^{n-1}$.
Let $\Sigma_1$ be a common refinement of $\Sigma_T$ and $\Sigma_0$.
Let $T_1$ be the projective toric variety associated to $\Sigma_1$.
Then, we have a projective birational morphism 
$T_1\rightarrow T$
and a fibration $p_{T_1}\colon T_1\rightarrow Z:=(\pp^1)^{n-1}$
such that the strict transform of $Q$ on $T_1$ dominates $Z$.
We let $(T_1,B_{T_1})$ be the log pull-back of $(T,B_T)$ to $T_1$.
Observe that $T_1\rightarrow X$ is a toric birational morphism,
so it only extracts log canonical places of $(X,B)$.

Let $Y_0\rightarrow T_1$ be a log resolution of $(T_1,B_{T_1})$ that extracts $E$.
Let $E_{Y_0}$ be the center of $E$ on $Y_0$.
Let $B_{Y_0}$ be the strict transform of $B_{T_1}$ on $Y_0$.
Let $R_{Y_0}$ be the reduced exceptional of $Y_0\rightarrow T_1$ minus $E$.
Then, the pair $(Y_0,B_{Y_0}+R_{Y_0})$ is log smooth.
Note that 
\[
K_{Y_0}+B_{Y_0}+R_{Y_0} \sim_\qq 
\sum_{\substack{P\subseteq Y_0 \\ P\neq E_{Y_0}}}
a_{P}(T_1,B_{T_1})P.
\]
We call $\Omega_{Y_0}$ the effective divisor on the right side of the previous $\qq$-linear equivalence.
Note that $\Omega_{Y_0}$ contains the support of all the prime divisors exceptional over $T_1$ that have positive log discrepancy with respect to $(T_1,B_{T_1})$ except for $E_{Y_0}$.
The divisor $\Omega_{Y_0}$ is degenerate over $T_1$.
We run a $(K_{Y_0}+B_{Y_0}+R_{Y_0})$-MMP over $T_1$ with scaling of an ample divisor.
After finitely many steps all the prime components of $\Omega_{Y_0}$ are contracted. 
Note that this MMP terminates after these components are contracted.
We let $Y_1$ be the model obtained by this MMP.
Let $B_{Y_1}$ be the push-forward of $B_{Y_0}$ on $Y_1$.
Let $E_{Y_1}$ be the push-forward of 
$E_{Y_0}$ on $Y_1$.
Then, the pair $(Y_1,B_{Y_1})$ satisfies the following conditions:
\begin{enumerate}
\item[(i)] the pair $(Y_1,B_{Y_1})$ is $\qq$-factorial and dlt, and 
\item[(ii)] the birational morphism $Y_1\rightarrow T_1$ only extracts $E_{Y_1}$ and log canonical places of $(T_1,B_{T_1})$.
\end{enumerate}
Condition (ii) implies that $c(Y_1,B_{Y_1})=1$.
Let $p_{Y_1}\colon Y_1\rightarrow Z$ be the induced fibration and let $E_{Y_1}$ be the center of $E$ on $Y_1$.
Then we have that $p_{Y_1}^{-1}(p_{Y_1}(E_{Y_1}))$ has at least two prime components: $E_{Y_1}$ and the strict transform of $p_{T_1}^{-1}(p_{Y_1}(E_{Y_1}))$.
Thus, the log pair $(Y_1,B_{Y_1})$ satisfies conditions (1)-(4).
Let $F_{Y_1}$ be a prime component of $p_{Y_1}^{-1}(p_{Y_1}(E_{Y_1}))$ that dominates $p_{Y_1}(E_{Y_1})$, is not contained in $B_{Y_1}$, and is different from $E_{Y_1}$.
By~\cite[Theorem 3.1]{KK10}, we can take a minimal dlt modification $(Y,B_Y+\epsilon F_Y)$ of 
$(Y_1,B_{Y_1}+\epsilon F_{Y_1})$.
By Lemma ~\ref{lem:dlt-mod-non-lc}, for $\epsilon>0$ small enough all the non-klt places
of $(Y_1,B_{Y_1}+\epsilon F_{Y_1})$ are log canonical places of $(Y_1,B_{Y_1})$.
In particular, the pair $(Y,B_Y)$ is a dlt modification of $(Y_1,B_{Y_1})$.
Thus, we have $c(Y,B_Y)=1$.
Conditions (1)-(4) are clearly preserved on $(Y,B_Y)$.
We conclude that the pair $(Y,B_Y)$ satisfies the conditions (1)-(5).
Note that the birational map
$\phi\colon Y\dashrightarrow X$ only extracts $E_Y$ and log canonical places of $(X,B)$.
\end{proof}

Now, we turn to prove the first theorem of this article.

\begin{proof}[Proof of Theorem~\ref{introthm:covering-bcomp0}]
Let $(X,B)$ be a log Calabi--Yau pair of dimension $n$, index one, and birational complexity zero.
By~\cite[Theorem 1.6]{MM24}, there is a crepant birational map $\phi\colon (T,B_T)\dashrightarrow (X,B)$
where $(T,B_T)$ is a toric log Calabi--Yau pair.
Let 
\[
U_1:=\phi(T\setminus (B_T\cup {\rm Ex}(\phi))).
\]
Then $U_1$ is isomorphic to an open subvariety of $T\setminus B_T\simeq \mathbb{G}_m^n$.
As $(T,B_T)$ has no log canonical centers along $T\setminus B_T$, we conclude that 
$\phi$ induces an embedding $U_1\hookrightarrow X\setminus B$.
Let $E_2,\dots,E_k$ be the $(n-1)$-dimensional prime components of $(X\setminus B)\setminus U_1$.

For each $i\in \{2,\dots,k\}$, we argue that there exists a crepant birational map
$\phi_i\colon (T_i,B_{T_i}) \dashrightarrow (X,B)$,
from a toric log Calabi--Yau pair $(T_i,B_{T_i})$ such that the center of $E_i$ on $T_i$ is a divisor intersecting $T_i\setminus B_{T_i}$. 
This implies that the divisor $E_i$ intersects
\[
U_i:=\phi_i(T_i\setminus (B_{T_i}\cup {\rm Ex}(\phi_i))). 
\]
Thus, the open affines $U_1,\dots,U_k$ cover $X\setminus B$ divisorially.

It suffices to show that the statement holds for $E:=E_2$.
By assumption, the divisor $E$ gives a divisorial valuation exceptional over $T$. 
Note that $a_E(T,B_T)=1$.
We can apply Lemma~\ref{lem:special-model-complexity-1} to the toric log Calabi--Yau pair $(T,B_T)$ and the valuation $E$. 
We obtain a crepant birational map
$\phi\colon (Y,B_Y)\dashrightarrow (T,B_T)$
satisfying the following conditions:
\begin{enumerate}
\item we have $c(Y,B_Y)=1$,
\item the center of $E$ on $Y$ is a prime divisor $E_Y$, 
\item the variety $Y$ is $\qq$-factorial and admits a fibration $p\colon Y\rightarrow Z$ to a variety of dimension $n-1$,
\item there is a prime component $F_Y$ of $p^{-1}(p(E_Y))$ that dominates $E_Y$, is not contained in $B_Y$, and is different from $E_Y$, and 
\item the pair $(Y,B_Y+\epsilon F_Y)$ is dlt for $\epsilon>0$ small enough.
\end{enumerate}
By condition (5), we can run a $(K_Y+B_Y+\epsilon F_Y)$-MMP over $Z$ with scaling of an ample divisor.
Note that $K_Y+B_Y\sim_\qq 0$ so 
$K_Y+B_Y+\epsilon F_Y\sim_\qq \epsilon F_Y$.
By condition (4), we know that the divisor $F_Y$ is degenerate over $Z$, so this MMP terminates after contracting $F_Y$.
Let $T_1$ be the model obtained after contracting $F_Y$. Let $B_{T_1}$ be the push-forward of $B_Y$ on $T_1$.
Since $F_Y$ is not contained in $B_Y$, the complexity drops precisely by one when we contract this divisor.
Hence, we get $c(T_1,B_{T_1})=0$, so $(T_1,B_{T_1})$ is a toric log Calabi--Yau pair.
Further, the center of $E$ on $T_1$ is a divisor that intersects $T_1\setminus B_{T_1}$. This finishes the proof.
\end{proof}

\subsection{Log Calabi--Yau Surfaces.}
In this subsection we study the birational complexity log Calabi--Yau pairs $(X,B)$, where $X$ is a Goresnstein del Pezzo surface of Picard rank $1$.

\begin{lemma}\label{lem:1-comp-DE}
Let $(X;x)$ be a $D$-type or $E$-type singularity.
Then the only $1$-complement of $(X;x)$ is the trivial one, i.e., $B=0$.
\end{lemma}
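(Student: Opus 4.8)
The plan is to reduce a putative $1$-complement to a principal divisor and then show its generator must be a unit. Since a $D$- or $E$-type singularity is a rational double point, it is a hypersurface singularity, hence Gorenstein, so $K_X$ is Cartier and in fact $K_X\sim 0$ in a neighborhood of $x$. By definition a $1$-complement is an effective divisor $B\ge 0$ with $(X,B)$ log canonical and $K_X+B\sim 0$ near $x$; combined with $K_X\sim 0$ this forces $B\sim 0$, so $B=\operatorname{div}(g)$ for some $g\in K(X)^\times$. Since $B$ is effective near $x$ and $X$ is normal, $g$ must lie in $\mathcal{O}_{X,x}$. The goal thereby reduces to showing that $g$ is a unit, i.e. $g(x)\ne 0$.

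Next I would pass to the minimal resolution $\pi\colon \widetilde X\to X$, whose exceptional curves $E_1,\dots,E_r$ are $(-2)$-curves forming the corresponding Dynkin diagram, and which is crepant, so $K_{\widetilde X}=\pi^*K_X$. Writing $\pi^*B=\pi_*^{-1}B+\sum_i c_i E_i$ with $c_i=\ord_{E_i}(g)\in\mathbb{Z}_{\ge 0}$, the crepancy gives $a_{E_i}(X,B)=1-c_i$, so log canonicity of $(X,B)$ forces $c_i\le 1$, i.e. $c_i\in\{0,1\}$. From $K_X+B\sim 0$ we get $K_{\widetilde X}+\pi_*^{-1}B+\sum_i c_iE_i\sim 0$; intersecting with each $E_j$ and using $K_{\widetilde X}\cdot E_j=0$ yields
\[
\pi_*^{-1}B\cdot E_j=-\sum_{i\in S}E_i\cdot E_j,\qquad S:=\{i\mid c_i=1\}.
\]

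The crux of the argument is the combinatorial reading of this identity, and it is exactly here that the $D$/$E$ hypothesis enters. Because $\pi_*^{-1}B$ is a strict transform, it meets every $E_j$ nonnegatively, so for $j\notin S$ the right-hand side shows $E_j$ has no neighbor in $S$; hence $S$ is a union of connected components of the (connected) Dynkin diagram, forcing $S=\emptyset$ or $S=\{1,\dots,r\}$. If $S$ were everything, then at the trivalent node $j$ of a $D$- or $E$-diagram we would get $\pi_*^{-1}B\cdot E_j=2-3=-1<0$, a contradiction; note this is precisely the step that would fail for type $A_n$, whose diagram is a chain with all vertices of degree $\le 2$. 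Thus $S=\emptyset$ and all $c_i=0$. I expect this trivalent-vertex computation to be the main (though modest) obstacle, together with the care needed to justify that log canonicity of $(X,B)$ suffices to bound the $c_i$.

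Finally, I would close the loop by observing that if $g$ were not a unit then $g(x)=0$, so $g\circ\pi$ vanishes along the entire fiber $\pi^{-1}(x)=\bigcup_i E_i$, forcing $c_i\ge 1$ for all $i$ and contradicting $S=\emptyset$. Therefore $g$ is a unit in $\mathcal{O}_{X,x}$, whence $B=\operatorname{div}(g)=0$ near $x$, as claimed.
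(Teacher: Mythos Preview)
Your proof is correct and follows essentially the same approach as the paper: pass to the crepant minimal resolution, use integrality and log canonicity to force the exceptional coefficients into $\{0,1\}$, and obtain a contradiction from the intersection number at the trivalent vertex of the $D$/$E$ Dynkin diagram. The paper's organization is slightly more direct---it assumes $B\neq 0$ through $x$ at the outset, which (by your own final paragraph) already forces every $c_i\ge 1$, so your intermediate connected-components argument is correct but unnecessary---yet the key computation is identical, merely phrased via adjunction to the trivalent curve rather than as a bare intersection number.
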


\begin{proof}
Let $p\colon Y\rightarrow X$ be a minimal resolution.
For each $E\subset Y$ exceptional over $X$ we have
$a_E(X)=1$.
Assume that $(X,B;x)$ is a $1$-complement and $B\neq 0$ through $x\in X$.
Then $a_E(X,B)$ is integral and strictly less than $1$ for each $E\subset Y$ exceptional over $X$.
Thus, $a_E(X,B)=0$ for every such a curve.
We conclude that $p^*(K_X+B)=K_Y+B_Y+E$ where $E$ is the reduced exceptional divisor.
Here, $B_Y$ is the strict transform of $B$ on $Y$.
As $(X;x)$ is either $D$-type or $E$-type, 
there is a curve $C\subset E$ that intersects at least $3$ other prime components $E_1,E_2$, and $E_3$ of $E$.
Note that $K_Y+B_Y+E$ is log canonical and $\qq$-trivial over $X$.
Performing adjunction to $C$ we observe 
\[
(K_Y+B_Y+E)|_C=K_C+E_1|_C+E_2|_C+E_3|_C+B_Y|_C.
\]
From the previous formula, we get that
$(K_Y+B_Y+E)\cdot C>0$ leading to a contradiction.
\end{proof}

By~\cite[Theorem 1.2]{Ye02}, there is an explicit list of the possible singularities Gorenstein del Pezzo surfaces of Picard rank $1$. Moreover these are up to isomorphism uniquely determined by their singularity type, except for the cases with singularities in $\{E_8,E_7+A_1,E_6+A_2,D_4+D_4\}$. 

For the case of singularities $D_4+D_4$, there is an infinite family of Gorenstein del Pezzo surfaces of Picard rank $1$, we will label any member of this family by $X(D_4+D_4)$.

For each of the singularities $\{E_8,E_7+A_1,E_6+A_2\}$, there are two isomorphism classes of Gorenstein del Pezzo surfaces of Picard rank $1$, and they can be obtained as contractions from rational elliptic surfaces. For each one of these singularities there is a case where the elliptic surface does not have an $I_1$ singular fiber (see \cite[Table 4.1]{BBD84}), these cases will be labeled $X_1(-)$. The other case will be labeled $X(-)$ as with any other singularity that has a unique member.

The main statement of this subsection is the following theorem:

\begin{theorem}\label{thm:bir-comp-Del-Pezzo-rank-1}
Let $X$ be a Gorenstein del Pezzo surface of rank $1$. Then there exists a $1$-complement $B$ of $X$ with $c_{\rm bir}(X,B)=0$, if and only if $X$ is not isomorphic to $X(D_4+D_4), X_1(E_8), X_1(E_7+A_1), X_1(E_6+A_2)$. 
\end{theorem}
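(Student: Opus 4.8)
The plan is to characterize, among Gorenstein del Pezzo surfaces $X$ of Picard rank one, exactly which admit a $1$-complement $B$ with $c_{\rm bir}(X,B)=0$. By the theorem of Mauri and the third author cited earlier, $c_{\rm bir}(X,B)=0$ is equivalent to $(X,B)\simeq_{\rm bir}(\pp^2,\Sigma^2)$, i.e.\ to the existence of a toric model. So the task reduces to: for which $X$ is there a $1$-complement $B$ (necessarily with $K_X+B\sim 0$) that is crepant birational to a toric log Calabi--Yau pair? I would work on a minimal resolution $p\colon \widetilde{X}\to X$, where $\widetilde{X}$ is a weak del Pezzo surface, and use the explicit classification of Ye~\cite{Ye02} together with the presentation of these surfaces as contractions of rational elliptic surfaces recorded before the statement, since the distinction between $X(-)$ and $X_1(-)$ cases is governed precisely by the presence or absence of an $I_1$ fiber in the associated elliptic fibration (cf.~\cite{BBD84}).

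First I would establish the positive direction: for every $X$ \emph{not} on the exceptional list, I would exhibit an explicit $1$-complement $B$ and a toric model. Here the coregularity-zero philosophy is the guide. A $1$-complement $B$ with $c_{\rm bir}=0$ should have $\mathcal{D}(X,B)\simeq_{\rm PL} S^1$, so I would look for a boundary $B$ whose dual complex on a dlt modification is a cycle of rational curves---a ``wheel'' configuration. On the minimal resolution this means finding an anticanonical cycle of smooth rational curves (a loop in the exceptional configuration plus suitable strict transforms) realizing $\widetilde{X}$, and hence $X$, as crepant birational to $(\pp^1)^2$ or a toric surface. In dimension two Conjecture~\ref{conj:log-rationality} is a theorem of Gross--Hacking--Keel, so it suffices to produce $B$ with $\mathcal{D}(X,B)\simeq_{\rm PL}S^1$ and rational strata; I would use the elliptic surface presentation to locate, in each non-excluded singularity type, an anticanonical member that is a cycle of rational curves, contracting the $(-1)$-curves in the elliptic surface down to $X$ while tracking the boundary.

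The harder and more interesting direction is the negative one: for $X\in\{X(D_4+D_4),X_1(E_8),X_1(E_7+A_1),X_1(E_6+A_2)\}$, I must show \emph{no} $1$-complement with $c_{\rm bir}=0$ exists. The mechanism I expect is an obstruction coming from the singularities: Lemma~\ref{lem:1-comp-DE} already forces that any $1$-complement $B$ must avoid passing through $D$-type and $E$-type singular points, since through such a point the only $1$-complement is the trivial one $B=0$. This is exactly what rules out the $E_8$, $E_7+A_1$, $E_6+A_2$, and $D_4+D_4$ types: a $1$-complement cannot be supported near those deep singularities, so the only candidate boundaries are forced to be concentrated elsewhere, and in the $X_1(-)$ cases (no $I_1$ fiber) and the $D_4+D_4$ case the remaining geometry is too rigid to produce a boundary with $\mathcal{D}(X,B)\simeq_{\rm PL}S^1$. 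Concretely, I would argue that any effective $1$-complement must be the sum of the non-excludable components coming from the elliptic fibration, and then compute that the resulting dual complex is either empty, a point, or an interval rather than $S^1$, hence coregularity is positive and the birational complexity cannot vanish (by the dimension-two case of the conjecture, $c_{\rm bir}=0$ forces coregularity zero).

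The main obstacle will be the negative direction's case analysis: I expect to need, for each of the four excluded surfaces, a precise description of all effective $1$-complements, showing they are forced by the singularity constraints of Lemma~\ref{lem:1-comp-DE} to miss the loop structure. The distinction between $X(-)$ and $X_1(-)$---present versus absent $I_1$ fiber---is what allows a cycle to close up in the former but not the latter, and making this rigorous via the elliptic surface will be the crux. The positive side, by contrast, should reduce to exhibiting explicit anticanonical cycles and invoking Gross--Hacking--Keel, which is comparatively routine once the right $B$ is written down.
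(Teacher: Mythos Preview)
Your overall strategy matches the paper's: construct explicit toric models for the non-excluded types, and show the four excluded surfaces admit no $1$-complement of coregularity zero (hence a fortiori none with $c_{\rm bir}=0$).

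For the positive direction the paper takes a different, more hands-on route than your proposed appeal to Gross--Hacking--Keel. Rather than finding an anticanonical cycle and invoking~\cite{GHK15}, the paper writes down, for each non-excluded singularity type, an explicit sequence of blow-ups of $\pp^2$ realizing the minimal resolution, starting from $(\pp^2,C)$ with $C$ a nodal cubic, from $(\pp^2,Q+L)$ with $Q$ a conic and $L$ a transversal line, or from $(\pp^2,\Sigma^2)$ (Lemmas~\ref{lem:Gor-del-Pezzo-cubic}--\ref{lem:Gor-del-Pezzo-lines}). Your GHK approach is valid and arguably cleaner conceptually, but the paper's explicit constructions are reused later to bound the torus exceptional degree (Theorem~\ref{introthm:bound-ted-gor-dP}), which GHK alone would not give.

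For the negative direction you have the right ingredients---Lemma~\ref{lem:1-comp-DE} forces $B$ away from the $D/E$ points, and the $X(-)$ versus $X_1(-)$ dichotomy is governed by the presence of an $I_1$ fiber---but your final step is underspecified. Your plan to ``compute that the resulting dual complex is empty, a point, or an interval'' is not how the argument actually closes. The paper (Lemma~\ref{lem:no-one-complement}) instead pushes the strict transform $B_Y$ down along the canonical contraction $\varphi\colon Y\to\pp^2$ of~\cite{BBD84} and runs a contact-order argument: since $B_Y$ avoids all the $E_i$, the image $B'\subset\pp^2$ (a cubic) must meet the line $E_1'$ with contact order at least $3$ at a single point, forcing $B'$ to be \emph{irreducible}. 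Coregularity zero then forces $B$ to be nodal, which would produce an $I_1$ fiber in the elliptic surface---contradicting the definition of $X_1(-)$. The $D_4+D_4$ case is not argued but cited from~\cite[Example~3.13]{Mor22}. The contact-order step forcing irreducibility is the one genuine idea your outline is missing; without it, you have not ruled out reducible $1$-complements whose components individually avoid the $E$-type point but whose dual complex could still be a cycle.
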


In \cite[Example 3.13]{Mor22}, it is shown that the varieties of type $X(D_4+D_4)$ do not admit a $1$-complement of coregularity $0$, hence a fortiori they cannot have a $1$-complement with birational complexity $0$. This is also true for the remaining $3$ varieties in the statement of Theorem~\ref{thm:bir-comp-Del-Pezzo-rank-1}

\begin{lemma} \label{lem:no-one-complement}
    Let $X$ be one of $X_1(E_8), X_1(E_7+A_1), X_1(E_6+A_2)$, then $X$ admits no index one complement of coregularity $0$.
\end{lemma}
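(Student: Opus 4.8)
The plan is to exploit the fact that each of these surfaces is anticanonically elliptic. Since the three singularity types $E_8$, $E_7+A_1$, $E_6+A_2$ all have root lattice of rank $8$, the minimal resolution $\tilde\pi\colon \tilde X\to X$ is a weak del Pezzo surface of degree $K_{\tilde X}^2=1$, and because Du Val singularities are canonical, $\tilde\pi$ is crepant, so $(X,B)\simeq_{\rm bir}(\tilde X,D)$ with $D=\tilde\pi^{-1}_*B\in|-K_{\tilde X}|$. The system $|-K_{\tilde X}|$ is a pencil with a single base point $p$, and blowing it up, $\sigma\colon S\to\tilde X$, resolves the pencil into a relatively minimal rational elliptic fibration $S\to\pp^1$ with section $E_0={\rm Ex}(\sigma)$. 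I would first record the crepant dictionary: the section meets each fiber transversally at a smooth point, so every $D$ is smooth at $p$; hence $\sigma^* D=F+E_0$ with $F=\sigma^{-1}_*D$ a fiber, and therefore $\sigma^*(K_{\tilde X}+D)=K_S+F$. Thus $(X,B)\simeq_{\rm bir}(S,F)$, and in particular the two pairs have the same coregularity. Since $|-K_{\tilde X}|$ is a pencil, \emph{every} index-one complement of $X$ arises from a fiber in this way.

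Next I would reduce the lemma to a computation on Kodaira fibers. Writing $F=\sum_i m_iC_i$ for the scheme-theoretic fiber with its multiplicities, $(S,F)$ is log canonical if and only if $F$ is a reduced nodal curve, equivalently of multiplicative type $I_n$. Indeed, every additive fiber either has a component with $m_i\geq 2$ (immediately non-lc), or, in the reduced cases $II,III,IV$, fails to be log canonical at its cusp (log canonical threshold $5/6$), its tangency, or its triple point, where a single blow-up already produces log discrepancy $-1$. Among the lc cases, the smooth fiber $I_0$ makes $(S,F)$ log canonical with a one-dimensional lc center, so coregularity $1$, while each $I_n$ with $n\geq 1$ has a node and hence coregularity $0$. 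Consequently $X$ admits an index-one complement of coregularity $0$ if and only if $S$ carries a fiber of type $I_n$ with $n\geq 1$. As a cross-check consistent with this dichotomy, Lemma~\ref{lem:1-comp-DE} forces $B$ to avoid the $E$-type singular point, so the corresponding fiber cannot be the large additive fiber ($II^*$, $III^*$, or $IV^*$) whose components resolve that point, and must instead be the remaining fiber, which is again additive.

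Finally I would invoke the fiber configurations of the $X_1$ case recorded in \cite[Table 4.1]{BBD84}: the defining property is the absence of an $I_1$ fiber, and the Euler number $12$ is then realized purely by additive fibers, namely $II^*+II$ for $X_1(E_8)$, $III^*+III$ for $X_1(E_7+A_1)$, and $IV^*+IV$ for $X_1(E_6+A_2)$. In particular none of these elliptic surfaces has a fiber of type $I_n$ with $n\geq 1$. By the previous paragraph, none of $X_1(E_8)$, $X_1(E_7+A_1)$, $X_1(E_6+A_2)$ admits an index-one complement of coregularity $0$, which is the claim.

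The main obstacle I expect is the bookkeeping of the second paragraph: setting up the crepant correspondence between complements of $X$ and fibers of $S$ carefully, and verifying that it is the anticanonical divisor with its Kodaira multiplicities (not the reduced fiber) that the pair $(S,F)$ sees, so that the additive and non-reduced fibers are genuinely excluded by log canonicity. The remaining input, namely that the $X_1$ surfaces are precisely those whose fibration is purely additive, is then read off from the classification in \cite{BBD84}.
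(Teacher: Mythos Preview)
Your argument is correct and reaches the same endpoint as the paper---namely, that a coregularity-zero complement would force a multiplicative fiber on the associated rational elliptic surface, contrary to the defining property of the $X_1$ surfaces---but your route is more direct. The paper first invokes Lemma~\ref{lem:1-comp-DE} to keep $B$ away from the $E$-type point, then blows $Y$ down to $\pp^2$ via the explicit maps of \cite[Table~II]{BBD84} and analyzes orders of contact between the image cubic $B'$ and a distinguished line to force $B'$ (hence $B$) to be irreducible; only then does it observe that a nodal irreducible $B$ would be an $I_1$ fiber. You instead identify index-one complements of $X$ with anticanonical members on the degree-one weak del Pezzo $\tilde X$, hence with fibers of the elliptic surface $S$, and read off directly from the Kodaira list that the lc fibers of coregularity zero are exactly the $I_n$ with $n\geq 1$. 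This bypasses the $\pp^2$ computation entirely and treats all three cases uniformly; it also shows slightly more, since it rules out every $I_n$ at once rather than only $I_1$.

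One small correction: you should take $D=\tilde\pi^{*}B$ rather than the strict transform $\tilde\pi^{-1}_{*}B$. These agree when $B$ misses the singular locus, and Lemma~\ref{lem:1-comp-DE} handles the $E$-point, but a priori $B$ could pass through the $A_1$ or $A_2$ point in the $E_7+A_1$ and $E_6+A_2$ cases; only $\tilde\pi^{*}B$ is guaranteed to lie in $|{-K_{\tilde X}}|$. This does not disturb your argument: the $(-2)$-curves over the $A$-type point sit inside the $III$ (respectively $IV$) fiber, so if $\tilde\pi^{*}B$ picks them up then the corresponding fiber is additive and $(S,F)$ fails to be lc, contradicting the hypothesis. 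Also, the phrase ``a single blow-up already produces log discrepancy $-1$'' is accurate for type $IV$ but not for types $II$ and $III$ (the cusp needs three blow-ups to exhibit a negative log discrepancy); the conclusion that these fibers are not lc is of course correct regardless, as your lct computation for the cusp confirms.
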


\begin{proof}
    Assume that $B$ is such that $K_X+B\sim 0$ and ${\rm coreg}(X,B)=0$. Let $x\in X$ be the $E$-type singularity, and $f:Y \rightarrow X$ the minimal resolution of $X$. Let $B_Y$ be the strict transform of $B$. 

    Let $E_1,E_2,\ldots E_k$ be the exceptional curves over $x$ of $f:Y\rightarrow X$, where $E_2, \ldots E_k$ form a chain, and $E_1$ intersects only $E_4$. Here $k$ can be $6$, $7$ or $8$. By Lemma~\ref{lem:1-comp-DE}, $B$ cannot contain $x$, therefore $B_Y$ cannot intersect any $E_i$.
    By \cite[Table II]{BBD84}, there exists a unique sequence of blow-downs $\varphi:Y \rightarrow \pp^2$ that contracts every $E_i$, except for $E_1$. Let $E_1'$ and $B'$ be the images of $E_1$ and $B_Y$ by $\varphi$. In $\pp^2$, $B'$ is of degree $3$ and $E_1'$ is a line.

    By the description of the maps in \cite[Table II]{BBD84}, in $\varphi: Y \rightarrow  \pp^2$ there is a unique point in $E_1'$ over which blow ups are performed, three of these are performed over the strict transforms of $E_1'$, whose exceptional divisors are the strict transforms of $E_2$, $E_3$ and $E_4$.
    If any irreducible component of $B'$ were to have order of contact with $E_1'$ of $1$ or $2$, it would imply that $B_Y$ would intersect $E_2$ or $E_3$ respectively. Therefore $B'$ can have only one irreducible component, hence $B$ is also irreducible. Since ${\rm coreg}(X,B)=0$, $B$ must have nodal singularities. This would imply that a fiber in the rational elliptic surface from which $X$ can be obtained (see \cite[Table 4.1]{BBD84}) is a nodal curve, a contradiction

\end{proof}

To prove Theorem~\ref{thm:bir-comp-Del-Pezzo-rank-1}, we will study some special configurations of curves in $\mathbb{P}^2$ that get mapped to the exceptional divisors of minimal resolutions of these surfaces.

\begin{lemma}\label{lem:Gor-del-Pezzo-cubic}
Let $X$ be a Gorenstein del Pezzo surface of Picard rank one, whose singularity type is $\{A_2+A_1,A_4,D_5,E_6,E_7,E_8,D_6+A_1,D_8,E_7+A_1,A_2+A_2+A_2,A_2+A_5,A_2+E_6,D_4+3A_1, 2A_1+D_6,A_1+A_3+A_3,D_5+A_3,A_2+A_2+A_2+A_2\}$ and let $(\mathbb{P}^2,C)$ be the projective plane with a nodal cubic. Then, there exists a crepant birational map $(\mathbb{P}^2,C)\dashrightarrow (X,B)$, where $(X,B)$ is log canonical.
\end{lemma}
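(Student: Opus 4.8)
The plan is to realize each such $X$ as the Du Val contraction of a weak del Pezzo surface obtained by blowing up $\pp^2$ along points lying on the fixed nodal cubic $C$. Fix $C\in|-K_{\pp^2}|$ an irreducible nodal cubic, so that $(\pp^2,C)$ is a log Calabi--Yau pair of index one with a single node whose smooth locus $C^{\mathrm{sm}}\cong\mathbb{G}_m$ carries a group law. For each listed singularity type I would choose $k=9-K_X^2$ points on $C^{\mathrm{sm}}$, possibly infinitely near but always along $C$ and never at the node, and blow them up to obtain $\varphi\colon Y\to\pp^2$. Writing $\tilde C$ for the strict transform of $C$, the fact that every center lies on $C$ with multiplicity one gives $K_Y+\tilde C=\varphi^*(K_{\pp^2}+C)\sim 0$, so $\tilde C\sim -K_Y$ is an irreducible nodal anticanonical curve and $Y$ is a weak del Pezzo surface.

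The key simplification is that disjointness is automatic. If $\Gamma$ is any $(-2)$-curve on $Y$, i.e. $K_Y\cdot\Gamma=0$ and $\Gamma^2=-2$, then $\tilde C\cdot\Gamma=-K_Y\cdot\Gamma=0$; since $\tilde C$ is irreducible and not a $(-2)$-curve (by adjunction it has arithmetic genus one), it is disjoint from $\Gamma$. Hence contracting the entire configuration of $(-2)$-curves $f\colon Y\to X$ to its Du Val model is crepant for the pair, $K_Y+\tilde C=f^*(K_X+B)$ with $B:=f_*\tilde C$. Consequently $(\pp^2,C)\dashrightarrow(X,B)$ is crepant, and $(X,B)$ is log canonical: near $B$ it agrees with the log smooth pair $(Y,\tilde C)$ (a nodal curve on a smooth surface), while the singularities of $X$ are Du Val and disjoint from $B$. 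The number of contracted curves equals the rank of the root lattice, namely $k$, so $\rho(X)=\rho(Y)-k=1$, matching the Picard rank one hypothesis.

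It therefore remains to produce, for each of the seventeen singularity types, a configuration of $k$ points on $C$ realizing the corresponding $A$--$D$--$E$ sublattice of $E_k$ as $(-2)$-curves; these arise as strict transforms of lines, conics, and exceptional divisors determined by the collinearity, conic-incidence, and infinitely-near relations among the chosen points, all of which translate via the group law on $C^{\mathrm{sm}}\cong\mathbb{G}_m$ into multiplicative relations. Because $\mathbb{G}_m$ contains roots of unity of every order, the torsion relations required by the configurations with repeated factors (such as $A_2+A_2+A_2$, $A_2+A_2+A_2+A_2$, $A_1+A_3+A_3$, or $D_4+3A_1$) can be met; this is precisely the freedom absent when the anticanonical member is forced to be smooth or cuspidal, which is why the excluded surfaces $X(D_4+D_4)$, $X_1(E_8)$, $X_1(E_7+A_1)$, $X_1(E_6+A_2)$ do not occur, and why the construction selects the member carrying a nodal ($I_1$) anticanonical fiber. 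I expect this explicit case-by-case realization to be the main obstacle: for each type one must exhibit a valid point configuration on $C$ and verify that the resulting $(-2)$-curves generate exactly the prescribed lattice. I would organize the verification by the degree $9-k$ and cross-check against the classification of these surfaces and their associated rational elliptic surfaces in \cite{BBD84,Ye02}.
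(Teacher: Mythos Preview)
Your framework is elegant and would streamline most of the argument: once one knows that every $(-2)$-curve $\Gamma$ on $Y$ satisfies $\tilde C\cdot\Gamma=-K_Y\cdot\Gamma=0$, the crepancy and log canonicity follow at once, and indeed the paper's explicit constructions for the majority of the seventeen types (those built from flexes and tangent lines, blowing up only along $C$ at smooth points) fit exactly into your mold. The paper does not isolate this observation and instead verifies each case by an explicit blow-up recipe, so your packaging is cleaner where it applies.

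However, the restriction to centers on $C^{\mathrm{sm}}$ (``never at the node'') is a genuine obstruction for at least the degree-one types $4A_2$ and $2A_1+D_6$. In your setup $\tilde C$ remains an irreducible nodal member of $|-K_Y|$ with $\tilde C^2=1$; blowing up the anticanonical base point yields a rational elliptic surface on which $\tilde C$ is an $I_1$ fiber. Each connected ADE piece of $(-2)$-curves on $Y$ is then the complement of the section-component in a single reducible fiber. For $4A_2$ this forces four fibers each of Euler number at least $3$, and for $2A_1+D_6$ an $I_2^*$ fiber (Euler number $8$) together with two fibers of Euler number at least $2$; in both cases the total, plus the $I_1$, exceeds $12$. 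Hence no weak del Pezzo with these $(-2)$-configurations carries an irreducible nodal anticanonical curve, and no choice of points on $\mathbb{G}_m$ will produce them. The paper handles precisely $4A_2$, $2A_1+D_6$, and $D_4+3A_1$ by blowing up \emph{at the node} $N$: the crepant boundary on the blow-up then becomes $\tilde C+E_N$ rather than $\tilde C$, and in the $4A_2$ case $\tilde C$ itself ends up among the $(-2)$-curves while the boundary $B$ on $X$ is the image of a $(-1)$-curve over $N$. This is the idea missing from your plan. Separately, you have deferred the seventeen explicit realizations; in the paper this case-by-case work is the entire content of the proof.
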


\begin{proof}

We will start by describing some maps from $\mathbb{P}^2$ to each of these surfaces.

    Let $C$ be a nodal cubic in $\mathbb{P}^2$ with a flex $P$ and let $L$ be a line tangent to $C$ at $P$. 
    If we blow up $3,4,5,6,7$ or $8$ times  over $P$ along the strict transform of $C$, we end up with the minimal resolutions of $X(A_1+A_2)$, $X(A_4)$, $X(D_5)$, $X(E_6)$, $X(E_7)$ or $X(E_8)$, respectively.

    Let $C$ be a nodal cubic in $\mathbb{P}^2$ with a flex $P$, $L_1$ be the line tangent to $C$ at $P$ and $L_2$ another line passing through $P$, tangent to $C$ at some other point $Q$.
    If we blow up $5$ times  over $P$ along the strict transform of $C$, and we blow up over $Q$ two times along the strict transform of $C$, we end up with the minimal resolution of $X(D_6+A_1)$.
    If we blow up once more over $Q$ at the strict transform of $C$, we end up with the minimal resolution of $X(D_8)$. 
     
     Let $C$ be a nodal cubic in $\mathbb{P}^2$ with a flex $P$, $L_1$ be the line tangent to $C$ at $P$ and $L_2$ another line passing through $P$, tangent to $C$ at some other point $Q$.
     If we blow up $6$ times  over $P$ and $2$ times over $Q$ along the strict transform of $C$, we end up with the minimal resolution of $X(E_7+A_1)$.
 
    Let $C$ be a nodal cubic in $\mathbb{P}^2$ with two flexes $P_1$ and $P_2$, and $L_1$ and $L_2$ be the lines tangent to $C$ at $P_1$ and $P_2$ respectively.
    If we blow up $3$ times over $P_1$ and $3$ times over $P_2$ along the strict transform of $C$, we end up with the minimal resolution of $X(A_2+A_2+A_2)$.
    If we furthermore blow up over $P_1$ along the strict transform of $C$ once more, we end up with the minimal resolution of $X(A_5+A_2)$.
    If we furthermore blow up over $P_1$ along the strict transform of $C$ once more, we end up with the minimal resolution of $X(E_6+A_2)$.

    Let $C$ be a nodal cubic in $\mathbb{P}^2$ with a flex $P$ and node $N$, let $L_1$ be the line tangent to $C$ at $P$, let $L_2$ be a line passing through $P$ tangent to $C$ at $Q$ and let $L_3$ be the line passing through $P$ and $N$.
    If we blow up $3$ times over $P$ and $2$ times over $Q$ along the strict transform of $C$. And we blow up twice over $N$ along the strict transform of $L_3$, then we have the minimal resolution of $X(A_1+A_1+A_1+D_4)$ .
    If we blow up once more over $Q$ along the strict transform of $C$, then we have the minimal resolution of $X(A_1+A_1+D_6)$.

    Let $C$ be a nodal cubic in $\mathbb{P}^2$ with a flex $P$, let $L_1$ be the line tangent to $C$ at $P$, let $L_2$ be a line passing through $P$ and tangent to $C$ at $Q_1$, and let $L_3$ be a line passing through $Q_1$ and tangent to $C$ at $Q_2$.
    If we blow up $3$ times over $P$, $2$ times over $Q_1$ and $2$ times over $Q_2$ along the strict transform of $C$, then we have the minimal resolution of $X(A_1+A_3+A_3)$.
    If we blow up once more over $Q_2$ along the strict transform of $C$, then we have the minimal resolution of $X(D_5+A_3)$.

    In any of these previous cases let $X$ be the Gorenstein del Pezzo surface of rank $1$. If we take the pair $(X,\hat{C})$, where $\hat{C}$ is the strict transform of $C$ via the maps described. The described maps from $(\mathbb{P}^2,C)$ to $(X,\hat{C})$ are crepant birational.
    
    Let $C$ be a nodal cubic in $\mathbb{P}^2$, with node $N$ and two flexes $P_1$ and $P_2$, and $L_1$ and $L_2$ be the lines tangent to $C$ at $P_1$ and $P_2$ respectively.
    If we blow up $2$ times over $N$, $3$ times over $P_1$ and $3$ times over $P_2$ along $C$, then we have the minimal resolution of $X(A_2+A_2+A_2+A_2)$.

    In this final case, let $B$ be the image on $X$ of the $(-1)$-curve on the minimal resolution that maps to $N$ in $\mathbb{P}^2$. Then the described map from $(\mathbb{P}^2,C)$ to $(X,B)$ is crepant birational.

    As $(\mathbb{P}^2,C)$ is crepant birational to $(\mathbb{P}^2,\Sigma^2)$, via two standard Cremona transformations, we obtain the desired result
\end{proof}

\begin{lemma}\label{lem:Gor-del-Pezzo-conic}
Let $X$ be a Gorenstein del Pezzo surface of Picard rank one, whose singularity type is $X(A_4+A_4)$ and let $(\mathbb{P}^2,Q+L)$ be the projective plane with a conic and a transversal line. Then, there exists a crepant birational map $(\mathbb{P}^2,Q+L)\dashrightarrow (X,B)$, where $(X,B)$ is log canonical.
\end{lemma}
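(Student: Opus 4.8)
The plan is to mirror the structure of Lemma~\ref{lem:Gor-del-Pezzo-cubic}, but starting from the pair $(\pp^2, Q+L)$ consisting of a smooth conic $Q$ and a transversal line $L$ meeting it at two distinct points, rather than from a nodal cubic. First I would record that $(\pp^2, Q+L)$ is itself a log Calabi--Yau pair of index one: indeed $Q+L$ has degree $3$, so $K_{\pp^2}+Q+L \sim 0$, and the only non-klt point is the node of $Q+L$ where the conic meets the line transversally at each of the two intersection points. I would then explain why this pair is crepant birational to $(\pp^2,\Sigma^2)$, so that producing a crepant birational map $(\pp^2,Q+L)\dashrightarrow (X,B)$ suffices to exhibit a toric birational model of $(X,B)$. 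This last equivalence is the analogue of the final sentence of Lemma~\ref{lem:Gor-del-Pezzo-cubic}; I would obtain it by a short sequence of explicit Cremona transformations (or by checking that $(\pp^2,Q+L)$ has complexity zero directly, since $\dim \pp^2 + \dim \Cl_\qq(\pp^2) - |Q+L| = 2+1-3 = 0$, forcing toricity by~\cite[Theorem 1.2]{BMSZ18} after a suitable model is reached).

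The heart of the argument is an explicit blow-up recipe realizing the minimal resolution of $X(A_4+A_4)$ starting from $(\pp^2, Q+L)$. The plan is to choose the conic $Q$, the line $L$, and auxiliary points of tangency so that performing a prescribed sequence of point blow-ups over the intersection and tangency data of $Q$ and $L$ produces exactly two disjoint $A_4$ chains of $(-2)$-curves in the exceptional locus, with the strict transform of $Q+L$ (or an appropriate strict transform of one auxiliary $(-1)$-curve, as in the final case of Lemma~\ref{lem:Gor-del-Pezzo-cubic}) serving as the boundary $B$. Concretely I expect to blow up four or five times along the strict transform of $Q$ over one tangency/intersection point to build the first $A_4$ configuration, and symmetrically over a second point to build the second $A_4$ configuration, matching the $A_4+A_4$ singularity type in the classification of~\cite[Theorem 1.2]{Ye02}. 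After contracting the chains I land on a Gorenstein del Pezzo surface of Picard rank one with two $A_4$ singularities, which by that classification is $X(A_4+A_4)$. Since each blow-up is along the strict transform of the boundary, the total transform of $K_{\pp^2}+Q+L$ stays crepant throughout, so the induced map $(\pp^2,Q+L)\dashrightarrow (X,\hat B)$ is crepant birational and the resulting pair $(X,\hat B)$ is log canonical.

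The main obstacle will be arranging the intersection multiplicities so that the two exceptional chains are each of type $A_4$ (five $(-2)$-curves) rather than of $D$- or $E$-type, and are disjoint from one another; this is a delicate bookkeeping of orders of contact of $Q$ and $L$ with the successive strict transforms, analogous to the contact-order analysis in Lemma~\ref{lem:Gor-del-Pezzo-cubic} where contact of $1$ or $2$ forced intersections with unwanted exceptional curves. I would verify the configuration by computing self-intersections and intersection numbers of the exceptional curves at each stage, confirming that the dual graph of the total exceptional locus is a disjoint union of two $A_4$ Dynkin diagrams. A secondary point to check is that the boundary $\hat B$ one ends up with is indeed effective, reduced, and crosses the singular locus in the expected (log canonical) way, so that $(X,\hat B)$ is genuinely a log canonical index-one log Calabi--Yau pair; since the map is crepant and $(\pp^2,Q+L)$ is log canonical of index one, this follows formally, but I would remark on it explicitly to close the argument.
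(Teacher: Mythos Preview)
Your overall strategy---build the minimal resolution of $X(A_4+A_4)$ by explicit blow-ups starting from $(\pp^2,Q+L)$, then note that $(\pp^2,Q+L)\simeq_{\rm bir}(\pp^2,\Sigma^2)$---matches the paper's. There are, however, two small errors and one substantive gap.

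The small errors: an $A_4$ singularity is resolved by a chain of \emph{four} $(-2)$-curves, not five; and in your parenthetical, $|Q+L|$ is the sum of the coefficients, which is $2$, not the degree $3$, so $c(\pp^2,Q+L)=2+1-2=1$. The pair $(\pp^2,Q+L)$ is not toric, and that shortcut does not apply. The paper simply observes that a single standard Cremona transformation (centered at the two points of $Q\cap L$ and one further point of $Q$) takes $(\pp^2,Q+L)$ to $(\pp^2,\Sigma^2)$.

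The substantive gap is the blow-up recipe. Since $K_X^2=1$ for $X(A_4+A_4)$, the minimal resolution has Picard rank $9$ and arises from $\pp^2$ by exactly eight blow-ups, while the two $A_4$ chains account for eight $(-2)$-curves. Your plan of ``four or five'' blow-ups along the strict transform of $Q$ over each of two symmetric points cannot achieve this: four blow-ups at a smooth point of $Q$ yield only an $A_3$ chain of $(-2)$-curves (six total, too few), while five at each point gives ten blow-ups (too many). The paper's construction is not symmetric in this na\"ive sense. It introduces two auxiliary tangent lines $L_2$ (tangent to $Q$ at $P_2\in Q\cap L_1$) and $L_3$ (tangent to $Q$ at a further point $P_3$), and performs eight blow-ups: once at $P_1\in Q\cap L_1$, once at $P_4:=L_1\cap L_3$, three times over $P_2$ along $L_2$, and three times over $P_3$ along $Q$. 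The strict transforms of $Q$, $L_1$, $L_2$, and $L_3$ themselves become $(-2)$-curves and supply the missing pieces of the two $A_4$ chains. This use of auxiliary tangent lines whose strict transforms enter the exceptional configuration is the idea your sketch is missing.
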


\begin{proof}
    
    Let $Q$ be a conic with secant line $L_1$. Write $P_1$ and $P_2$ for the points of intersection between $Q$ and $L_1.$ Let $L_2$ be a line tangent to $Q$ at $P_2,$ and let $L_3$ be a line tangent to $Q$ at a point $P_3\in Q \setminus L_1.$ Write $P_4$ for the point of intersection between $L_3$ and $L_1.$ Blow up once at $P_1$ and once at $P_4.$ Blow up three times above $P_2$ along $L_2$ and three times above $P_3$ along $Q.$ We obtain the minimal resolution of $X(A_4+A_4).$

    As $(\mathbb{P}^2,Q+L)$ is crepant birational to $(\mathbb{P}^2,\Sigma^2)$, via one standard Cremona transformation, we obtain the desired result.
\end{proof}

\begin{lemma}\label{lem:Gor-del-Pezzo-lines}
Let $X$ be a Gorenstein del Pezzo surface of Picard rank one, whose singularity type is $\{A_8,A_7+A_1,A_1+A_2+A_5,A_1+A_1+A_3,A_1+A_5,A_7 \}$ and let $(\mathbb{P}^2,L_1+L_2+L_3)$ be the projective plane with a the three coordinate axes. Then, there exists a crepant birational map $(\mathbb{P}^2,L_1+L_2+L_3)\dashrightarrow (X,B)$, where $(X,B)$ is log canonical.
\end{lemma}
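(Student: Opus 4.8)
The plan is to proceed exactly as in the preceding lemmas (Lemmas~\ref{lem:Gor-del-Pezzo-cubic} and~\ref{lem:Gor-del-Pezzo-conic}): for each singularity type in the list, I would exhibit an explicit configuration of the three coordinate lines $L_1+L_2+L_3$ together with a prescribed sequence of infinitely-near blow-ups whose resulting minimal resolution matches the dual graph of the given Gorenstein del Pezzo surface of Picard rank one. The starting pair is $(\pp^2, L_1+L_2+L_3) = (\pp^2,\Sigma^2)$ itself, which is already a toric log Calabi--Yau pair, so \emph{no} Cremona transformation is needed at the end (unlike the cubic and conic cases); this is the structural simplification that makes the three-line configuration the natural model for these types. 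Since $A_n$ chains and $D_4$-type stars are built from nodal intersections and tangential contacts of lines and their strict transforms, and each $A_1$ summand corresponds to a single node where two of the coordinate lines meet, all the target types in the list are realizable as iterated blow-ups centered at the three torus-fixed points and at infinitely-near points along the strict transforms of the $L_i$.

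The key steps, carried out type by type, are as follows. First I would fix the three coordinate lines meeting pairwise in the three torus-fixed points $p_1=L_2\cap L_3$, $p_2=L_1\cap L_3$, $p_3=L_1\cap L_2$. For each singularity type I would specify how many times to blow up over each $p_i$ and along which strict transform (of an $L_j$ or of an exceptional curve) to produce the desired Dynkin configuration of $(-2)$-curves; the arithmetic constraint throughout is that the total number of blow-ups equals $9 - \rho$ with $\rho=1$, i.e.\ $8$ blow-ups, matching the sum of the ranks of the $A$-type summands. For instance the pure chain $A_8$ arises from eight successive blow-ups strung along a single line and its exceptional divisors, while the types with an $A_1$ summand (such as $A_7+A_1$ or $A_1+A_5$) reserve one blow-up at a second torus-fixed point to create the isolated $(-2)$-curve. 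Having built the minimal resolution $Y$, I would let $\hat{C}$ denote the strict transform of $L_1+L_2+L_3$ and contract the $(-2)$-curves to obtain $(X,B)$, noting that because $\Sigma^2$ is anticanonical on $\pp^2$ the map $(\pp^2,\Sigma^2)\dashrightarrow (X,B)$ is automatically crepant and $(X,B)$ is log canonical with $K_X+B\sim 0$.

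The main obstacle I anticipate is not any single hard argument but rather the case-by-case verification that each prescribed blow-up sequence genuinely produces the claimed dual graph, together with checking that the intersection configuration of the three lines can be arranged to meet the tangency/incidence conditions that the more intricate types (e.g.\ $A_1+A_2+A_5$, with its mixed chain lengths) require. In particular one must ensure the three lines and their infinitely-near points are in sufficiently general position except where a prescribed tangency is forced, so that no unintended extra singularities appear; verifying the combinatorics of contact orders along $L_i$ is the delicate bookkeeping. Once the resolutions are matched to the types in the list, the crepant log Calabi--Yau conclusion follows formally, exactly as in the final paragraphs of the two preceding lemmas, since $(\pp^2,\Sigma^2)$ is the toric model of birational complexity zero and the constructed maps preserve the log canonical divisor.
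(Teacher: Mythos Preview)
Your overall plan---explicit blow-up recipes over $(\pp^2,\Sigma^2)$ for each singularity type, with no Cremona needed at the end---is exactly the paper's approach. However, several of the details you commit to are incorrect and would derail the execution.

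First, the arithmetic constraint is not ``always $8$ blow-ups.'' The number of blow-ups from $\pp^2$ to the minimal resolution equals the total rank of the singularities, which ranges over $\{5,6,7,8\}$ in your list (e.g.\ $A_1+A_1+A_3$ needs $5$, $A_7$ needs $7$). Second, and more seriously, a single blow-up at a torus-fixed point does \emph{not} create an isolated $(-2)$-curve; it creates a $(-1)$-curve. In the paper's constructions the isolated $A_1$ summands arise from blowing up at a \emph{smooth} (non-nodal) point of the boundary and then once more along the line, or similar two-step moves. Third, your description suggests all centers are nodes of the evolving boundary (torus-fixed points and their infinitely-near analogues); but blowing up only at nodes keeps you in the toric category, and most surfaces on the list ($A_8$, $A_7+A_1$, $A_1+A_2+A_5$, $A_1+A_5$, $A_7$) are not toric. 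The paper's recipes explicitly use non-nodal centers: general points $Q_i\in L_i\setminus\bigcup_{j\neq i}L_j$ and \emph{interior} points of exceptional $(-1)$-curves. These are canonical (not log canonical) places of $(\pp^2,\Sigma^2)$, so the crepant boundary does not pick them up, and this is precisely what allows the resulting $(X,B)$ to be non-toric while the map remains crepant. Finally, the reference to ``tangency conditions'' is a leftover from the cubic/conic cases and has no meaning for three transverse lines; the genuine bookkeeping here is tracking which centers are nodes versus smooth boundary points.
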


\begin{proof}
    Let $L_1$, $L_2$, $L_3$ be three lines in $\mathbb{P}^2$ where $P_1$ is the intersection of $L_1$ and $L_3$ and $P_2$ is the intersection of $L_2$ and $L_3$.
    Blow up $3$ times over $P_1$ at the strict transform of $L_1$ and $3$ times over $P_2$ at the strict transform of $L_1$. Next, blow up one interior point of each of the two exceptional $(-1)$-curves. We obtain the minimal resolution of $X(A_8).$ 

    Let $L_1$, $L_2$, $L_3$ be three lines in $\mathbb{P}^2$. Write $P_1$ for the point of intersection between $L_1$ and $L_2,$ $P_2$ for the the point of intersection between $L_2$ and $L_3,$ and $Q$ for a point in $L_1\setminus(L_2\cup L_3).$ Blow up twice over $Q$ along $L_1$ and three times over $P_2$ along $L_3.$ Blow up twice over $P_1$ along $L_2.$ Finally, blow up once at an interior point of the exceptional $(-1)$-curve over $P_1.$ We obtain the minimal resolution of $X(A_7+A_1).$

    Let $L_1$, $L_2$, $L_3$ be three lines in $\mathbb{P}^2$ where $P_1$ is the intersection of $L_1$ and $L_3$. Let $Q_1$ and $Q_2$ be points in $L_1$ and $L_2$ respectively, but in no other $L_i$.
    If we blow up $3$ times over $P_1$ along the strict transform of $L_3$, we blow up $2$ times over $Q_1$ along the strict transform of $L_1$ and we blow up $3$ times over $Q_2$ along the strict transform of $L_2$, then we have the minimal resolution of $X(A_1+A_2+A_5)$

    Let $L_1$, $L_2$, $L_3$ be three lines in $\mathbb{P}^2$ where $P_1$ is the intersection of $L_1$ and $L_3$ and $P_2$ is the intersection of $L_2$ and $L_3$.
    If we blow up $3$ times over $P_1$ along the strict transform of $L_1$ and we blow up $2$ times over $P_2$ along the strict transform of $L_3$, then we have the minimal resolution of $X(A_1+A_1+A_3)$.
    
    If furthermore we blow up once over $P_1$ at a point in the $(-1)$-curve, not in any $(-2)$-curve, then we have the minimal resolution of $X(A_1+A_5)$.
    
    If furthermore we blow up once more over $P_2$ at a point in the $(-1)$-curve, not in any $(-2)$-curve, then we have the minimal resolution of $X(A_7)$.

\end{proof}

\begin{proof}[Proof of Theorem~\ref{thm:bir-comp-Del-Pezzo-rank-1}]
    By \cite[Theorem 1.2]{Ye02}, the surfaces in Lemmas~\ref{lem:Gor-del-Pezzo-cubic},~\ref{lem:Gor-del-Pezzo-conic}, and \ref{lem:Gor-del-Pezzo-lines}, are all the Gorenstein del Pezzo surface of rank $1$, other than those listed in the statement of the theorem. Therefore, all the required surfaces admit a $1$-complement $B$ with $c_{\rm bir}(X,B)=0$. By Lemma~\ref{lem:no-one-complement} and \cite[Example 3.13]{Mor22}, the surfaces listed in the statement of the Theorem do not admit a $1$-complement with coregularity $0$, those a fortiori they cannot admit a $1$-complement of birational complexity $0$.
\end{proof}

\section{Torus exceptional degree}
In this section, we prove several statements about the torus exceptional degree of log Calabi--Yau pairs.
First, we are concerned with the general picture, and then we focus on log Calabi--Yau surfaces.

\subsection{Log Calabi--Yau pairs}
In this subsection, we prove the positivity of the torus exceptional degree
and the characterization
of log Calabi--Yau pairs 
with torus exceptional degree zero.

\begin{proof}[Proof of Theorem~\ref{introthm:t-equal-zero}]
Let $(X,B)$ be a log Calabi--Yau pair of dimension $n$ and index one. 
By definition, we have that 
$\mathcal{T}_t(X,B)\geq \mathcal{T}(X,B)\geq 0$. This proves (1).
If $\mathcal{T}_t(X,B)$ is finite, then the log Calabi--Yau pair admits a crepant birational map
$(\pp^n,\Sigma^n)\dashrightarrow (X,B)$ so its birational complexity is zero.
On the other hand, assume that $(X,B)$ has birational complexity zero.
Then, by the proof of Theorem~\ref{introthm:covering-bcomp0},
there is a finite set $\{\phi_i\}_{i\in I}$
of crepant birational maps
$\phi_i\colon (\pp^n,\Sigma^n)\dashrightarrow (X,B)$ that divisorially cover $(X,B)$.
Thus, we get 
\[
\mathcal{T}_t(X,B) \leq \sum_{i\in I} \mathcal{T}(\phi_i), 
\]
and the right-hand side is finite. 
This proves (2).

We turn to prove (3).
If $(X,B)$ is divisorially covered by almost tori, then there are birational maps
\[
\varphi_i \colon \mathbb{P}^n 
\dashrightarrow X, 
\]
that induce embeddings 
$\varphi_i \colon \mathbb{G}_m^n\setminus Z_i 
\hookrightarrow X\setminus B$ where $Z_i$ is a closed subvariety of $\mathbb{G}_m^n$ of codimension at least $2$.
Let $p_i\colon Y_i\rightarrow \mathbb{P}^n$ 
and $q_i\colon Y_i\rightarrow X$ give a resolution of the birational map $\phi_i$.
Let $\Delta_i:={p_i}_*q_i^*B$.
Since $\mathbb{G}_m^n \setminus Z_i \hookrightarrow X\setminus B$ is an embedding,
we conclude that $\Delta_i$ is supported on 
$\mathbb{P}^n \setminus \mathbb{G}_m^n$.
This means that $\Delta_i$ is supported on the hyperplane coordinates.
As $(\pp^n,\Delta_i)$ is a sub log Calabi--Yau pair, we conclude that $\Delta_i=\Sigma^n$ for each $i$.
Thus, we have a finite set of crepant birational maps
\[
\phi_i\colon (\pp^n,\Sigma^n) \dashrightarrow (X,B),
\]
that divisorially cover $(X,B)$.
Furthermore, for each $i$, we have 
$\mathcal{T}(\phi_i)=0$.
Indeed, the birational maps
$\phi_i\colon \pp^n\dashrightarrow X$ are small in the algebraic torus.
We conclude that $\mathcal{T}(X,B)=\mathcal{T}_t(X,B)=0$.

Now, assume that $\mathcal{T}(X,B)=0$.
This means that there exists a crepant birational map
\[
\phi \colon (\pp^n,\Sigma^n) \dashrightarrow (X,B),
\]
such that $\mathcal{T}(\phi)=0$.
In other words, no exceptional divisor of $\phi$ intersects the algebraic torus $\mathbb{G}_m^n$.
Let 
\[
U_1:=\phi(\pp^n\setminus (\Sigma^n \cup {\rm Ex}(\phi))).
\]
Then $U_1$ is an almost torus that admits an embedding $U_1\hookrightarrow X\setminus B$.
Let $E_2,\dots,E_k$ be the divisorial
prime components
of $(X\setminus B)\setminus U_1$.

For each $i\in \{2,\dots,k\}$, we show that there exists a crepant birational map
$\phi_i\colon (\pp^n,\Sigma^n)\dashrightarrow (X,B)$ such that the center of $E_i$ on $\pp^n$ is a divisor intersecting $\mathbb{G}_m^n$
and $\phi_i$ is of cluster type, i.e., induces a small birational map on $\mathbb{G}_m^n$.
This would imply that the open sets
\[
U_i:=\phi_i(\pp^n\setminus (\Sigma^n \cup {\rm Ex}(\phi_i)))
\]
are almost tori that cover $X\setminus B$ divisorially. 

It suffices to show the statement for $E:=E_2$.
The divisor $E$ is exceptional over the domain $\pp^n$ of $\phi$.
Then, we can apply Lemma~\ref{lem:special-model-complexity-1}
to the pair $(\pp^n,\Sigma^n)$ and the valuation $E$.
We obtain a crepant birational map 
$\phi\colon (Y,B_Y)\dashrightarrow (X,B)$ satisfying the following conditions:
\begin{enumerate}
    \item we have $c(Y,B_Y)=1$,
    \item the center of $E$ on $Y$ is a prime divisor $E_Y$, 
    \item the variety $Y$ is $\qq$-factorial and admits a fibration $p\colon Y\rightarrow Z$ to a variety of dimension $n-1$,
    \item there is a prime component $F_Y$ of $p^{-1}(p(E_Y))$ that dominates $E_Y$, is not contained in $B_Y$, and it is different from $E_Y$, and 
    \item the pair $(Y,B_Y+\epsilon F_Y)$ is dlt for $\epsilon>0$ small enough.
\end{enumerate}
Furthermore, we know that the crepant birational map $(Y,B_Y)\dashrightarrow (\pp^n,\Sigma^n)$ only extracts $E_Y$ and 
log canonical places of $(\pp^n,\Sigma^n)$.
By condition (5),
we can run a $(K_Y+B_Y+\epsilon F_Y)$-MMP over $Z$. This MMP terminates with a toric log Calabi--Yau pair $(T,B_T)$ due to conditions (1) and (3).
Let $E_T$ be the center of $E$ on $T$.
Note that $(T,B_T) \dashrightarrow (\pp^n,\Sigma^n)$ is a crepant birational map that only extracts $E_T$ and log canonical places of $(\pp^n,\Sigma^n)$.
Since the center of $E_T$ on $X$ is a divisor, we conclude that the crepant birational map
$f\colon (T,B_T)\dashrightarrow (X,B)$ is small on $T\setminus B_T$.
Let $g\colon (T,B_T)\dashrightarrow (\pp^n,\Sigma^n)$ be any crepant toric birational map
to $(\pp^n,\Sigma^n)$.
Toric birational maps are isomorphisms on the algebraic torus $\mathbb{G}_m^n$.
We conclude that the crepant birational map
\[
\phi_2:= g^{-1}\circ f \colon 
(\pp^n,\Sigma^n)\dashrightarrow (X,B)
\]
is of cluster type 
and the center of $E$ on its domain $\pp^n$ is a divisor that intersects $\mathbb{G}_m^n$ non-trivially.
Hence, the almost tori $U_2:=\phi_2(\pp^n\setminus(\Sigma^n\cup {\rm Ex}(\phi_2)))$ intersects the divisor $E_2$ non-trivially. This finishes the proof of (3).

Now, we turn to prove (4).
Assume that $X$ is Fano and $\mathcal{T}(X,B)=0$.
By (3), we know that $(X,B)$ is divisorially covered by finitely many almost tori.
It suffices to show that, in this case, such almost tori are indeed algebraic tori.
Let $\{\phi_i\}_{i\in I}$ be a finite set of crepant birational maps
$\phi_i\colon (\pp^n,\Sigma^n)\dashrightarrow (X,B)$ of cluster type
such that $V_i:=\phi_i(\mathbb{G}_m^n\setminus {\rm Ex}(\phi_i))$ divisorially cover $X\setminus B$.   
Let $P_1,\dots,P_k$ be the prime exceptional divisors of $\phi_i^{-1}$ that are not contained in $B$.
As $X$ is Fano, the variety $X\setminus B$ is affine, and so 
$U_i:=X\setminus \supp(B+m_1P_1+\dots+m_kP_k)$ is affine, where $m_i$ is the Cartier index of $P_i$.
The birational map $\phi_i^{-1}$ induces a small birational map between the affine variety $U_i$ and the affine variety $\mathbb{G}_m^n$.
Note that $V_i$ and $U_i$ are small birational.
So, $\phi_i^{-1}$ induces an isomorphism between $U_i$ and $\mathbb{G}_m^n$.
We conclude that $X\setminus B$ is divisorially covered by finitely many algebraic tori.
\end{proof}

\subsection{Log Calabi--Yau surfaces}
In this subsection, we study the torus exceptional degree of log Calabi--Yau surfaces. 
In order to prove Theorem~\ref{introthm:bound-ted-gor-dP}, we first establish some lemma regarding particular birational maps in $\mathbb{P}^2$.

\begin{lemma}\label{lem:ted-cubic-P2}
    Let $C$ be a nodal cubic in $\mathbb{P}^2$ with node $N$ and two other points $P_1$ and $P_2$. Let $L_i$ be some lines, each one passing through at least one of the points $P_1$, $P_2$ or $N$. Let $Q_j$ be some conics, each one passing through at least two of the points $P_1$, $P_2$ or $N$. There exists a crepant birational map $\varphi:(\mathbb{P}^2,\Sigma^2)\dashrightarrow (\mathbb{P}^2,C)$, with $\tau (\varphi) \leq 3$, such that the strict transform of each $L_i$ is empty or a curve of degree at most $2$, and the strict transforms of each $Q_j$ is empty or a curve of degree at most $4$.
\end{lemma}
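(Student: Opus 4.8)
The plan is to realize the inverse map $\varphi^{-1}\colon(\pp^2,C)\dashrightarrow(\pp^2,\Sigma^2)$ as a composition $\sigma_2\circ\sigma_1$ of two standard (quadratic Cremona) transformations, exactly as in the proof of Lemma~\ref{lem:Gor-del-Pezzo-cubic}, and then to read off both the torus exceptional degree and the strict transforms of the $L_i$ and $Q_j$ from the classical degree rule for quadratic transformations. First I would take $\sigma_1$ to be the quadratic transformation based at the node $N$ and the two points $P_1,P_2$. Blowing up $N$ once separates the two branches of $C$, and since $C$ has multiplicity $2$ at $N$ and multiplicity $1$ at each of $P_1,P_2$, the strict transform of $C$ has degree $2\cdot 3-(2+1+1)=2$; a short discrepancy computation (as in Lemma~\ref{lem:special-model-complexity-1}) shows that, after adding the image $M_N$ of the exceptional divisor over $N$ with coefficient one, $\sigma_1$ is crepant and carries $(\pp^2,C)$ to $(\pp^2,C_1+M_N)$, where $C_1$ is a smooth conic and $M_N$ is a secant line meeting it at the two points $R_1,R_2$ arising from the two branches of the node. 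I would then let $\sigma_2$ be the quadratic transformation based at $R_1,R_2$ and a third point $R_3$; taking $R_3:=u_3=\sigma_1(\overline{P_1P_2})$, which lies on $C_1$, makes $\sigma_2$ crepant and sends $(\pp^2,C_1+M_N)$ to a triangle, i.e. to $(\pp^2,\Sigma^2)$. The resulting $\varphi=\sigma_1^{-1}\circ\sigma_2^{-1}$ is the desired crepant birational map, and it depends only on $(C,N,P_1,P_2)$, so a single $\varphi$ will serve for all the $L_i$ and $Q_j$ at once.

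For the degree bounds I would apply the rule $d\mapsto 2d-\sum_i m_i$, with $m_i$ the multiplicity at the $i$-th base point, once for each of $\sigma_1,\sigma_2$. Because each $L_i$ passes through at least one of $N,P_1,P_2$, its image under $\sigma_1$ is either contracted (when $L_i$ is one of the edges $\overline{NP_1},\overline{NP_2},\overline{P_1P_2}$, giving an empty strict transform) or again a line; a line has image of degree at most $2\cdot1-0=2$ under $\sigma_2$, so the strict transform of $L_i$ has degree at most $2$. Similarly, since each $Q_j$ passes through at least two of $N,P_1,P_2$ (each with multiplicity one, as $Q_j$ is smooth at the node), its image under $\sigma_1$ has degree at most $2\cdot2-2=2$, whence its image under $\sigma_2$ has degree at most $2\cdot2-0=4$; thus the strict transform of $Q_j$ is empty or of degree at most $4$. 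These are worst-case estimates depending only on the incidences, so any accidental extra incidence with the base points only lowers the degrees.

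To bound the torus exceptional degree I would list the divisors of the source $(\pp^2,\Sigma^2)$ contracted by $\varphi$ and discard those contained in $\Sigma^2$, since $\tau$ measures only the part meeting $\mathbb{G}_m^2$. A divisor $D$ in the source is contracted by $\varphi$ exactly when $\sigma_2^{-1}(D)$ is contracted by $\sigma_1^{-1}$, which leaves two families: the curves contracted by $\sigma_2^{-1}$, which are precisely the three sides of $\Sigma^2$ (contributing nothing), and the $\sigma_2$-images of the three lines $\overline{u_1u_2},\overline{u_1u_3},\overline{u_2u_3}$ joining the points $u_k=\sigma_1(\mathrm{edge}_k)$ contracted by $\sigma_1^{-1}$. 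Here $\overline{u_1u_2}=M_N$ is an edge of $\sigma_2$, hence collapses to a triangle vertex and yields no divisor, while the crucial feature of the choice $R_3=u_3$ is that each of $\overline{u_1u_3}$ and $\overline{u_2u_3}$ passes through the base point $R_3$ of $\sigma_2$; by the degree rule their source images are lines, contributing $1+1=2$. Hence $\tau(\varphi)\le 2\le 3$.

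The main obstacle is precisely this last step: locating the points $u_1,u_2,u_3$ relative to the base points $R_1,R_2,R_3$ of $\sigma_2$ and verifying that the choice $R_3=u_3$ simultaneously keeps $\sigma_2$ a legitimate crepant quadratic transformation onto a triangle and forces the torus exceptional degree down to at most $3$. One must also dispose of the degenerate configurations, such as $N,P_1,P_2$ collinear, a reducible $Q_j$, or an $L_i$ or $Q_j$ acquiring extra incidences with the base points, but each of these only decreases the relevant degrees and is absorbed by the same worst-case estimates.
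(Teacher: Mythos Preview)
Your construction is exactly the paper's: the same two standard Cremona transformations with the same centers (first at $N,P_1,P_2$, then at the two branch points $R_1,R_2$ of the node together with the image $u_3$ of $\overline{P_1P_2}$), and the degree bounds on the $L_i,Q_j$ are obtained in the same way.

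There is, however, a slip in your bookkeeping for $\tau(\varphi)$. You assert that the three lines contracted by $\sigma_2^{-1}$ are ``precisely the three sides of $\Sigma^2$''. In fact the crepant boundary on the target of $\sigma_2$ is $C_1''+E_{R_1}''+E_{R_2}''$: at $R_3=u_3$ only $C_1$ passes, so the exceptional $E_{R_3}$ gets coefficient~$0$, and the third side of $\Sigma^2$ is the strict transform of the conic, not $E_{R_3}''$. Thus the line $E_{R_3}''=E_P$ lies in $\mathbb{G}_m^2$ and is contracted by $\sigma_2^{-1}$, so your reason for discarding it is wrong. Fortunately this does no harm: because you chose $R_3=u_3$, the point to which $E_P$ collapses is blown back up by $\sigma_1^{-1}$, and $E_P$ is carried onto the line $\overline{P_1P_2}$ in the target, so it is not exceptional for the composite $\varphi$ after all. (And even if it were, it would contribute only degree~$1$, giving the paper's $\tau\le 3$.) Once this is corrected, your argument and the paper's coincide.
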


\begin{proof}
    Let $\varphi_1$ be the standard Cremona transformation with centers at $N,P_1, P_2$. Let $L_i'$, $Q_j$ and $C'$ be the strict transforms of $L_i$, $Q_j$ and $C$ respectively.
    By the choice of centers, $C'$ is a conic, $L_i'$ are lines or points and $Q_j'$ have degrees $1$ or $2$.
    Let $P$ the strict transform of the line $P_1P_2$, and $N_1, N_2$ be the images of the points over $N$ lying on the strict transform of $C$. 
    Let $\varphi_2$ be the standard Cremona transformation with centers $N_1,N_2,P$. Let $C''$, $L_i''$ and $Q_j''$ be the strict transforms of $C'$, $L_i'$ and $Q_j'$. By the choice of centers $C''$ is a line and the degrees of $L_i''$ and $Q_j''$ are at most $2$ and $4$ respectively.
    Let $E_{N_1}$ and $E_{N_2}$ be the images of the blow ups at $N_1$ and $N_2$.

    The birational map $\varphi_1^{-1} \circ \varphi_2^{-1}:(\mathbb{P}^2,C''+E_{N_1}+E_{N_2})\dashrightarrow (\mathbb{P}^2,C)$ is crepant birational, with $\tau(\varphi_1^{-1} \circ \varphi_2^{-1})=3$, satisfying the desired properties.
\end{proof}

\begin{lemma}\label{lem:ted-conic-P2}
        Let $Q$ be a conic in $\mathbb{P}^2$ and $L$ a line transversal to $Q$. Let $C_i$ be some curves in $\mathbb{P}^2$. There exists a crepant birational map $\phi:(\mathbb{P}^2,\Sigma^2)\dashrightarrow (\mathbb{P}^2,Q+L)$, with $\tau(\phi)= 1$, such that the strict transform of each $C_i$ is empty or a curve of degree at most twice the degree of $C_i$.
\end{lemma}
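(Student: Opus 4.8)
The plan is to realize $\phi$ as the inverse of a single standard Cremona transformation, in the spirit of Lemma~\ref{lem:ted-cubic-P2}, but now based at the two nodes of $Q+L$ together with one extra point of the conic. Write $A,B$ for the two intersection points $Q\cap L$, which are nodes since $L$ is transversal to $Q$, and choose $P\in Q$ with $P\notin L$ and $P\neq A,B$. Since $P$ does not lie on the line $\overline{AB}=L$, the three points $A,B,P$ are non-collinear, so there is a standard Cremona transformation $\psi\colon\mathbb{P}^2\dashrightarrow\mathbb{P}^2$ with centers $A,B,P$. I would show that $\psi$ carries $(\mathbb{P}^2,Q+L)$ crepantly to a pair $(\mathbb{P}^2,\Delta)$ whose boundary $\Delta$ is a triangle of lines, and then set $\phi$ to be $\psi^{-1}$ precomposed with the automorphism $\alpha$ of $\mathbb{P}^2$ identifying $(\mathbb{P}^2,\Delta)$ with $(\mathbb{P}^2,\Sigma^2)$.

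First I would compute the crepant transform of $Q+L$ under $\psi$. Resolving $\psi$ by blowing up $A,B,P$ produces exceptional divisors $E_A,E_B,E_P$, and the standard Cremona degree formula $d\mapsto 2d-\sum m_i$ shows that the conic $Q$, passing through all three centers, maps to a line $\ell_Q$, while $L=\overline{AB}$, passing through two centers, is contracted. The crux is the discrepancy bookkeeping: as $A,B$ are nodes of the index-one log Calabi--Yau pair $(\mathbb{P}^2,Q+L)$ we have $a_{E_A}(\mathbb{P}^2,Q+L)=a_{E_B}(\mathbb{P}^2,Q+L)=0$, so $E_A,E_B$ are log canonical places and appear in the crepant transform with coefficient one; whereas $P$ is a smooth point of $Q+L$, giving $a_{E_P}(\mathbb{P}^2,Q+L)=1$, so $E_P$ is discrepant and does not occur. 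Hence $\Delta=\ell_Q+\psi(E_A)+\psi(E_B)$, a union of three lines; checking in coordinates that these are non-concurrent shows $\Delta$ is a genuine triangle, projectively equivalent to $\Sigma^2$. This exhibits $\phi=\psi^{-1}\circ\alpha^{-1}$ as a crepant birational map $(\mathbb{P}^2,\Sigma^2)\dashrightarrow(\mathbb{P}^2,Q+L)$.

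It then remains to compute $\tau(\phi)$ and to bound the strict transforms of the $C_i$. The map $\phi$ is itself a standard Cremona on the $\Sigma^2$ side, and it contracts exactly the three coordinate lines $\psi(E_A),\psi(E_B),\psi(E_P)$ of $\psi$ (transported by $\alpha$); of these, $\psi(E_A)$ and $\psi(E_B)$ are boundary components of $\Delta$, hence of $\Sigma^2$, and so do not meet $\mathbb{G}_m^2$, while $\psi(E_P)$ is \emph{not} a boundary component and is therefore the unique contracted divisor meeting the torus. As it is a line, we get $\tau(\phi)=\deg\bigl(\overline{\psi(E_P)|_{\mathbb{G}_m^2}}\bigr)=1$. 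For a curve $C_i$ of degree $d_i$ in the target, its strict transform under $\phi$ equals $\alpha_*\psi_*(C_i)$, whose degree is $2d_i-m_A-m_B-m_P\le 2d_i$ by the same formula, and which is empty precisely when $C_i$ is one of the three lines contracted by $\psi$. This yields the asserted degree bound.

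I expect the only delicate point to be the bookkeeping that isolates exactly one torus-meeting exceptional divisor. One must keep straight that the boundary triangle $\Delta=\ell_Q+\psi(E_A)+\psi(E_B)$ is \emph{not} the coordinate triangle $\psi(E_A)+\psi(E_B)+\psi(E_P)$ of the Cremona $\psi$: the two triangles share the edges $\psi(E_A),\psi(E_B)$ but differ in the third edge, with $\ell_Q$ replacing $\psi(E_P)$. It is precisely the discrepancy $a_{E_P}=1$ at the smooth point $P$ that pushes $\psi(E_P)$ out of the boundary and makes it contribute the single unit to $\tau(\phi)$. Once this is in place, the degree bound on the $C_i$ is an immediate consequence of the Cremona degree formula and needs no further geometry.
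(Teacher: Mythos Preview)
Your proof is correct and takes essentially the same approach as the paper: perform the standard Cremona transformation centered at the two points of $Q\cap L$ together with a third point of $Q$, and observe that $\phi$ is its inverse. The paper's proof is a two-line sketch of exactly this construction, while you have carefully filled in the discrepancy bookkeeping, the verification that $\tau(\phi)=1$, and the degree bound via the Cremona degree formula.
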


\begin{proof}
    Let $P_1,P_2$ be the intersection points of $Q$ and $L$, and let $P_3$ be another point in $Q$.
    Let $\varphi$ be the standard Cremona transformation center at $P_1,P_2,P_3$. 
    Then $\varphi^{-1}$ satisfies the desired properties.
\end{proof}

\begin{proof}[Proof of Theorem~\ref{introthm:bound-ted-gor-dP}]

By Lemma~\ref{lem:Gor-del-Pezzo-cubic}, \ref{lem:Gor-del-Pezzo-conic} and \ref{lem:Gor-del-Pezzo-lines} any Gorenstein del Pezzo surface of Picard rank one, that has a 1-complement of birational complexity $0$, admits a crepant birational map from $(\mathbb{P}^2, C)$,  $(\mathbb{P}^2, Q+L)$ or $(\mathbb{P}^2, \Sigma^2)$ of the following form:
\[
\xymatrix{
& \ar[ld]_{f} (Y,B'_Y) \ar[rd]^{g} & \\
(X,B')  & &\ar@{-->}[ll]_{\phi}(\mathbb{P}^2,B'_{\mathbb{P}^2}).
}
\]
Here, $f$ is a resolution of singularities and $g$ is a sequence of blow ups. Let the log pull-back of $(X,B)$ be $(Y,B_Y)$ and the pushforward $g_*B_Y$ be $B_{\mathbb{P}^2}$. Since $(X,B)$ is log Calabi--Yau of index one and the singularities of $X$ are canonical, the pair $(Y,B_Y)$ is also log Calabi--Yau of index one and so is $(\mathbb{P
}^2,B_{\mathbb{P}^2})$.

We separate in cases, depending on the isomorphism class of $X$.

\begin{enumerate}
    \item The surface $X$ satisfies the hypothesis of  Lemma~\ref{lem:Gor-del-Pezzo-cubic}:

    By the proof of Lemma~\ref{lem:Gor-del-Pezzo-cubic}, the exceptional divisor of $\phi$ consists of one, two or three lines, all of which pass through one of two points $R_1$, $R_2$ in $C$. Furthermore, by the way the blow ups are performed in $g$, there are $(-1)$-curves in $Y$ that map to the points $R_1$ and $R_2$. These $(-1)$-curves have intersection $1$ with $B_Y$, thus $B_{\mathbb{P}^2}$ must pass through $R_1$ and $R_2$. Hence, if $B_{\mathbb{P}^2}$ is a nodal cubic, or a conic and a line transversal to it, then we satisfy the hypothesis of Lemmas~\ref{lem:ted-conic-P2} or \ref{lem:ted-cubic-P2}.
    
    As the pair $(\mathbb{P}^2,B_{\mathbb{P}^2})$  is of coregularity $0$, there are three possibilities for $B_{\mathbb{P}^2}$.

    \begin{enumerate}
        \item If $B_{\mathbb{P}^2}$ are $3$ lines, then by the crepant birational map $\phi$, we have that $\mathcal{T}(X,B)\leq 3$, since the exceptional locus in any of the cases of proof of Lemma~\ref{lem:Gor-del-Pezzo-cubic} consists of at most three lines.

        \item If $B_{\mathbb{P}^2}$ is a conic and a line, then let $\varphi_2:(\mathbb{P}^2,\Sigma^2)\dashrightarrow (\mathbb{P}^2, B_{\mathbb{P}^2})$  be the crepant birational map from Lemma~\ref{lem:ted-conic-P2}. Thus by the crepant birational map $\phi \circ \varphi_2:(\mathbb{P}^2,\Sigma^2)\dashrightarrow (X,B)$, we have that $\mathcal{T}(X,B)\leq 7$, since the exceptional locus of $\phi$ in any of the cases of Lemma~\ref{lem:Gor-del-Pezzo-cubic} consists of at most three lines.

        \item If $B_{\mathbb{P}^2}$ is a nodal cubic, then let $\varphi_3:(\mathbb{P}^2,\Sigma^2)\dashrightarrow (\mathbb{P}^2, B_{\mathbb{P}^2})$  be the crepant birational map from Lemma~\ref{lem:ted-cubic-P2}. By the crepant birational map $\phi \circ \varphi_3:(\mathbb{P}^2,\Sigma^2)\dashrightarrow (X,B)$, we have that $\mathcal{T}(X,B)\leq 9$, since the exceptional locus of $\phi$ in any of the cases of Lemma~\ref{lem:Gor-del-Pezzo-cubic} consists of at most three lines.
    \end{enumerate}

    \item The surface $X$ satisfies the hypothesis of Lemma~\ref{lem:Gor-del-Pezzo-conic}

    By the proof of Lemma~\ref{lem:Gor-del-Pezzo-conic}, the exceptional divisor of $\phi$ consists of a line and a conic, whose intersections are $R_1$ and $R_2$. Furthermore, by the way the blow ups are performed in $g$, there are $(-1)$-curves in $Y$ that map to the points $R_1$ and $R_2$. These $(-1)$-curves have intersection $1$ with $B_Y$, thus $B_{\mathbb{P}^2}$ must pass through $R_1$ and $R_2$. Hence, if $B_{\mathbb{P}^2}$ is a nodal cubic, or a conic and a line transversal to it, then we satisfy the hypothesis of Lemmas~\ref{lem:ted-conic-P2} or \ref{lem:ted-cubic-P2}.
    
    As the pair $(\mathbb{P}^2,B_{\mathbb{P}^2})$  is of coregularity $0$, there are three possibilities for $B_{\mathbb{P}^2}$.

    \begin{enumerate}
        \item If $B_{\mathbb{P}^2}$ are $3$ lines, then by the crepant birational map $\phi$, we have that $\mathcal{T}(X,B)\leq 3$, since the exceptional locus of $\phi$ is a line and a conic.

        \item If $B_{\mathbb{P}^2}$ is a conic and a line, then let $\varphi_2:(\mathbb{P}^2,\Sigma^2)\dashrightarrow (\mathbb{P}^2, B_{\mathbb{P}^2})$  be the crepant birational map from Lemma~\ref{lem:ted-conic-P2}. Thus by the crepant birational map $\phi \circ \varphi_2:(\mathbb{P}^2,\Sigma^2)\dashrightarrow (X,B)$, we have that $\mathcal{T}(X,B)\leq 7$, since the exceptional locus of $\phi$ consists of a line and a conic.

        \item If $B_{\mathbb{P}^2}$ is a nodal cubic, then let $\varphi_3:(\mathbb{P}^2,\Sigma^2)\dashrightarrow (\mathbb{P}^2, B_{\mathbb{P}^2})$  be the crepant birational map from Lemma~\ref{lem:ted-cubic-P2}. By the crepant birational map $\phi \circ \varphi_3:(\mathbb{P}^2,\Sigma^2)\dashrightarrow (X,B)$, we have that $\mathcal{T}(X,B)\leq 9$, since the exceptional locus of $\phi$ consists of a line and a conic.
    \end{enumerate}

    \item The surface $X$ satisfies the hypothesis of Lemma~\ref{lem:Gor-del-Pezzo-lines}

    By the proof of Lemma~\ref{lem:Gor-del-Pezzo-lines}, the exceptional divisor of $\phi$ consists of three lines. Furthermore by the way the blow ups are performed in $g$, there are $(-1)$-curves in $Y$ that map to two of the  intersection points of these lines, let us call them $R_1$ and $R_2$. These $(-1)$-curves have intersection $1$ with $B_Y$, thus $B_{\mathbb{P}^2}$ must pass through $R_1$ and $R_2$. Hence, if $B_{\mathbb{P}^2}$ is a nodal cubic, or a conic and a line transversal to it, then we satisfy the hypothesis of Lemmas~\ref{lem:ted-conic-P2} or \ref{lem:ted-cubic-P2}.
    
    As the pair $(\mathbb{P}^2,B_{\mathbb{P}^2})$  is of coregularity $0$, there are three possibilities for $B_{\mathbb{P}^2}$.

    \begin{enumerate}
        \item If $B_{\mathbb{P}^2}$ are $3$ lines, then by the crepant birational map $\phi$, we have that $\mathcal{T}(X,B)\leq 3$, since the exceptional locus of $\phi$ consists of three lines.

        \item If $B_{\mathbb{P}^2}$ is a conic and a line, then let $\varphi_2:(\mathbb{P}^2,\Sigma^2)\dashrightarrow (\mathbb{P}^2, B_{\mathbb{P}^2})$  be the crepant birational map from Lemma~\ref{lem:ted-conic-P2}. Thus by the crepant birational map $\phi \circ \varphi_2:(\mathbb{P}^2,\Sigma^2)\dashrightarrow (X,B)$, we have that $\mathcal{T}(X,B)\leq 7$, since the exceptional locus of $\phi$ consists of three lines.

        \item If $B_{\mathbb{P}^2}$ is a nodal cubic, then let $\varphi_3:(\mathbb{P}^2,\Sigma^2)\dashrightarrow (\mathbb{P}^2, B_{\mathbb{P}^2})$  be the crepant birational map from Lemma~\ref{lem:ted-cubic-P2}. By the crepant birational map $\phi \circ \varphi_3:(\mathbb{P}^2,\Sigma^2)\dashrightarrow (X,B)$, we have that $\mathcal{T}(X,B)\leq 9$, since the exceptional locus of $\phi$ consists of three lines.
    \end{enumerate}
    
\end{enumerate}

\end{proof}

Now, we turn to prove Theorem~\ref{introthm:bounded-t-surf}.
First, we state a lemma which is a consequence of~\cite[Lemma 2.12]{GLM23}.

\begin{lemma}\label{lem:bdry-in-smth-locus}
    Let $(X,B)$ be a dlt log Calabi--Yau surface of index one. If $f\colon X\rightarrow Y$ is a composite of divisorial contractions, then $B_Y=f_*B$ is contained in the smooth locus of $Y$.
\end{lemma}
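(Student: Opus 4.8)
\emph{Plan.} The starting observation is that every divisorial contraction in the composite $f$ is crepant for $(X,B)$. Indeed, since $K_X+B\sim 0$, any curve $C$ contracted by a $K_X$-negative divisorial extremal contraction satisfies $(K_X+B)\cdot C=0$. Writing $f$ as a composite $X=X_0\to X_1\to\cdots\to X_m=Y$ of single divisorial contractions and setting $B_i$ to be the pushforward of $B$ on $X_i$, each step is therefore crepant and $K_{X_i}+B_i\sim 0$, so every intermediate pair remains a log Calabi--Yau surface of index one. The base case $m=0$ of the statement, namely that $B\subseteq X^{\rm reg}$, is exactly what \cite[Lemma 2.12]{GLM23} provides for a dlt log Calabi--Yau surface of index one. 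Thus I would proceed by induction on $m$, so that it suffices to treat a single divisorial contraction $g\colon X\to Y$ with contracted curve $C$, assuming $B\subseteq X^{\rm reg}$.

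For the single-contraction step, crepancy gives $B\cdot C=-K_X\cdot C$, and since $g$ is $K_X$-negative we have $-K_X\cdot C>0$, so $C$ meets $B$. The key input \cite[Lemma 2.12]{GLM23} then controls the local structure of $(X,B)$ along $B$: it forces $X$ to be regular along $B$ and pins down the analytic type of the pair near the boundary, from which I would deduce that $C$ is a $(-1)$-curve contained in $X^{\rm reg}$ and that $B\cdot C=1$, with $C$ meeting $B$ transversally at a single smooth point. In the coregularity-zero case relevant to the applications this intersection bound can alternatively be read off from Lemma~\ref{lem:contraction-S^1}. Granting that $C$ is a $(-1)$-curve in the smooth locus, the image $g(C)$ is a regular point of $Y$; away from $g(C)$ the map $g$ is an isomorphism and $B$ lies in $X^{\rm reg}$, while at $g(C)$ the surface $Y$ is smooth. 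Hence $B_Y=g_*B$ is contained in $Y^{\rm reg}$, completing the induction.

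The main obstacle is precisely the claim that the contracted curve $C$, together with its image $g(C)$, avoids the singularities of the ambient surfaces at every stage. A priori a $K_X$-negative extremal curve could pass through a quotient singularity of $X$ lying off $B$, and its contraction could drag $B_Y$ through a singular point of $Y$ (note $g(C)\in B_Y$ because $C\cap B\neq\emptyset$). Excluding this is the geometric heart of the argument and is where the index-one hypothesis is essential: it makes $K_X$ Cartier along $B$, which rigidifies the singularities that can occur along the curves meeting $B$ and is the mechanism underlying \cite[Lemma 2.12]{GLM23}. I would therefore spend the bulk of the proof verifying, via \cite[Lemma 2.12]{GLM23} and the fact that these contractions are steps of the $K_X$-minimal model program, that the class of dlt log Calabi--Yau surfaces of index one with boundary in the regular locus is stable along $f$, so that the inductive hypothesis genuinely persists at each $X_i$.
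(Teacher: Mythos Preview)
The paper does not give a proof: it simply records the lemma as a direct consequence of \cite[Lemma 2.12]{GLM23}. Your inductive scheme is more detailed than anything in the paper, and the overall strategy (reduce to a single contraction, show the image point is smooth, iterate) is the natural one.

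There is, however, a genuine gap at the heart of your single-contraction step. You assert that \cite[Lemma 2.12]{GLM23} lets you deduce that the contracted curve $C$ is a $(-1)$-curve contained in $X^{\rm reg}$. This is false in general. Take a dlt log Calabi--Yau surface $(X,B)$ of index one carrying an $A_1$ point $p\notin B$ and a curve $C$ through $p$ whose strict transform on the minimal resolution is a $(-1)$-curve meeting the exceptional $(-2)$-curve once and meeting $\tilde B$ once; then $K_X\cdot C=-1$, $C^2=-\tfrac12$, and $C$ spans a $K_X$-negative extremal ray, yet $C\not\subset X^{\rm reg}$. The cited lemma controls the boundary, not arbitrary $K$-negative curves, and Lemma~\ref{lem:contraction-S^1} only bounds the number of points in $C\cap B$, not the position of $C$ relative to $X^{\rm sing}$.

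What is true, and what actually makes the induction go through, is the weaker statement that the image point $g(C)$ is smooth on $Y$. One clean way to see this, which avoids analysing $C$ directly: since $(Y,B_Y)$ is lc with $K_Y+B_Y\sim 0$, the surface $Y$ is Gorenstein and klt (the latter because $g$ is $K_X$-negative and $X$ is klt), hence canonical. On the minimal resolution $\sigma\colon\tilde Y\to Y$ one has $K_{\tilde Y}=\sigma^*K_Y$, so the crepant boundary $\tilde B_Y$ satisfies $\tilde B_Y\cdot E=0$ for every $\sigma$-exceptional $(-2)$-curve $E$; when $C\not\subset B$ no component of $\tilde B_Y$ is $\sigma$-exceptional, and the intersection condition forces $\tilde B_Y$ to miss the exceptional locus entirely. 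When $C\subset B$ one checks separately that $C$ is a genuine $(-1)$-curve in $X^{\rm reg}$ (here your adjunction argument does apply, since $C\subset B\subset X^{\rm reg}$). Either way $B_Y\subset Y^{\rm reg}$. For the same reason it is safer to carry the hypothesis ``lc of index one with canonical ambient surface and boundary in the smooth locus'' through the induction rather than ``dlt'', since contracting a boundary component can produce a self-node and cost you the dlt condition while preserving everything you actually need.
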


\begin{proof}[Proof of Theorem~\ref{introthm:bounded-t-surf}]
Let $(X,B)$ be a log Calabi--Yau surface of index one. Let $f\colon(X',B')\rightarrow (X,B)$ be a dlt modification. Then $X'$ is canonical and $B'$ is contained in the smooth locus of $X'.$ We may run a $K_{X'}$-MMP to obtain $$X'\xrightarrow{g}X''\xrightarrow{\pi}Z,$$ with $g$ a composite of divisorial contractions and $\pi$ a Mori fiber space. We note that $X''$ is canonical. By Lemma~\ref{lem:bdry-in-smth-locus}, $B''=g_*B'$ is contained in the smooth locus of $X''.$ Note that for any crepant birational map $\phi:(\pp^2,\Sigma^2)\dashrightarrow (X,B)$ we have 
\[
\mathcal{T}(\phi)\leq \mathcal{T}(g\circ f^{-1}\circ \phi)
\]
since $g$ is a morphism and $f$ only extracts log canonical places of $(X,B)$. It follows that we have an inequality 
\[
\mathcal{T}(X,B)\leq \mathcal{T}(X'',B''),
\]
and thus it suffices to consider index one log Calabi--Yau surfaces $(X,B)$ satisfying the following: 
\begin{enumerate}
    \item $X$ is canonical,
    \item $X$ is equipped with a Mori fiber space $\pi\colon X\rightarrow Z$, and 
    \item $B \subset X_{reg}$ and $B \neq \emptyset.$
\end{enumerate} \par
If $Z$ is a point, then $X$ is a rank one Gorenstein del Pezzo surface. In this case, the statement follows from Theorem~\ref{introthm:bound-ted-gor-dP}.

From now on, we assume that $Z\simeq \pp^1$ and $\pi$ has irreducible fibers. We have $B\cdot F=2$ for every fiber $F$ of $\pi,$ and since $B$ is contained in the smooth locus of $X$ we have that $B\cdot F_{red}$ is an integer for each fiber $F$. Since $\pi$ can only have multiple fibers it follows that either $F$ is contained in the smooth locus of $X$ or that $B\cdot F_{red}=1.$ In particular, there are no singular fibers that contain a node of $B.$ Thus, after possibly performing an elementary transformation centered at some node of $B$, we may assume that $B$ has at least two components and that at least one of them is a fiber.
If $B$ has at least three components, then at least one component $B_0$ satisfies $B_0\cdot F=1$ for each fiber $F.$ But this means there can be no multiple fibers, and so $X$ must be smooth. In particular, $X$ is a Hirzebruch surface. After performing at most one additional elementary transformation centered at a node of $B$, we may assume that $B$ contains two fibers of $\pi$ and hence is the toric boundary for some toric structure on $X$.
In the previous process, we only extracted log canonical places of $(X,B)$, so $\mathcal{T}(X,B)=0$ in this case.

To conclude, we consider the case in which $B$ has only two components. Both components are isomorphic to $\pp^1$, one component $B_1$ is a fiber of $\pi$, and the other component $B_2$ is a $2$-section. Each singular fiber meets $B_2$ in precisely one point, hence it corresponds to a ramification point of the morphism $B_2\to \pp^1$ induced by $\pi.$ By Riemann-Hurwitz, there can be at most two singular fibers. We resolve the singularities of $X$ and run a relative MMP over $\pp^1.$ We obtain a crepant birational map
\[
\xymatrix{
(X,B)\ar[rd]_-{\pi}\ar@{-->}[rr]^-{g} & 
 & (X',B')\ar[dl]^-{\pi'}\\
& \pp^1. & 
}
\]
over $\pp^1$, where $X'$ is a Hirzebruch surface and $B'$ is a $1$-complement consisting of a fiber $B'_1$ of $\pi'$ and a $2$-section $B'_2$. Write $F_1$ and $F_2$ for the two fibers of $\pi'$ meeting $B'_2$ at ramification points of the induced morphism $B'_2\to \pp^1.$ 
Observe that $F_1$ and $F_2$ are canonical places of $(X,B)$ extracted on $X'$.

We note that $X'$ is either $\pp^1\times \pp^1$ or the blow up $\mathbb{F}_1$ of $\pp^2$ at a point. Indeed, let $C\subset X'$ be a section with $C^2=-n$, $n\geq 0$. Since $C$ is distinct from both $B'_1$ and $B'_2$ and since $B'_1\cdot C=1,$ we have $$2-n=-K_{X'}\cdot C=B'\cdot C\geq 1,$$ hence $n\leq 1.$ If $X'\cong \pp^1\times \pp^1,$ then we may perform an elementary transformation centered at a node of $B'$ to obtain a 1-complement of $\mathbb{F}_1$. with two components. Thus, we may assume that $X'\cong \mathbb{F}_1$. Note that the negative section on $X'$ does not meet the 2-section $B'_2.$ Blowing down the negative section, we obtain a complement $B''$ on $\pp^2$ consisting of a line $B''_1$ and a conic $B''_2$ intersecting transversely, and the images of $F_1$ and $F_2$ are tangent lines to $B''_2$. We perform a standard Cremona transformation centered at the points of intersection between $B_2''$ and $F_1,$ $B''_2$ and $F_2$, and one of the intersection points of $B''_1$ and $B''_2$. We obtain a crepant toric model for $(X,B)$ whose exceptional locus consists of at most four lines, namely two of the exceptional divisors of the Cremona together with some subset of the strict transforms of $F_1$ and $F_2.$ It follows in this case that $\mathcal{T}(X,B)\leq 4.$
\end{proof}

\begin{proof}[Proof of Theorem~\ref{introthm:bcomp-zero-surf-aff-bounded}]
    By Theorem~\ref{introthm:bounded-t-surf} any projective surface in $\mathcal{F}_2$ has a log pair structure $(X,B)$, with $\mathcal{T}(X,B)\leq 9$. This implies that there exists a birational map $\varphi: \mathbb{G}_m^2 \dashrightarrow X\setminus B$, with ${\rm Ex}(\varphi)$ of degree at most $9$.
    Thus there exists an open subset of $X$ isomorphic to $\mathbb{A}^2\setminus D$, where $D$ is a curve of degree at most $11$. These open subsets are fibers of the morphism:
    \[
    {\rm Spec}(\kk[x,y,a_{0,0}, a_{0,1}\ldots,a_{11,11}]_{\Sigma_{i,j} a_{i,j}x^iy^{j}})\rightarrow {\rm Spec}(\kk[a_{0,0},a_{0,1}\ldots,a_{11,11}]).
    \]
    Thus $\mathcal{F}_2$ is affinely bounded.
\end{proof}

\section{Cluster type} 
In this section, we prove several statements about the cluster type
of a log Calabi--Yau pair. 
First, we characterize log Calabi--Yau toric pairs as those having cluster type one.
Secondly, we study the cluster type of Fano surfaces.

\subsection{Log Calabi--Yau pairs}
In this subsection, we prove the characterization of log Calabi--Yau toric pairs using the cluster type.

\begin{proof}[Proof of Theorem~\ref{theorem:cluster-type-1}]
Let $(X,B)$ be a log Calabi--Yau pair. 
By definition the cluster type $\mathcal{C}(X,B)$ of $(X,B)$ is either infinite
or a finite number at least one.

If $(X,B)$ is a toric log Calabi--Yau pair, then
$X\setminus B \simeq \mathbb{G}_m^n$ and so 
$\mathcal{C}(X,B)=1$.
Now, assume that $\mathcal{C}(X,B)=1$.
Then, there exists a crepant birational map
\[
\phi\colon (\pp^n,\Sigma^n)\dashrightarrow (X,B)
\]
that is of cluster type
and $U:=\phi(\pp^n\setminus (\Sigma^n\cup {\rm Ex}(\phi)))$ divisorially covers $X\setminus B$.
In other words, every exceptional divisor of $\phi^{-1}$ is contained in $B$.
Thus, there exists a crepant birational morphism 
$\psi \colon (T,B_T)\rightarrow (\pp^n,\Sigma^n)$, that only extracts log canonical places of $(\pp^n,\Sigma^n)$
such that the induced birational map
$\varphi \colon (T,B_T)\dashrightarrow (X,B)$ is a birational contraction.
Henceforth, we get a commutative diagram
\[
\xymatrix{
(T,B_T)\ar[d]^-{\psi}\ar@{-->}[rd]^-{\varphi} & \\
(\pp^n,\Sigma^n)\ar@{-->}[r]^-{\phi} & (X,B).
}
\]
Every log canonical place over a toric pair
is a toric valuation.
Hence, the log Calabi--Yau pair $(T,B_T)$
is a toric log Calabi--Yau pair.
We conclude that $(X,B)$ is toric, since $\varphi$ is a birational contraction ~\cite[Lemma 2.3.2]{BMSZ18} . 
\end{proof}

\subsection{Fano surfaces of cluster type}
In this subsection, we study the cluster type of Fano surfaces.
Our main result is that a Fano surface has cluster type either one, two, or infinite.

\begin{proposition}\label{prop:only-A-type}
Let $X$ be a surface of cluster type. Then $X$ has only $A$-type singularities.
\end{proposition}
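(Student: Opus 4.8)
The plan is to reduce to a local statement at each singular point and show that its minimal resolution graph is a chain, equivalently that each singularity is a cyclic quotient. Since $X$ is of cluster type, I would fix a log Calabi--Yau pair $(X,B)$ of index one with $\mathcal{T}(X,B)=0$ and a crepant birational map $\phi\colon(\pp^2,\Sigma^2)\dashrightarrow(X,B)$ with $\mathcal{T}(\phi)=0$; this embeds a big open subset $U$ of $\mathbb{G}_m^2$ as an open dense subset of $X\setminus B$, so $\pi_1\bigl((X\setminus B)^{\rm reg}\bigr)$ is a quotient of $\pi_1(U)=\pi_1(\mathbb{G}_m^2)=\mathbb{Z}^2$, hence abelian. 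Two reductions come first: every point of $X$ is log canonical for $(X,0)$, because dropping the effective divisor $B$ only increases log discrepancies; and at a point $x\notin B$ the divisor $K_X$ is Cartier, since $K_X+B\sim 0$ and $B=0$ near $x$, so $X$ is Gorenstein there. Finally $(X,B)$ is crepant birational to $(\pp^2,\Sigma^2)$, so it has coregularity zero and $\mathcal{D}(X,B)\simeq_{\rm PL}S^1$. I then split into the cases $x\in B$ and $x\notin B$.

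For a boundary point $x\in B$ I would argue by adjunction. If $(X,B)$ is plt at $x$, then $\lfloor B\rfloor$ is reduced through $x$ and plt surface adjunction forces $X$ to be a cyclic quotient singularity with $\lfloor B\rfloor$ smooth at $x$; cyclic quotient singularities are exactly the $A$-type singularities. If instead $(X,B)$ is log canonical but not plt at $x$, then on a surface this means $x$ is a zero-dimensional log canonical center, whose germ is toric (a cyclic quotient of $(\mathbb{A}^2,\{uv=0\})$), so $X$ is again of $A$-type at $x$.

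For an interior point $x\notin B$ the singularity is Gorenstein and log canonical, hence Du Val, simple elliptic, or a cusp. The simple elliptic and cusp cases are excluded using the dual complex: the exceptional locus of the minimal resolution is then a non-klt set lying over $x$, hence disjoint from $B$, so it would contribute a connected component to $\mathcal{D}(X,B)$ disjoint from the circle carried by $B$ (and, for the simple elliptic case, a non-rational stratum), contradicting $\mathcal{D}(X,B)\simeq_{\rm PL}S^1$ together with the connectedness of log canonical centers. This leaves the Du Val case. Passing to the minimal resolution $\mu\colon(\tilde X,\tilde B)\to(X,B)$, the exceptional $(-2)$-curves $C_i$ over $x$ satisfy $a_{C_i}(\tilde X,\tilde B)=1$, and being complete curves disjoint from $\tilde B$ they cannot lie in the quasi-affine big open subset of $\mathbb{G}_m^2$. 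Comparing with $(\pp^2,\Sigma^2)$ through the crepant structure, each $C_i$ has log discrepancy $1$ over $(\pp^2,\Sigma^2)$, and $\mathcal{T}(\phi)=0$ controls how these curves and their centers meet the torus.

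The hardest step, and the one I expect to be the main obstacle, is to upgrade this to the statement that the Du Val graph is a chain, i.e.\ to exclude $D$- and $E$-types in the interior. Topology alone will not suffice: the $E_8$ link is a homology sphere, so the local fundamental group maps trivially into the abelian group $\pi_1\bigl((X\setminus B)^{\rm reg}\bigr)$, and the abelianness established above gives no obstruction. The geometric input must instead come from the toric model of Lemma~\ref{lem:diagram-cluster-type-surf} together with a local computation over the toric boundary: over a smooth point of $\Sigma^2$, identifying $(\pp^2,\Sigma^2)$ locally with $(\mathbb{A}^2,\{z_1=0\})$, the log discrepancy of a divisor is its order along the transverse coordinate $z_2$, and this equals $1$ exactly for the divisors obtained by iterated blow-ups whose centers remain on the strict transform of the boundary; such towers produce only linear chains, since any branching raises the transverse order above $1$. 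Carrying out this analysis for the interior $(-2)$-configurations, while keeping track of which curves are extracted over the boundary and which are transported from the torus, is the crux, and I expect the careful bookkeeping there, rather than any single conceptual leap, to be where the real work lies.
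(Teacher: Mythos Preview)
Your outline through the boundary case and the reduction to Du Val interior singularities is correct and parallels the paper; your adjunction argument at boundary points is equivalent to the paper's appeal to Lemma~\ref{lem:1-comp-DE}, and the dual complex indeed excludes simple elliptic and cusp singularities. Your observation that log discrepancy one valuations centered at a smooth boundary point of $(\pp^2,\Sigma^2)$ arise only from linear towers is also exactly the right geometric input.

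The gap is that you have not connected this observation to the resolution graph of an interior Du Val point $x$, and Lemma~\ref{lem:diagram-cluster-type-surf} will not do it for you: in that diagram $p_2\colon X_0\to X$ extracts only log canonical places of $(X,B)$, so it is an isomorphism over the klt point $x$ and $X_0$ still carries the singularity. The paper instead builds a different model. One chooses a toric blow-up $(T,B_T)\to(\pp^2,\Sigma^2)$ on which every component of $B$ is already a divisor and every curve $C_j\subset X$ exceptional over $\pp^2$ (and not in $B$) has center at a smooth boundary point $t_i\in B_T$; blowing these up along the boundary gives $p\colon X_0\to T$ with each $p^{-1}(t_i)$ a chain (your tower observation) and the chains pairwise disjoint. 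The cluster type hypothesis then forces the induced map $q\colon X_0\to X$ to be a morphism whose exceptional locus lies in $B_0\cup\bigcup_i p^{-1}(t_i)$: any other curve on $X_0$ is the strict transform of a divisor on $\pp^2$ meeting $\mathbb{G}_m^2$, and $\mathcal{T}(\phi)=0$ says such divisors are never contracted on $X$. For $x\in X\setminus B$ the connected fiber $q^{-1}(x)$ must avoid $B_0$ (else $x\in B$), hence sits inside a single chain $p^{-1}(t_{i_0})$; since $X_0$ is smooth there, the minimal resolution of $(X;x)$ has a chain dual graph. This single construction handles the interior case uniformly, without a separate reduction to Du Val, and is the step that replaces your anticipated ``bookkeeping''.
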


\begin{proof}
Let $\phi\colon (\pp^2,\Sigma^2)\dashrightarrow (X,B)$ be a crepant birational map
of cluster type.
By Lemma~\ref{lem:1-comp-DE}, we know that $X$ has $A$-type singularities along $B$.
Thus, it suffices to show that $X$ only has $A$-type singularities along $X\setminus B$.
Let $B_1,\dots,B_k$ be the components of $B$.
Let $C_1,\dots,C_j$ be the divisors on $X$
exceptional over $\pp^2$ 
that are not contained in $B$.
Let $p_1\colon (T,B_T)\rightarrow (\pp^2,\Sigma^2)$ be a toric crepant birational morphism satisfying the following conditions:
\begin{itemize}
\item the center of each $B_i$ on $T$ is a divisor, and 
\item every divisor $C_i$ can be obtained from $T$ by a sequence of blow ups at points of $B_T$ that are not zero strata.
\end{itemize}
Let $t_1,\dots,t_s$ be the centers of the curves $C_i$ in $T$.
Let $p\colon X_0\rightarrow T$ be a projective birational morphism obtained by consecutively blowing up the points $t_1,\dots,t_s$ such that each $C_i$ is indeed a divisor on $X_0$. 
Let $(X_0,B_0)$ be the crepant pull-back of $(T,B_T)$ to $X_0$.
For each $i\in \{1,\dots,s\}$ the divisor
$p^{-1}(t_i)$ is a chain $E_{i,1}\cup \dots \cup E_{i,j_i}$ of rational curves such that only 
$E_{i,j_i}$ intersects $B_0$.
We have a crepant projective birational morphism
$q\colon (X_0,B_0)\rightarrow (X,B)$.
Observe that, as $\phi$ is of cluster type, 
the projective birational morphism
$q$ can only contract curves on 
$B_0\cup \bigcup_{i=1}^j\left(\bigcup_{k=1}^{j_i}E_{i,k} \right)$.
Let $x\in X\setminus B$ be a singular point
and let $F=q^{-1}(x)$.
If $F$ intersects $B_0$, then $x$ is contained in $B$ and hence $(X;x)$ is an $A$-type singularity.
If $F$ does not intersect $B_0$, then 
$F$ is contained in $\bigcup_{i=1}^j\left(\bigcup_{k=1}^{j_i}E_{i,k} \right)$. 
We conclude that $F$ is contained in a disjoint union of chains of rational curves.
As $F$ is connected, it must be a chain of rational curves itself so $(X;x)$ is an $A$-type singularity.
\end{proof}

\begin{lemma}\label{lem:config-curves-outside}
Let $X$ be a Fano surface and $(X,B)$ be a complement.
Let $\phi\colon (\pp^2,\Sigma^2)\dashrightarrow (X,B)$ be a crepant birational map of cluster type.
Let $C_1,\dots,C_k$ be the curves of $X$
that are not contained on $B$ and are exceptional over $\pp^2$.
Then for $i\neq j$ the curves $C_i$ and $C_j$ can only intersect along $B$.
\end{lemma}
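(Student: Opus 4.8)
The plan is to descend the configuration to the toric model produced by Lemma~\ref{lem:diagram-cluster-type-surf} and to use the ampleness of $-K_X$ to force the relevant exceptional curves to be disjoint away from $B$.

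First I would apply Lemma~\ref{lem:diagram-cluster-type-surf} to the cluster type map $\phi$, obtaining a toric log Calabi--Yau pair $(T,B_T)$, a model $(X_0,B_0)$, and projective birational morphisms $p_2\colon X_0\to X$ and $q\colon X_0\to T$, where $p_2$ extracts only log canonical places of $(X,B)$ and $q$ extracts only canonical places. Since every $p_2$-exceptional divisor is a log canonical place, its center is a log canonical center of $(X,B)$ and hence lies in $B$; thus $p_2$ is an isomorphism over $X\setminus B$. Writing $\tilde C_i$ for the strict transform of $C_i$ on $X_0$, the hypothesis that $C_i$ is exceptional over $\pp^2$ and not contained in $B$ makes $\tilde C_i$ a $q$-exceptional canonical curve that is not a component of $B_0$ and that maps isomorphically onto $C_i$ under $p_2$. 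In fact, since $p_2$ contracts only components of $B_0$, every canonical $q$-exceptional curve on $X_0$ descends to one of the $C_i$.

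The Fano hypothesis enters next. As $K_X+B\sim 0$ and $-K_X$ is ample, the divisor $B\sim -K_X$ is ample, so $B\cdot C_i>0$ and each $C_i$ meets $B$; pulling back, each $\tilde C_i$ meets $B_0$. I would then study the fibers of $q$. Because $q$ extracts only canonical places, no $q$-exceptional curve can have center a torus fixed point of $T$: blowing up a zero-dimensional stratum of $B_T$ produces a log canonical place, which $q$ does not extract. Hence each such center lies on the smooth locus of the toric boundary, and over a smooth point $t$ of $B_T$ the strict transform $B_0$ meets the connected fiber $q^{-1}(t)$ in a single point, lying on a single component. Combined with the fact that every canonical curve in $q^{-1}(t)$ must meet $B_0$, this forces the fiber $q^{-1}(t)$ to be irreducible. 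Therefore distinct curves $\tilde C_i$ and $\tilde C_j$ are contracted to distinct points of $T$ and are disjoint on $X_0$.

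Finally I would descend. If $x\in C_i\cap C_j$ with $i\neq j$ and $x\notin B$, then, because $p_2$ is an isomorphism over $X\setminus B$, the point $x$ lifts to a single point $\tilde x\in X_0$ lying on both $\tilde C_i$ and $\tilde C_j$; this contradicts their disjointness. Hence $C_i\cap C_j\subseteq B$. I expect the main obstacle to be the fiber analysis in the third paragraph, namely establishing that each contracted fiber of $q$ meets the toric boundary in a single component. This is exactly the step that uses both the toric structure of $(T,B_T)$ and the property that $q$ extracts only canonical places, since together they exclude exceptional curves centered at the zero-dimensional strata of $B_T$; the ampleness of $-K_X$ then promotes this single incidence to the irreducibility of the fiber, which is what makes the $\tilde C_i$ mutually disjoint.
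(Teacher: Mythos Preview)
Your overall strategy is close in spirit to the paper's, but there is a genuine gap in the fiber analysis. The claim that no $q$-exceptional curve can have center at a torus fixed point of $T$ is not justified by the reasoning you give: the fact that the \emph{first} blow-up at a zero stratum is a log canonical place does not preclude canonical places with that same center. For example, blow up a torus fixed point to obtain $E_0$ (log discrepancy $0$) and then blow up a generic point of $E_0$; the new exceptional divisor has log discrepancy $1$ with respect to $(T,B_T)$ and its center on $T$ is the zero stratum. Lemma~\ref{lem:diagram-cluster-type-surf} gives no control over these centers beyond their being canonical places, so you cannot exclude this situation. Once the center $t$ is a zero stratum, your step (E) fails (two components of $B_T$ pass through $t$), and one can build examples where $q^{-1}(t)$ is reducible while every component still meets $B_0$; your irreducibility conclusion then collapses.

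The paper avoids this by not invoking Lemma~\ref{lem:diagram-cluster-type-surf} at all. It instead chooses the toric model $T$ by hand, imposing the extra condition that each $C_i$ can be obtained from $T$ by a sequence of blow-ups at points of $B_T$ that are \emph{not} zero strata. This is precisely what forces the centers to lie on the smooth locus of the toric boundary, and it makes the fibers of $X_0\to T$ honest chains by construction. Ampleness of $B$ then shows that all but the last curve in each chain is contracted by $X_0\to X$, so the $C_i$ arise from disjoint chains and can only meet along $B$. Note that the paper's morphism $X_0\to X$ is allowed to extract canonical places (the intermediate curves in each chain), unlike your $p_2$; this extra flexibility on the $X$ side is what buys the control over centers on the $T$ side.
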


\begin{proof}
Let $\phi\colon (\pp^2,\Sigma^2)\dashrightarrow (X,B)$ be a crepant birational map of cluster type.
Let $B_1,\dots,B_\ell$ be the components of $B$.
Let $p_1\colon (T,B_T)\rightarrow (\pp^2,\Sigma^2)$ be a toric crepant birational morphism satisfying the following conditions:
\begin{itemize}
\item the center on $T$ of each $B_i$ is a divisor,
\item every $C_i$ can be obtained from $T$ by a sequence of blow ups at points of $B_T$ that are not zero strata.
\end{itemize}
Let $t_1,\dots,t_s$ be the centers of the curves $C_i$'s in $T$. 
Let $p\colon X_0\rightarrow T$ be a projective birational morphism obtained by consecutively blowing up the points $t_i$'s such that each $C_i$ is a divisor on $X_0$.
We may choose $p$ minimal with this property.
For each $i\in \{1,\dots,s\}$ the divisor 
$p^{-1}(t_i)$ is a chain $E_i:=E_{i,1}\cup\dots\cup E_{i,\ell_i}$ of rational curves.
Note that only $E_{i,\ell_i}$ intersects $B_0$.
We have a crepant birational morphism
$q\colon (X_0,B_0)\rightarrow (X,B)$.
Since $\phi$ is of cluster type, the morphism $q$ only contracts curves in $B_0\bigcup_{i=1}^s\left(\bigcup_{k=i}^{\ell_i}E_{i,j} \right)$.
As we chose $p$ to be minimal, the curves $E_{i,\ell_i}$ is not contracted.
Since $B$ is ample the image of each $E_{i,j}$ on $X$ is either a point or a curve intersecting $B$.
Hence, all the curves $E_{i,1},\dots,E_{i,\ell_i-1}$ must be contracted. Otherwise, their images would not intersect $B$ in $X$, leading to a contradiction of the Fano condition.
As the chains $E_1,\dots,E_s$ are disjoint, we conclude that the curves $\phi(E_{1,j_1}),\dots, \phi(E_{s,\ell_s})$ can only intersect on $X$ along $B$.
By construction, we conclude that the curves
$\phi(E_{1,j_1}),\dots,\phi(E_{s,\ell_s})$ 
coincide with the curves $C_1,\dots,C_j$, up to reordering. This finishes the proof.
\end{proof}

\begin{proof}[Proof of Theorem~\ref{introthm:cluster-Fsurf}]
If $(X,B)$ is not a cluster type pair, then
$\mathcal{C}(X)=\infty$ and we are done.
Hence, from now on, we assume that $(X,B)$ is of cluster type and we aim to show that either
$\mathcal{C}(X,B)=1$ or $\mathcal{C}(X,B)=2$.

Let $X$ be a Fano surface and 
$(X,B)$ be a cluster type log Calabi--Yau pair.
Let $\phi \colon (\pp^2_{\phi},\Sigma^2)\dashrightarrow (X,B)$ be a crepant birational map of cluster type.
Let $C_1,\dots,C_k$ be the curves of $X$ that are not contained in $B$ and are exceptional over $\pp^2_{\phi}$.
If $k=0$, then $\phi_1(\mathbb{G}_m^2)$ divisorially covers $X\setminus B$
and $\mathcal{C}(X,B)=1$.
From now on, we assume that $k\geq 1$.
For each crepant birational map
$\varphi\colon (\pp^2_{\varphi},\Sigma^2)\dashrightarrow (X,B)$ of cluster type, we denote by 
$M(\varphi)$ the number of curves in $\{C_1,\dots,C_k\}$ that are not exceptional over $\pp^2_{\varphi}$.
If there exists such a $\varphi$ with $M(\varphi)=k$, then we are done. 
Indeed, in this case, the almost tori
$\phi(\mathbb{G}_m^2)$ and $\varphi(\mathbb{G}_m^2)$ divisorially cover $X\setminus B$ and hence $\mathcal{C}(X,B)=2$.

Let $\psi\colon (\pp^2_\psi,\Sigma^2)\dashrightarrow (X,B)$ be the crepant birational map of cluster type that maximizes $M(\psi)$.
Assume that $M(\psi)=s<k$. 
We may assume that the curves $C_1,\dots,C_s$
are non-exceptional over $\pp^2_\psi$
and the curves $C_{s+1},\dots,C_k$ are exceptional over $\pp^2_\psi$.
Let $p_1\colon (T,B_T)\rightarrow (\pp^2_\psi,\Sigma^2)$ be a toric projective birational morphism satisfying the following conditions:
\begin{enumerate}
    \item[(i)] every component of $B$ is a divisor on $T$, 
    \item[(ii)] every $\psi^{-1}$-exceptional curve can be obtained from $T$ by blowing up a sequence of points of $B_T$ that are not zero strata, and 
    \item[(iii)] If $t\in T$ is the center of a $\psi^{-1}$-exceptional curve, then $t$ is contained on a prime component $B_i$ of $B_T$ that is the section of a toric fibration 
    $f_t\colon T\rightarrow \pp^1$.
\end{enumerate}
By assumption, the curve $C_{s+1}$ is exceptional over $\pp^2_\psi$ and so it is exceptional over $T$.
Thus $C_{s+1}$ is a $\psi^{-1}$-exceptional curve.
Note that $C_{s+1}$ is not contained in $B$ so condition (ii) applies.
Henceforth, the curve $C_{s+1}$ can be obtained from $T$ by blowing up consecutively a point $t_{s+1}$ of $B_T$ that is not a zero strata.
Furthermore, by condition (iii), we know that there is a prime component $B_{s+1}$ of $B_T$ containing $t_{s+1}$ and a toric fibration $f_{s+1}\colon T\rightarrow \pp^1$ for which $B_{s+1}$ is a section.
Let $p\in \mathbb{G}_m\subset \pp^1$ be the image of $t_{s+1}$ in $\pp^1$ via $f_{s+1}$.
Let $F_{s+1}$ be the fiber of $f_t$ over $t_{s+1}$.
There are two cases: 
either $F_{s+1}$ agrees with one of the curves $C_i$ with $i\in \{1,\dots,s\}$ or it does not.
In the previous sentence, for simplicity, we are identifying the $C_i$'s with their strict transforms in $T$.
In what follows, we arise to a contradiction in either case. 

First, assume that $F_{s+1}=C_1$.
Let $p\colon X_0\rightarrow T$ be a projective birational map obtained by a sequence of blow ups of points of $B_T$ that are not strata such that $q\colon X_0\rightarrow X$ is a morphism.
We may assume that $p$ is minimal with the previous property. Let $x_1,\dots,x_r$ be the points blown up by $p$. We may assume $x_1=t_{s+1}$.
For each $i$ the curve $p^{-1}(x_i)=E_{i,1}\cup \dots E_{i,\ell_i}$ is a chain of rational curves.
By the minimality of $p$, only $E_{i,\ell_i}$ intersects $B_0$.
By the cluster type assumption of $\psi$, we know that ${\rm Ex}(p)$ is contained in $\cup_{i=1}^r p^{-1}(x_i) \cup B_0$.
As $B$ is ample, all the curves 
$E_{1,1},\dots,E_{1,s_1-1}$ must be contracted by $p$.
We conclude that $p(E_{1,s_1})$ and $p(F_{s+1})$ intersect along $X\setminus B$.
However, by construction, we have 
$p(E_{1,s_1})=C_{s+1}$ 
and $p(F_{s+1})=C_1$.
We conclude that $C_1$ and $C_{s+1}$ intersect along $X\setminus B$.
This contradicts Lemma~\ref{lem:config-curves-outside}.

Now, we assume that $F_{s+1}$ is not among the curves $C_1,\dots,C_k$.
We can perform a weighted blow up at $t_{s+1}$ that only extracts $C_{s+1}$.
Let $X_1\rightarrow X_0$ be such a blow up.
Let $(X_1,B_1)$ be the crepant pull-back of $(X_0,B_0)$. Note that we still have a fibration $X_1\rightarrow \pp^1$ and the strict transform of $F_{s+1}$ is degenerate over $\pp^1$.
Thus, there is a projective birational morphism
$X_1\rightarrow X_2$ that only contracts $F_{s+1}$.
Let $(X_2,B_2)$ be the log Calabi--Yau pair 
obtained by pushing-forward $B_1$ to $X_2$.
By construction, the log Calabi--Yau pair $(X_2,B_2)$ is toric
and the curves $C_1,\dots,C_{s+1}$ are divisors on $X_2$ not contained in $B_2$.
As $(X_2,B_2)$ is toric, we have a commutative diagram of crepant birational maps:
\[
\xymatrix{
(X_2,B_2)\ar@{-->}[r]^-{\pi_1} \ar@{-->}[d]_-{\pi_2} & (X,B) \\ 
(\pp^2_\xi, \Sigma^2)\ar@{-->}[ru]_-{\xi}  & \\ 
}
\]
where $\pi_1$ is a toric birational map
and $\xi$ is of cluster type.
As $\pi_1$ is a toric birational map, 
it is an isomorphism on $\mathbb{G}_m^2$.
In particular, the center of the curves $C_1,\dots,C_{s+1}$ are divisors on $\pp^2_\xi$.
We conclude that $M(\xi)=s+1>M(\psi)$.
This contradicts the minimality of $\psi$.

We conclude that $M(\phi)=k$ and so $\mathcal{C}(X,B)=2$. This finishes the proof.
\end{proof}

\section{Examples and questions}

In this section, we collect some examples and questions. 
We start with some examples of log Calabi--Yau pairs that exhibit some pathological behavior with respect to the torus exceptional degree.

\begin{example}\label{ex:not-cluster}
{\em 
In this example, we show a log Calabi--Yau surface $(X,B)$ of index one and birational complexity zero 
that is not of cluster type.
Let $(\pp^1\times \pp^1, Q+F)$
where $Q$ is a general curve of multi-degree $(2,1)$
and $F$ is a fiber of the projection onto the first component.
Then, the pair $(\pp^1\times \pp^1,Q+F)$ is a log Calabi--Yau pair.
Let $F'$ be a fiber that is tangent to $Q$ at a single point $x$.
Let $p$ be the image on $\pp^1$ of $x$.
We construct a new log Calabi--Yau pair following the next steps:
\begin{enumerate}
    \item we blow up $X$ at $x$,
    \item we blow up the intersection of the two components of the fiber over $p$, 
    \item we recursively blow up $n$ times the intersection of the strict transform of $Q$ and the fiber over $p$, and 
    \item we blow-down all the $(-2)$-curves contained in the fiber over $p$.
\end{enumerate}
By doing so, we obtain a new log Calabi--Yau pair $(X_n,B_n)$ of index one with a Mori fiber space to $\pp^1$.
Since $(\pp^1\times \pp^1,Q+F)\simeq_{\rm bir} (\pp^2,\Sigma^2)$, we have that $(X_n,B_n)$ has birational complexity zero.
Furthermore, the surface $X_n$ has a $D_n$ singularity.
By Proposition~\ref{prop:only-A-type}, we conclude that $(X_n,B_n)$ is not of cluster type.
}
\end{example}

\begin{example}\label{ex:total-ted-unbounded}
{\em  
In this example, we show that there is a sequence of log Calabi--Yau surfaces $(X_n,B_n)$ of index one and birational complexity zero for which 
the sequence $\mathcal{T}_t(X_n,B_n)$ diverges to infinite.

Let $(X,B)$ be a log Calabi--Yau pair of index one, birational complexity zero, that is not of cluster type.
For instance, we can consider the log Calabi--Yau surface constructed in Example~\ref{ex:not-cluster}.
Let $p$ be a point on $B$ that is not a zero-dimensional strata. 
Let $(X_n,B_n)$ be the log Calabi--Yau pair obtained by recursively blowing up $p$, and then blowing up $n$ times the intersection of the strict transform of $B$ with the previously introduced $(-1)$-curve.
We let $E_1\cup \dots E_{n+1}$ be the chain of rational curves that is the exceptional divisor 
of $X_n\rightarrow X$.
Assume by the sake of contradiction that $\mathcal{T}_t(X_n,B_n)\leq k$ for some constant $k$.
For each crepant birational map 
$\phi\colon (\pp^2,\Sigma^2)\dashrightarrow (X_n,B_n)$, we have $\mathcal{T}(\phi)\geq 1$.
Thus, we conclude that for each $n$ the log Calabi--Yau pair $(X_n,B_n)$ is divisorially covered by $\{\phi_{n,i}\}_{i\in I_n}$ with $|I_n|\leq k$.
For each $n$ and $i$, we let 
\[
|\phi_{n,i}|=
| \{ j\in \{1,\dots,n+1\} \mid 
\phi_{n,i}(\mathbb{G}_m^2 \setminus
{\rm Ex}(\phi_{n,i}) \cap E_j \neq \emptyset\}|.
\]
Observe that 
\[
\sum_{i\in I_n} |\phi_{n,i}| \geq n+1.
\]
Since $|I_n|\leq k$, for each $n$, there is one of the $\phi_{n,i}$'s for which $|\phi_{n,i}|\geq \lfloor \frac{n+1}{k}\rfloor$.
Let's assume that it is $\phi_{n,1}$.
Set $V_n:=\phi_{n,1}(\mathbb{G}_m^2 \setminus {\rm Ex}(\phi_{n,1}))$. 
Let $X_n\rightarrow X'_n$ be the projective birational contraction over $X$ that contracts all the curves $E_j$ that intersect
$V_n$ trivially.
Let $B'_n$ be the push-forward of $B_n$ on $X'_n$.
Thus, we get a commutative diagram
\[
\xymatrix{ 
 & (X_n,B_n) \ar[d]^-{\pi_n} \\ 
 & (X'_n, B'_n) \ar[d]^-{\pi'_n} \\
(\pp^2,\Sigma^2)\ar@{-->}[r] \ar@{-->}[ru]_-{\phi_{n,1}}
\ar@{-->}[ruu]^-{\phi_{n,1}}& (X,B). 
}
\]
Let $C_{n,1},\dots,C_{n,j_n}$ be closure on $X'_n$ of the prime components of 
\[
(X'_n\setminus B'_n) \setminus V_n.
\]
Since $\rho(X)=2$, we get $|j_n|\geq \lfloor \frac{n+1}{k} \rfloor-1$.
By Lemma~\ref{lem:affine-bounded-pi_1}, since $\mathcal{T}(\phi_{n,1})\leq k$, we conclude that 
\[
\dim H_1(V_n,\qq)\leq f(k)
\]
for certain constant $f(k)$ that only depends on $k$.
Since all the exceptional divisors of $\pi'_n$ are mapped to a smooth point in $X$, we get
\[
\dim H_1(V_n\setminus {\rm Ex}(\pi'_n),\qq) 
- \dim H_1(V_n,\qq) \leq 1. 
\]
In particular, we have 
\[
\dim H_1(V_n\setminus {\rm Ex}(\pi'_n),\qq)\leq f(k)+1.
\]
Let $U_n$ be the image on $X$ of $V_n\setminus {\rm Ex}(\pi'_n)$. Then, as observed above 
we have $\dim H_1(U_n,\qq)\leq f(k)+1$.
Note that
\[
U_n=X\setminus \left( B \cup_{j=1}^{j_n}
{\pi'_n}_*C_{n,j}\right).
\]
The sequence of divisors $\{{\pi'_n}_*C_{n,j}\}$ is infinite, as $j_n$ diverges to infinite. Thus, we get a contradiction of Lemma~\ref{lem:rank-inf} as $\dim H_1(U_n,\qq)$ is bounded above.
}
\end{example}

In Theorem~\ref{introthm:bcomp-zero-surf-aff-bounded}, we show that there exists a bounded family of affine surfaces $\mathcal{A}\rightarrow T$ such that for every log Calabi--Yau surface $(X,B)$ of index one and birational complexity zero
there is an embedding $\mathcal{A}_t\hookrightarrow X\setminus B$ for a suitable $t\in T$. The previous example shows that, in general, there is no upper bound $k$ such that every such $X\setminus B$ can be divisorially covered by at most $k$ open affines of the form $\mathcal{A}_t$. 
One may wonder if the log pair $(X,B)$ can be divisorially covered by an arbitrary number of open affines of the form $\mathcal{A}_t$. This motivates the following definition.

\begin{definition}
{\em  
Let $(X,B)$ be a log Calabi--Yau pair
of index one and birational complexity zero.
We define the {\em covering torus exceptional degree} of $(X,B)$, denoted by $\mathcal{T}_c(X,B)$, to be 
\[
\min \left\{ \max_{i \in I} \mathcal{T}(\phi_i) \mid  
\{\phi_i\}_{i\in I} \text{ 
is a set of crepant birational maps divisorially covering $(X,B)$}
\right\}.
\]
}
\end{definition}

From the definitions, it is clear that a log Calabi--Yau pair $(X,B)$ satisfies:
\[
\mathcal{T}_t(X,B) \geq
\mathcal{T}_c(X,B) \geq 
\mathcal{T}(X,B).
\]
While we expect $\mathcal{T}(X,B)$ to be bounded above in terms of the dimension (see Conjecture~\ref{conj:upper-bound-ted}), we know that $\mathcal{T}_t(X,B)$ can be arbitrarily large even for surfaces (see Example~\ref{ex:total-ted-unbounded}).
This leads to the following question.

\begin{question}
{\em 
Let $(X,B)$ be a log Calabi--Yau pair of dimension $n$ and birational complexity zero.
Is $\mathcal{T}_c(X,B)$ bounded above in terms of the dimension?
}   
\end{question}

Now, we turn to discuss some examples of bounded, affinely bounded, and birationally bounded families.

\begin{example}\label{ex:affine-bounded-not-bounded}
{\em
In this example, we show a family of varieties that is affinely bounded but is not bounded.
Let $n\geq 2$.
The set $\mathcal{T}_n$ of
$n$-dimensional projective toric varieties 
is affinely bounded.
Indeed, for every $n$-dimensional projective toric variety $X$ we have an open embedding
$\mathbb{G}_m^n \hookrightarrow X$.
On the other hand, the family $\mathcal{T}_n$ is not bounded as we can find an infinite sequence of smooth projective toric varieties $X_r$
with $\rho(X_r)=r$.
}
\end{example}

\begin{example}\label{ex:bir-bound-not-aff-bound}
{\em  
In this example, we show a set of varieties that is birationally bounded but is not 
affinely bounded. Let $\mathcal{R}_2$ be the set of all projective rational surfaces.
The set $\mathcal{R}_2$ is birationally bounded.
We show that is not affinely bounded.
Consider $\pi_1\colon \pp^1 \times \pp^1 \rightarrow \pp^1$ the projection to the first component.
Let $\{p_i\}_{i\in \nn}$ be an infinite sequence of points in $\pp^1$.
Let $X_k$ be the projective surface obtained as follows.
We blow up $\pp^1\times \pp^1$ at $q_i\in \pi_1^{-1}(p_i)$ for each $i\in \{1, \dots, k\}$ 
and then over each $p_i$ we blow up the intersection of the $(-1)$-curve and the strict transform of the fiber. 
Finally, we obtain $X_k$ by contracting all the $(-2)$-curves contained in the fibers of the fibration to $\pp^1$.
By doing so, we obtain a projective surface $X_k$ with $2k$ singularities of type $A_1$.
Furthermore, $X_k$ admits a Mori fiber space onto $\pp^1$ with precisely $2k$ fibers of multiplicity $2$.
By~\cite[Lemma 3.13]{FM23}, we conclude that there is a surjective homomorphism
\[
\pi_1^{\rm reg}(X_k) \rightarrow 
\pi_1\left( 
\pp^1, \frac{1}{2}(p_1+\dots+p_k) 
\right) \simeq 
\langle \gamma_1, \dots, \gamma_k 
\mid \gamma_1\cdots\gamma_k, \gamma_1^2,\dots, \gamma_k^2 \rangle. 
\]
Then the rank of $\pi_1^{\rm reg}(X_k)$ diverges as $k$ goes to infinite. 
Thus, the sequence of varieties $\{X_k\}_{k\in \nn}$ is birationally bounded
but it is not affinely bounded.
}
\end{example}

Finally, we explain an example of a log Calabi--Yau surface of large cluster type.

\begin{example}\label{ex:large-cluster-type}
{\em 
In this example, we show that for every integer $n\geq 0$ there exists a log Calabi--Yau pair
$(X,B)$ of dimension two with $\mathcal{C}(X,B)=n+1$.

Consider the log Calabi--Yau surface
$(X_n,B_n)$ obtained as follows. 
Start from $(\pp^1\times \pp^1,B)$ where
$B$ is the toric boundary.
Pick a point $p\in B$ which is not a zero stratum.
Let $B_1$ be the unique prime component of $B$
containing $p$.
Blow up $p$ and then recursively blow up $(n-1)$ times the intersection of the strict transform of $B$
with the previously extracted $(-1)$-curve.
By doing so, we get a log Calabi--Yau pair
$(X_n,B_n)$ that admits a fibration $f\colon X_n \rightarrow \pp^1$.
Furthermore, the fibers $f^{-1}(0)$ and $f^{-1}(\infty)$ are contained in the support of $B_n$.
All fibers of $f$ are isomorphic to $\pp^1$, 
except a single fiber, which is a chain of rational curves all of which are $(-2)$-curves except for $(-1)$-curves at either end.
We may assume that this unique reducible fiber is $f^{-1}(1)$ and write
$f^{-1}(1)=E_0 \cup E_1 \cup \dots \cup E_n$.
Let $\phi\colon (\pp^2,\Sigma^2)\dashrightarrow (X_n,B_n)$ be a crepant birational map of cluster type.
We apply Lemma~\ref{lem:diagram-cluster-type-surf}
and obtain a commutative diagram of projective birational maps
\[
\xymatrix{
(T_n,B_{T_n})\ar[d]^-{p_1} & (X_{n,0},B_{n,0})
\ar[l]_-{q}\ar[d]^-{p_2} \\
(\pp^2,\Sigma^2) & (X_n,B_n)\ar@{-->}[l]_-{\phi^{-1}}
}
\]
where the following conditions are satisfied:
\begin{itemize}
\item $p_1$ is a toric projective birational morphism,
\item $p_2$ is a projective birational morphism that only extracts log canonical centers of $(X,B)$, and 
\item $q$ is a projective birational morphism that only extracts canonical centers of $(T_n,B_{T_n})$.
\end{itemize}
By abuse of notation, we write $E_i$ for the strict transform of $E_i$ on $X_{n,0}$.
Note that $p_2$ only extracts log canonical centers of $(X_n,B_n)$. Let $f_0\colon X_{n,0}\rightarrow \pp^1$ be the induced fibration on $X_{n,0}$.
The fibers of $f_0$ are either rational curves intersecting $B_{n,0}$ twice, are contained in $B_{n,0}$, or are equal to $E_0\cup\dots\cup E_{n}$.
Furthermore, every curve in $X_{n,0}$ which is horizontal over $\pp^1$ is either contained in $B_{n,0}$ or intersects $B_{n,0}$ at least twice.
Indeed, every curve on $X_{n,0}$ which is horizontal over $\pp^1$ must intersect $f_{n,0}^{-1}(0)$ and $f_{n,0}^{-1}(\infty)$ and both fibers are contained in $B_{n,0}$.
By Lemma~\ref{lem:contraction-S^1}, we conclude that all the exceptional divisors of $q$ are vertical over $\pp^1$.
Since every fiber $f_0^{-1}(t)$, with $t\in \mathbb{G}_m$ and $t\neq 1$, intersects $B_{n,0}$ twice, we conclude that every exceptional divisor of $q$ must be contained in $E_0\cup \dots \cup E_{n}$.
Since $(T,B_T)$ is toric, by a complexity computation, we conclude that $q$ must contract $n$ of the curves $E_0,\dots, E_{n}$.
Thus, for every crepant birational map $\phi \colon (\pp^2,\Sigma^2) \dashrightarrow (X_n,B_n)$
of cluster type, the image $\phi(\mathbb{G}_m^2)$ can intersect at most one of the curves $E_0\cup\dots \cup E_{n}$.
Hence, we conclude that at least $n+1$ almost tori are required to cover $X_n\setminus B_n$ divisorially.
This means that $\mathcal{C}(X_n,B_n)\geq n+1$.
On the other hand, by contracting precisely $n$ of the curves $E_0,\dots,E_n$, we observe that $n+1$ almost tori cover $X_n\setminus B_n$ divisorially.
Thus, we get $\mathcal{C}(X_n,B_n)=n+1$.
}
\end{example}

The previous example shows that the cluster type of
a log Calabi--Yau surface can be an arbitrary positive integer.
In contrast, 
Theorem~\ref{introthm:cluster-Fsurf} implies that $\mathcal{C}(X,B)\leq 2$ for a cluster type log Calabi--Yau pair $(X,B)$ whenever $X$ is a Fano surface.
It is natural to expect similar behavior in higher dimensions.
This motivates the following question.

\begin{question}
{\em 
Let $X$ be a $n$-dimensional Fano variety.
Let $(X,B)$ be a log Calabi--Yau pair of cluster type.
Can we bound $\mathcal{C}(X,B)$ above in terms of $n$?
}
\end{question}

\bibliographystyle{habbvr}
\bibliography{references}

\begin{thebibliography}{10}
\expandafter\ifx\csname url\endcsname\relax
  \def\url#1{\texttt{#1}}\fi
\expandafter\ifx\csname doi\endcsname\relax
  \def\doi#1{\burlalt{doi:#1}{http://dx.doi.org/#1}}\fi
\expandafter\ifx\csname urlprefix\endcsname\relax\def\urlprefix{URL }\fi
\expandafter\ifx\csname href\endcsname\relax
  \def\href#1#2{#2}\fi
\expandafter\ifx\csname burlalt\endcsname\relax
  \def\burlalt#1#2{\href{#2}{#1}}\fi

\bibitem{BBD84}
D.~Bindschadler, L.~Brenton, and D.~Drucker.
\newblock Rational mappings of del {P}ezzo surfaces, and singular compactifications of two-dimensional affine varieties.
\newblock {\em Tohoku Math. J. (2)}, 36(4):591--609, 1984.
\newblock \doi{10.2748/tmj/1178228765}.

\bibitem{BMSZ18}
M.~V. Brown, J.~McKernan, R.~Svaldi, and H.~R. Zong.
\newblock A geometric characterization of toric varieties.
\newblock {\em Duke Math. J.}, 167(5):923--968, 2018.
\newblock \doi{10.1215/00127094-2017-0047}.

\bibitem{Dim92}
A.~Dimca.
\newblock {\em Singularities and topology of hypersurfaces}.
\newblock Universitext. Springer-Verlag, New York, 1992.
\newblock \doi{10.1007/978-1-4612-4404-2}.

\bibitem{Duc22}
T.~Ducat.
\newblock Quartic surfaces up to volume preserving equivalence.
\newblock {\em Selecta Math. (N.S.)}, 30(1):Paper No. 2, 27, 2024.
\newblock \doi{10.1007/s00029-023-00890-7}.

\bibitem{FM23}
F.~Figueroa and J.~Moraga.
\newblock Fundamental groups of low-dimensional lc singularities, 2023, \burlalt{arXiv:2302.11790}{http://arxiv.org/abs/arXiv:2302.11790}.

\bibitem{FMM22}
S.~Filipazzi, M.~Mauri, and J.~Moraga.
\newblock Index of coregularity zero log {C}alabi-{Y}au pairs, 2022, \burlalt{arXiv:2209.02925}{http://arxiv.org/abs/arXiv:2209.02925}.

\bibitem{FS20}
S.~Filipazzi and R.~Svaldi.
\newblock On the connectedness principle and dual complexes for generalized pairs.
\newblock {\em Forum Math. Sigma}, 11:Paper No. e33, 2023.
\newblock \doi{10.1017/fms.2023.25}.

\bibitem{FG14}
O.~Fujino and Y.~Gongyo.
\newblock Log pluricanonical representations and the abundance conjecture.
\newblock {\em Compos. Math.}, 150(4):593--620, 2014.
\newblock \doi{10.1112/S0010437X13007495}.

\bibitem{GLM23}
C.~Gachet, Z.~Liu, and J.~Moraga.
\newblock Fundamental groups of log {C}alabi-{Y}au surfaces, 2023, \burlalt{2312.03981}{http://arxiv.org/abs/2312.03981}.

\bibitem{GHK15}
M.~Gross, P.~Hacking, and S.~Keel.
\newblock Mirror symmetry for log {C}alabi-{Y}au surfaces {I}.
\newblock {\em Publ. Math. Inst. Hautes \'{E}tudes Sci.}, 122:65--168, 2015.
\newblock \doi{10.1007/s10240-015-0073-1}.

\bibitem{Kal20}
A.-S. Kaloghiros.
\newblock Some examples of {C}alabi-{Y}au pairs with maximal intersection and no toric model.
\newblock In {\em Birational geometry and moduli spaces}, volume~39 of {\em Springer INdAM Ser.}, pages 57--75. Springer, Cham, [2020] \copyright 2020.
\newblock \doi{10.1007/978-3-030-37114-2\_5}.

\bibitem{KK10}
J.~Koll\'{a}r and S.~J. Kov\'{a}cs.
\newblock Log canonical singularities are {D}u {B}ois.
\newblock {\em J. Amer. Math. Soc.}, 23(3):791--813, 2010.
\newblock \doi{10.1090/S0894-0347-10-00663-6}.

\bibitem{KM98}
J.~Koll\'{a}r and S.~Mori.
\newblock {\em Birational {G}eometry of {A}lgebraic {v}arieties}, volume 134 of {\em Cambridge Tracts in Mathematics}.
\newblock Cambridge University Press, Cambridge, 1998.
\newblock \doi{10.1017/CBO9780511662560}.
\newblock With the collaboration of C. H. Clemens and A. Corti, Translated from the 1998 Japanese original.

\bibitem{KX16}
J.~Koll\'{a}r and C.~Xu.
\newblock The dual complex of {C}alabi-{Y}au pairs.
\newblock {\em Invent. Math.}, 205(3):527--557, 2016.
\newblock \doi{10.1007/s00222-015-0640-6}.

\bibitem{MM24}
M.~Mauri and J.~Moraga.
\newblock Birational complexity and dual complexes, 2024, \burlalt{arXiv:2402.10136}{http://arxiv.org/abs/arXiv:2402.10136}.

\bibitem{Mor22}
J.~Moraga.
\newblock Coregularity of {F}ano varieties.
\newblock {\em Geom. Dedicata}, 218(2):Paper No. 40, 55, 2024.
\newblock \doi{10.1007/s10711-023-00882-z}.

\bibitem{Tev07}
J.~Tevelev.
\newblock Compactifications of subvarieties of tori.
\newblock {\em Amer. J. Math.}, 129(4):1087--1104, 2007.
\newblock \doi{10.1353/ajm.2007.0029}.

\bibitem{Ver76}
J.-L. Verdier.
\newblock Stratifications de {W}hitney et th\'{e}or\`eme de {B}ertini-{S}ard.
\newblock {\em Invent. Math.}, 36:295--312, 1976.
\newblock \doi{10.1007/BF01390015}.

\bibitem{Ye02}
Q.~Ye.
\newblock On {G}orenstein log del {P}ezzo surfaces.
\newblock {\em Japan. J. Math. (N.S.)}, 28(1):87--136, 2002.
\newblock \doi{10.4099/math1924.28.87}.

\end{thebibliography}

\end{document}